\numberwithin{equation}{section}
\numberwithin{figure}{section}
\newtheorem*{rep@theorem}{\rep@title}
\newcommand{\newreptheorem}[2]{
\newenvironment{rep#1}[1]{
\def\rep@title{#2 \ref{##1}}
\begin{rep@theorem}}
{\end{rep@theorem}}}
\newtheorem{theorem}{Theorem}[section]
\newtheorem{lemma}[theorem]{Lemma}
\newtheorem{proposition}[theorem]{Proposition}
\newtheorem{corollary}[theorem]{Corollary}
\newtheorem{remark}[theorem]{Remark}
\theoremstyle{definition}
\newtheorem{definition}[theorem]{Definition}
\newtheorem{example}[theorem]{Example}
\newcommand{\C}{{\mathbb{C}}}
\newcommand{\Z}{{\mathbb{Z}}}
\newcommand{\R}{{\mathbb{R}}}
\renewcommand{\P}{{\mathbb{P}}}
\newcommand{\calX}{\mathcal{X}}
\newcommand{\id}{\mathrm{id}}
\newcommand{\into}{\hookrightarrow}
\newcommand{\dual}{\star}
\newcommand{\DG}{\mathrm{DG}}
\newcommand{\CP}{\C\mathrm{P}}
\DeclareMathOperator{\coker}{coker}
\DeclareMathOperator{\modulo}{mod}
\DeclareMathOperator{\Stab}{Stab}
\DeclareMathOperator{\Hom}{Hom}
\DeclareMathOperator{\Top}{\mathsf{Top}}
\DeclareMathOperator{\Tor}{Tor}
\DeclareMathOperator{\im}{im}
\DeclareMathOperator{\Diff}{\mathsf{Diff}}
\definecolor{gold}{rgb}{0.85,.66,0}
\definecolor{cherry}{rgb}{0.9,.1,.2}
\definecolor{burgundy}{rgb}{0.8,.2,.2}
\definecolor{orangered}{rgb}{0.85,.3,0}
\definecolor{orange}{rgb}{0.85,.4,0}
\definecolor{olive}{rgb}{.45,.4,0}
\definecolor{lime}{rgb}{.6,.9,0}
\definecolor{green}{rgb}{.2,.7,0}
\definecolor{grey}{rgb}{.4,.4,.2}
\definecolor{brown}{rgb}{.4,.3,.1}
\newcommand{\und}{\underline}
\begin{document}

\title{Global quotients among toric Deligne-Mumford stacks}

\author{Megumi Harada}
\address{Department of Mathematics and
Statistics\\ McMaster University\\ 1280 Main Street West\\ Hamilton, Ontario L8S4K1\\ Canada}
\email{Megumi.Harada@math.mcmaster.ca}
\urladdr{\url{http://www.math.mcmaster.ca/Megumi.Harada/}}
\thanks{MH is partially supported by an NSERC Discovery Grant,
an NSERC University Faculty Award, and an Ontario Ministry of Research
and Innovation Early Researcher Award.}

\author{Derek Krepski}
\address{Department of Mathematics \\ 
University of Western Ontario\\ 
Middlesex College \\
London, Ontario N6A 5B7\\ Canada}
\email{dkrepski@uwo.ca}

\keywords{toric orbifold, toric Deligne-Mumford stack, symplectic orbifold, stacky fan} 
\subjclass[2010]{Primary: 57R18, 53D20; Secondary: 14M25, 14D23}

\date{\today}



\begin{abstract}

This work characterizes global quotient stacks---smooth stacks associated to a finite group acting a manifold---among smooth quotient stacks $[M/G]$, where $M$ is a smooth manifold equipped with a smooth proper action by a Lie group $G$.  The characterization is described in terms of the action of the connected component $G_0$ on $M$ and is related to (stacky) fundamental group and covering theory.  This characterization is then applied to smooth toric Deligne-Mumford stacks, and global quotients among toric DM stacks are then characterized in terms of their associated combinatorial data of stacky fans.  

%
%

\end{abstract}

\maketitle


\section{Introduction}

This note has two parts: first, we consider topological properties of smooth orbifolds that
describe obstructions to being a \emph{global (resp. discrete)
  quotient}\footnote{In the literature, global quotients are also called \emph{good} or
\emph{developable} orbifolds.} ---i.e. equivalent (in a sense made more precise below) to an orbifold associated 
to a finite (resp. discrete) group action on a smooth manifold, and second, we apply our general results on global quotients
to toric Deligne-Mumford stacks. 
Recall that an orbifold structure,
considered from the classical viewpoint (originating in the work of
Satake \cite{Satake:1956}), can be described via local charts, each of
which are quotients $U/\Gamma$ of a linear action of a finite group
$\Gamma$ on an open subset $U$ of Euclidean space. 
The point is that for a general orbifold, these local charts do not necessarily `patch together'
into a global finite group action on a smooth manifold, but for a
global quotient, they do. 

It is worth emphasizing the following few points at the outset. 
Firstly, in this paper, by a `smooth (resp. topological) orbifold' we
  mean a \emph{smooth (resp.  topological) stack}, or more precisely,
  a \emph{stack in the category $\mathsf{Diff}$
    (resp. $\mathsf{Top}$)}.  (In our exposition we have attempted to
  make the language of stacks accessible to a non-expert audience
 (more on this below), although in no way do we aim to be a textbook.
 We suggest \cite{BCEFFGK-stacks, Lerman:2010,
    Met03} for more about stacks from a topologist's point of view; a
  complete beginner may wish to start with \cite{Edidin:2003} or
  \cite{Fantechi:2001}.)  In particular, the notion of `equivalence'
  in the previous paragraph is a (weak) equivalence of the underlying
  categories of the stacks.  It is important to note that such an
  equivalence is more restrictive than a homeomorphism of underlying
  topological spaces; this is because such a homeomorphism does not
  retain any `stacky' information relating to the orbifold
  singularities.  A basic class of examples illustrating this
  distinction are the so-called \emph{weighted projective stacks},
  i.e. 
  $\P(a_0, \ldots, a_n) = [(\C^{n+1}\setminus \{0 \})/\C^*]$, where
  $\C^*$ acts diagonally on $\C^{n+1} \setminus \{0 \}$ with weights
  $a_0, \ldots, a_n \in \Z_{+}$. (Here we follow the convention in the
  literature and denote by $[X/G]$ the \emph{stack} associated to a
  $G$-action on a space $X$; the topological quotient (orbit) \emph{space} is
  denoted $X/G$.) Although the underlying orbit space
  $(\C^{n+1}\setminus \{0 \})/\C^*$ is homeomorphic to the orbit space
  of a finite group action on complex projective space $\C P^n$ (see
  \cite{Kaw73} for details), the stack $\P(a_0, \ldots, a_n) $ is not
  equivalent \emph{as a stack} to a global quotient (except in the
  trivial case when all of the weights are equal to $1$ and $\P(1,
  \ldots, 1) = \C P^n$ is a smooth manifold).  This follows from
  Theorem \ref{thm:TFAE} below, but is also well-known---see
  e.g. \cite{AdemLeidaRuan:2007}.

Secondly, we emphasize that we restrict ourselves throughout this paper to orbifolds arising as quotient stacks
$[X/G]$, where $G$ is a Lie group acting smoothly and properly on a
smooth manifold $X$. It is worth noting that 
all \emph{reduced}, or \emph{effective} orbifolds -- 
orbifolds whose local isotropy groups act effectively -- are known to be
of this type by a frame-bundle construction (see, for example,
\cite{AdemLeidaRuan:2007}), so this is not a very restrictive
condition in practice. 
Moreover, as we already mentioned, the main application we have in mind of our 
Theorem~\ref{thm:TFAE} is to the beautiful class of
quotient stacks 
known as toric Deligne-Mumford stacks, which are stack analogues of
smooth toric varieties.  

Thirdly, we recall that in his foundational work \cite{Noohi:2005}, Noohi deals with topological stacks and the theory of (stacky) fundamental groups and coverings in a very general framework. We owe much to \cite{Noohi:2005} for both the mathematical content and the exposition in Sections~\ref{sec:preliminaries},~\ref{section:pi1 of quotient stacks}, and ~\ref{section:quotients and covers}. Indeed, our Theorem~\ref{thm:TFAE}, quoted below, can be viewed as an extension of \cite[Theorem 18.24]{Noohi:2005} in the special case of quotient stacks. By restricting to quotient stacks, we are able to connect aspects of stacky algebraic topology with a distinctly geometric (and classical) condition on a Lie group acting on a manifold; in particular, our proofs are our own.  In our exposition we have attempted to explicitly preserve the classical perspective and language as much as possible; we hope this serves to illustrate to a broad audience the appeal of the stack perspective, and to further elucidate the insights of \cite{Noohi:2005} in a setting that is common in differential geometry, namely Lie groups acting on manifolds.

With these points in mind we now state our main general result on stacks arising as global quotients (Theorem
\ref{thm:TFAE} in Section~\ref{section:quotients and covers}). 
We refer to Section \ref{subsec:fundamental group}  for the definition of the inertia homomorphism in (\ref{item:inertia}), and Section \ref{section:quotients and covers} for a discussion of (stacky) covering projections appearing in (\ref{item:cover}).  For now, 
the reader may keep in mind that the \emph{inertia groups} $I_x$ mentioned below can be identified with 
isotropy groups $\Stab(p) \subset G$ of certain points $p\in X$.   

\begin{reptheorem}{thm:TFAE} 
Let $X$ be a simply connected manifold, equipped with a smooth proper action of a Lie group $G$.  Let $G_0\subset G$ denote the connected component of the identity element of $G$, and $I_x$ the inertia group of $x \in [X/G]$.  The following statements are equivalent.
\begin{enumerate}
\item $[X/G]$ is equivalent to a discrete quotient. \label{item:quotient}
\item $G_0$ acts freely on $X$. \label{item:acts freely}
\item For all $x$ in $X$, the inertia homomorphism $\omega_x:I_x \to \pi_1([X/G],x)$ is injective. \label{item:inertia}
\item The (stacky) universal cover of $[X/G]$ is equivalent to a smooth manifold. \label{item:cover}
\end{enumerate}
\end{reptheorem}

Though conditions (\ref{item:quotient}), (\ref{item:inertia}), and (\ref{item:cover}) are known to be equivalent by \cite[Theorem 18.24]{Noohi:2005}, we provide a different proof of these equivalencies for the case of quotient stacks by showing each of these conditions is in turn equivalent to (\ref{item:acts freely}). 
The distinctly geometric appeal of condition (\ref{item:acts freely}),
in comparison to the topological nature of conditions
(\ref{item:inertia}) and (\ref{item:cover}), also carries over in our
interpretation of this result in the setting of smooth toric DM stacks.

We now take a moment to briefly recall the context of our discussion of \emph{toric Deligne-Mumford (DM) stacks} 
in Section~\ref{sec:toricDMstacks}. 
In their foundational paper \cite{BCS05},
Borisov, Chen, and Smith introduce the notion of a \emph{stacky fan},
the combinatorial data from which one constructs a toric
  Deligne-Mumford (DM) stack using an anaolgue of the Cox quotient
construction in algebraic geometry.  
In \cite{FantechiMannNironi:2010}, an abstract
definition of a smooth toric DM stack was given, which was shown to be
compatible with the construction of Borisov, Chen, and Smith \cite{BCS05}
(see also \cite{Iwanari:2009b},
\cite{GeraschenkoSatriano:2011a,GeraschenkoSatriano:2011b} for related
approaches). 
From the symplectic geometry perspective, Lerman and Malkin
\cite{LermanMalkin2009} gave a definition of symplectic toric DM
stacks (in the smooth category $\mathsf{Diff}$), offering a modern perspective on symplectic toric orbifolds.
In fact, parallel to the classical theory
of toric varieties, there is a subfamily of toric DM stacks---namely
those toric DM stacks whose underlying fan is polytopal---that admit
a construction from symplectic geometry via \emph{stacky
  polytopes}, using an adaptation of the construction of Borisov,
Chen, and Smith (see \cite{Sakai2010}). In particular, from this
construction, this subfamily 
can be seen to give examples of symplectic toric DM stacks. 
(In earlier work, Lerman and Tolman extended the Delzant
classification of (compact) symplectic toric manifolds to symplectic
toric orbifolds; in the orbifold case, the classification is by `labelled
polytopes' --- i.e. polytopes with positive integer labels attached to each
facet \cite{LT97}.) 
In this manuscript, 
we interpret our analysis of
orbifolds as global quotients in this class of examples; our results 
are explicit and combinatorial, stated in terms of the stacky fan/polytope.

In order to state our main results for smooth toric DM stacks, recall
that a stacky fan is a triple $(N,\Sigma,\beta)$ consisting of a
finitely generated $\Z$-module $N$, a simplicial fan $\Sigma$ in
$N\otimes \R$ with $n$ rays $\rho_1, \ldots, \rho_n$, and a
homomorphism $\beta:\Z^n \to N$ satisfying certain conditions (see
Definition \ref{definition:stacky fan}). By interpreting Theorem
\ref{thm:TFAE} in this case, we can characterize global quotients
among toric DM stacks in terms of their underlying stacky fans. Let
$N'$ denote the image of $\beta$, and for a cone $\sigma$ in $\Sigma$,
let $N_\sigma$ denote $ \mathrm{span} \{ \beta(e_i) \, | \, \rho_i
\text{ is a ray in } \sigma \}$, where $e_i$ denotes the $i$-th
standard basis vector in $\Z^n$. In this context, condition
(\ref{item:acts freely}) of Theorem \ref{thm:TFAE} results in the
following 
Corollary, which characterizes the stacky fans yielding global
quotients. 
We note that the equivalence of 
condition
(\ref{item:acts freely}) of Theorem \ref{thm:TFAE} 
and the combinatorial condition stated in Corollary~\ref{cor:globalquotient}, in the context of toric DM stacks, 
was first proved via a different method -- a
combinatorial 
analysis of the local isotropy groups -- in joint work of the authors
with Goldin and Johanssen; this approach is described in 
\cite{GHJK:2011}.
We also note that in \cite{PoddarSarkar:2010}, the authors study
quasi-toric orbifolds using techniques from toric topology, and obtain
similar results describing universal covers of quasi-toric orbifolds in that framework.

\begin{repcorollary}{cor:globalquotient}
  Let $(N,\Sigma,\beta)$ be a stacky fan, and let $\mathcal{X}$ denote
  the corresponding toric DM stack. Then $\mathcal{X}$ is equivalent
  to a global quotient if and only if $N'=N_\sigma$ for all maximal
  cones $\sigma$ in $\Sigma$.
\end{repcorollary}

In addition to Corollary~\ref{cor:globalquotient}, we interpret the other obstructions appearing in
conditions (\ref{item:inertia}) and (\ref{item:cover}) of Theorem
\ref{thm:TFAE} for toric DM stacks in terms of stacky fans. 
Specifically, using results in \cite{GHJK:2011}, we see
that the inertia homomorphisms of condition (\ref{item:inertia}) can
be identified with very natural homomorphisms defined in terms of the
stacky fan (Proposition \ref{prop:inertia homomorphism for DM
  stacks}).  
  Additionally, we describe the universal cover of a toric
DM stack in terms of its stacky fan in Proposition \ref{prop:universal
  cover of toric stack}.
It is interesting to note that the universal cover of a toric DM stack
is given in terms of its stacky fan data, and is hence also a toric DM
stack arising from a stacky fan. In particular, global quotients among
toric DM stacks are quotients of toric \emph{manifolds} by finite
group actions. We expect such a nice description to be useful in
computations of orbifold/stacky invariants, particularly for global
quotients (cf. \cite{AdeRua03}, \cite{FG03}, for example).



\bigskip

The contents of this paper are as follows.  After a brief discussion
of stacks and fundamental groups of stacks in Section
\ref{sec:preliminaries}, we specialize to quotient stacks in Section
\ref{section:pi1 of quotient stacks}, where we describe in detail the
fundamental group, and inertia homomorphism of quotient stacks.  In
Section \ref{section:quotients and covers}, we describe the universal
cover of a quotient stack and prove Theorem \ref{thm:TFAE} stated
above; analogous results for connected but non-simply connected $X$
are also explored in this section.  In Section
\ref{sec:toricDMstacks}, we turn our attention to toric DM stacks,
where we describe the inertia homomorphism (Section
\ref{subsec:toricinertia}) and universal cover (Section
\ref{subsec:toriccover}) of toric DM stacks.  We also verify an
expected relation between the symplectic volumes (in the stacky
polytope case) of a symplectic toric DM stack and its universal cover,
and the corresponding volumes of the underlying polytopes.  We
conclude with some examples in Section \ref{subsec:examples}.

\medskip
\noindent \textbf{Acknowledgements.} We thank Anthony Bahri for asking
us the question which was the catalyst for this project, and Andrew Nicas for
continuing support and many useful conversations.

\section{Preliminaries}\label{sec:preliminaries}

This section establishes notation and collects some facts about 
stacks.  We mainly follow the notation of  \cite{Noohi:2005} and \cite{BCEFFGK-stacks}.

\subsection{Stacks}

We will mainly work over the base category $\mathsf{Diff}$ (smooth
manifolds and smooth maps), although occasionally we shall work over 
$\mathsf{Top}$ (topological spaces and continuous maps). To streamline
some of the discussion below, we use $\mathsf{Sp}$ to denote either $\mathsf{Diff}$
or $\mathsf{Top}$, and we refer to the corresponding objects
simply as spaces and the morphisms as maps.

For a space $M$, let $\und{M}$ denote its associated stack, with objects
$\{f:E\to M\}$, i.e. the collection of maps in $\mathsf{Sp}$ with
target $M$, and with 
morphisms $\{g:E \to E' \, | \, f' \circ g = f\}$. Given a   map
$F:M\to M'$, we write $\und{F}:\und{M} \to \und{M}'$ for the induced
morphism of stacks.
Fix a terminal object $\star$ in $\mathsf{Sp}$. 
For a choice of point $p$ in a space $M$, let $\und{p}$ be the corresponding
point in
$\und{M}$ (i.e. by abuse of notation, $p$ also denotes the map $\star
\mapsto p
\in M$ and $\und{p}$ the induced morphism of stacks).  More generally, recall
that a
point $x$ in a stack $\mathcal{X}$ is a morphism $x:\und{\star} \to
\mathcal{X}$.

A morphism $F:\mathcal{Y} \to \mathcal{X}$ of stacks is
\emph{representable} if for every morphism  $\und{W} \to \mathcal{X}$ whose
source is (the stack associated to) a space, the fiber product $\mathcal{Y}
\times_{\mathcal{X}} \und{W}$ is equivalent to (the stack associated to) a
space.  In this case, we shall call the induced map $\mathcal{Y}
\times_{\mathcal{X}} \und{W} \to \und{W}$ a representative of $F$. Notice that
the representative of a representable map is (induced by) a map of spaces.

\begin{definition} \cite[Definition 66]{Met03}
A stack $\mathcal{X}$ over $\mathsf{Sp}$ is \emph{locally representable} if
there exists a representable epimorphism of stacks $p:\und{X} \to \mathcal{X}$.
In this case, $p$ is called a \emph{presentation} of $\mathcal{X}$.
\end{definition}

\begin{remark}
In  \cite{Noohi:2005}, a locally representable stack over
$\mathsf{Top}$ is called a \emph{pretopological stack}.
\end{remark}

Many properties of representable morphisms of stacks are defined by the
corresponding properties of their representatives, which are maps of spaces.
For example we have the following (cf. \cite{Noohi:2005}). 

\begin{definition}
 A morphism $\mathcal{Y}\to \mathcal{X}$ of locally representable stacks in
$\mathsf{Top}$ is a \emph{covering projection} if it is representable and if
every representative is a covering projection.
\end{definition}

Several other properties of maps in $\mathsf{Sp}$ can often be
translated similarly into properties of representable maps of stacks (e.g. proper, closed, submersion, etc. see \cite[Section 3.3]{Met03}).  For
now, we simply recall that an important necessary condition for a
property of maps to be thus translatable is that 
they be stable under taking pullbacks.  (If
$\mathsf{Sp}=\mathsf{Diff}$, it is additionally required that the property be
stable under pullbacks via submersions.) We record the following
definition from \cite{Met03}, which connects our point of view with
that of orbifolds and orbifold charts.

\begin{definition} 
A locally representable stack
  $\mathcal{X}$ over $\mathsf{Diff}$ is called an \emph{orbifold} or a \emph{Deligne-Mumford stack}  if  it admits a presentation $p:\und{X} \to \mathcal{X}$ that is \'etale
  and if the diagonal map $\Delta:\mathcal{X} \to \mathcal{X} \times
  \mathcal{X}$ is proper.
\end{definition}

In the literature, the term orbifold is at times reserved for Deligne-Mumford stacks with trivial global (generic) stabilizer (inertia group), what is elsewhere called an \emph{effective} or \emph{reduced} orbifold.  The above definition allows for non-trivial global stabilizer.

An important instance of the above definition is the following.
 Let $X$ be a manifold equipped with a smooth action by a compact Lie group
$G$. If the action is locally free (i.e. with finite isotropy groups), then the
associated quotient stack $[X/G]$ (see Section \ref{section:pi1 of quotient
stacks}) is an orbifold. This will be our main source of examples. 

We shall at times wish to view a \emph{geometric} stack over
$\mathsf{Diff}$---a locally representable stack whose presentation is a surjective submersion---as a stack over $\mathsf{Top}$.  To say this more precisely, recall that given a geometric stack with presentation $\und{X_0} \to \mathcal{X},$ there is a Lie groupoid $\mathcal{G}= (X_1 \rightrightarrows X_0)$ whose associated stack $B\mathcal{G}$ (see \cite[Section 4]{Lerman:2010}) is equivalent to $\mathcal{X}$.  (In this case, we say that $\mathcal{G}$ is a \emph{groupoid presentation} for $\mathcal{X}$.) Considering the Lie groupoid $\mathcal{G}$ as a groupoid object in $\mathsf{Top}$, we view $B\mathcal{G}$, and hence $\mathcal{X}$, as a stack over $\mathsf{Top}$.

\subsection{Fundamental group of topological
  stacks}\label{subsec:fundamental group}

Following the work of Noohi \cite{Noohi:2005}, 
we now recall some of the main definitions surrounding the fundamental group of a topological stack that we later 
interpret more explicitly for quotient stacks $\mathcal{X} =
[X/G]$ as above.  


In this subsection, we work over
$\mathsf{Top}$. 
Let $\mathcal{X}$
and $\mathcal{Y}$ be stacks and $x,y$ points in
$\mathcal{X},
\mathcal{Y}$ respectively.  Recall that a
pointed map $(f,\alpha):  (\mathcal{Y},y) \to (\mathcal{X}, x)$ of
stacks consists of a morphism $f:\mathcal{Y} \to \mathcal{X}$ and a
2-isomorphism $\alpha: x\leadsto f (y)=f\circ y$, where 
as in
\cite{Noohi:2005} 
we use the symbol $\leadsto$ for a 2-isomorphism of points, reserving the symbol $\Rightarrow$ for other 2-isomorphisms.
If $(f,\alpha)$ and $(g,\beta)$ are
pointed maps $(\mathcal{Y},y) \to (\mathcal{X},x)$, a
(pointed) 2-isomorphism
$\epsilon: (f,\alpha) \Rightarrow (g,\beta)$ is a $2$-isomorphism
$\epsilon:f\Rightarrow g$ such that the induced 2-isomorphism
$\epsilon(y)~:~f(y) \leadsto g(y)$ satisfies
$\epsilon(y)\alpha=\beta$.  (Here $\epsilon(y)= \epsilon *
\mathrm{id}_y$, the `horizontal' composition of $2$-morphisms, while
$\epsilon(y)\alpha$ denotes the `vertical' composition of
$2$-morphisms as in \cite{Lerman:2010}.)

 Let $M$ be a topological space with a chosen point $p$ and $(\und{M},\und{p})$
 its associated pointed stack. We begin with a remark regarding 
morphisms (resp. pointed morphisms) from $\und{M}$
(resp. $(\und{M}, \und{p})$) to a stack $\mathcal{X}$
(resp. $(\mathcal{X}, x)$). 

\begin{remark} \label{remark:howtorepresentmaps} 
Let $M$ be an object in $\mathsf{Top}$. A morphism $f:\und{M}
  \to \mathcal{X}$ 
determines an
  object $E_f$ in $\mathcal{X}$ (over $M$) given by evaluation at the
  identity $E_f:=f( \mathrm{id}_M:M\to M)$.  In fact, evaluation at
  the identity defines a functor $\mathrm{ev}_M$ from
  $\mathrm{HOM}(\und{M},\mathcal{X})$ to $\mathcal{X}_M$ which is
  surjective on objects and fully faithful (Proposition 2.20 in
  \cite{BCEFFGK-stacks}).  That is, given an object $E$ in
  $\mathcal{X}$ over $M$ there is a morphism $g:\und{M} \to
  \mathcal{X}$ such that $E_g=E$. Moreover, given an isomorphism
  $\varphi:E_g\to E_f$ in $\mathcal{X}_M$, there exists a unique
  $2$-isomorphism, which we will also denote $\varphi:g\Rightarrow f$,
  whose evaluation at the identity is $\varphi$. 
  Thus, up to canonical $2$-isomorphism a map $\underline{M} \to \calX$ is determined by the  data of an object $E$ in $\calX$ over $M$. 

  A similar reasoning applies to pointed maps.  Fix points $\und{p}$
  in $\und{M}$ and $x$ in $\mathcal{X}$.  Let
  $E_x=x(\mathrm{id}_{\star}: \star \to \star)$ in
  $\mathcal{X}_\star$.  A pointed
  map $(f,\alpha): (\und{M}, \und{p}) \to (\mathcal{X},x)$ determines
  objects $E_f$ and $E_{f(p)}$ (over $M$ and $\star$, respectively), a
  morphism $E_{f(p)} \to E_f$ (over $p:\star \to M$) and an
  isomorphism $\alpha: E_x \to E_{f(p)}$.  Conversely, as in the proof
  of Proposition 2.20 in \cite{BCEFFGK-stacks}, given objects $E$ in
  $\mathcal{X}_M$ and $E_0$ in $\mathcal{X}_\star$ and a morphism $E_0
  \to E$ over $p:\star \to M$, there is a morphism $f:\und{M} \to
  \mathcal{X}$ such that $E_f=E$, $E_{f(p)}=E_0$ and the induced
  morphism $E_{f(p)} \to E_f$ is the given one.  Since
  $\mathrm{ev}_\star$ is fully faithful, a choice of isomorphism
  $\alpha: E_x \to E_0$ then determines a unique $2$-isomorphism,
  which we will also denote $\alpha:x \leadsto f(p)$, and hence a
  pointed map $(f,\alpha)$ whose evaluation at the identity yields the
  data $E_0 \to E$ with the isomorphism $\alpha$. 

 Moreover, given pointed maps $(f,\alpha)$ and $(g,\beta)$ and isomorphisms $\varphi:E_g \to E_f$ and $\psi:E_{g(p)} \to E_{f(p)}$ such that the diagram
 $$
 \xymatrix@R=1em{
 & E_{g(p)} \ar[r] \ar[dd]^{\psi}& E_g \ar[dd]^{\varphi} \\
 E_x \ar[ur]^\alpha \ar[dr]_\beta & & \\
 & E_{f(p)} \ar[r] & E_f
 }
 $$
 commutes, there is a unique 2-isomorphism $\epsilon:(g,\beta) \Rightarrow (f,\alpha)$
  with $\mathrm{ev}_M(\epsilon)=\varphi$ and $\mathrm{ev}_\star(\epsilon(p) )=  \psi$.
   As in the unpointed case above, we  conclude that a pointed map $(\und{M}, \und{p}) \to (\calX, x)$ is determined up to unique $2$-isomorphism by objects $E$ in $\calX_M$ and $E_0$ in $\calX_\star$, a morphism $E_0 \to E$ over $p: \star  \to \und{M}$, and a choice of $2$-isomorphism $\alpha: E_0 \to E_x$. 
 \end{remark}

 Let $I$ denote the unit interval
 $[0,1]$.  Recall that a {\em homotopy} of pointed maps $(f,\alpha),(g,\beta):
 (\und{M}, \und{p}) \to (\mathcal{X},x)$ consists of: a morphism $H:
 \und{I\times M} \cong \und{I} \times \und{M} \to \mathcal{X}$, a $2$-isomorphism for the
 diagram
$$
\xymatrix{ {\und{I \times \star} \cong \und{I}} \ar[r]  \drtwocell<\omit>{<0>}
               \ar[d]_{{\und{i}_1}} 
                                   & {\und{\star}} \ar[d]^{x}    \\
                                        {\und{I} \times \und{M}} \ar[r]_{H}        & {\mathcal{X}}           }
$$
and a pair
of pointed 2-isomorphisms $\epsilon_0:f\Rightarrow H_0$ and $\epsilon_1: H_1
\Rightarrow g$.  Here $i_1= (\mathrm{id}, p): I \to I\times M$ is the
inclusion into the first factor, and $H_0$ and $H_1$ denote the
pointed maps induced by restriction to $\und{\{0\}} \times \und{M}$
and $\und{\{1\}} \times \und{M}$ respectively. When such a homotopy
exists, we shall sometimes say that $(f,\alpha)$ is homotopic to
$(g,\beta)$ or that $(H,\epsilon_0,\epsilon_1)$ is a homotopy from
$(f,\alpha)$ to $(g,\beta)$.  This induces an equivalence relation on
the set of pointed maps $(\und{M}, \und{p}) \to (\calX,x)$ and we
denote by $[(\und{M},\und{p}),(\mathcal{X},x)]$ the resulting set of
equivalence classes (i.e. the set of homotopy classes of pointed
maps).

That the relation above is reflexive and symmetric is easily verified.  To show
it is also transitive, we must be able to `compose' homotopies---that is, given
a homotopy $(H,\epsilon_0,\epsilon_1)$ from $(f,\alpha)$ to $(g,\beta)$ and a
homotopy $(H',\epsilon'_0, \epsilon'_1)$ from $(g,\beta)$ to $(h,\gamma)$,
construct a homotopy $(H'', \epsilon''_0, \epsilon''_1)$ from $(f,\alpha)$ to
$(h,\gamma)$.  This is proved in Lemma 17.4 in \cite{Noohi:2005} in greater
generality.  (In the case of pointed maps whose domain is the associated stack of
a space $M$, we may observe the composition of homotopies more readily in light
of Remark \ref{remark:howtorepresentmaps}.  Indeed, the objects $E_H$ and
$E_{H'}$ in $\calX_{I\times M}$ can be glued together to form the required
object $E_{H''}$ that yields the desired composition of homotopies.)

As in \cite[Remark 17.3]{Noohi:2005}), observe that a 2-isomorphism $\epsilon:(f,\alpha) \Rightarrow (g,\beta)$ of pointed maps   induces a homotopy.  Indeed, let $H$ be the composition $\und{I} \times \und{M} \stackrel{\und{\mathrm{pr}_2}}{\longrightarrow} \und{M} \stackrel{f}{\longrightarrow} \mathcal{X}$, with $2$-isomorphism as indicated by the diagram
$$
\xymatrix{ {\und{I}} \ar[rr]  \drtwocell<\omit>{<0>\,{\mathrm{id}}}
               \ar[d]_{{\und{i}_1}} 
                                   & & {\und{\star}} \ar[dl]_{\und{p}} \ar[dr]^{x}\drtwocell<\omit>{<2.5>\,{\alpha}} &   \\
                                        {\und{I} \times \und{M}}  \ar[r]_{\mathrm{pr}_2}    & {\und{M}} \ar[rr]_{f}&        & \mathcal{X}
}
$$
(i.e. the $2$-isomorphism $(\mathrm{id}_f * \mathrm{id})(\alpha * \mathrm{id}_\star)$), $\epsilon_0=\mathrm{id}$ and $\epsilon_1=\epsilon$.  This observation proves the following Lemma.

\begin{lemma}\label{lemma:homotopy class independent of 2-morphism}
  Let $p$ be a point in a space $M$ and let $(\calX, x)$ be a pointed stack. Let
$(f,\alpha)$ and $(g,\beta)$ be pointed maps $(\und{M}, \und{p}) \to (\calX,x)$.
If there exists a
 2-isomorphism of pointed maps $(f,\alpha) \Rightarrow (g,\beta)$, then the
  homotopy classes of $(f,\alpha)$ and $(g,\beta)$ are equal.
\end{lemma}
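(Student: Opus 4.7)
The plan is simply to make explicit the construction already sketched in the paragraph immediately preceding the lemma, which shows how a pointed $2$-isomorphism $\epsilon: (f,\alpha) \Rightarrow (g,\beta)$ gives rise to a homotopy $(H, \epsilon_0, \epsilon_1)$ from $(f,\alpha)$ to $(g,\beta)$. There is essentially nothing new to prove; the lemma simply records the observation. My proof would therefore consist of assembling the data and verifying the pointed compatibility.

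Concretely, I would take $H := f \circ \und{\mathrm{pr}_2}: \und{I} \times \und{M} \to \mathcal{X}$, where $\mathrm{pr}_2: I \times M \to M$ is projection onto the second factor. Under the canonical identifications $\und{\{j\} \times M} \cong \und{M}$ for $j=0,1$, both restrictions $H_0$ and $H_1$ are equal to $f$ on the nose. For the $2$-isomorphism required at the basepoint, note that $H \circ \und{i_1} = f \circ \und{\mathrm{pr}_2} \circ \und{i_1} = f \circ \und{p}$, so the pasting $(\mathrm{id}_f * \mathrm{id})(\alpha * \mathrm{id}_{\und{\star}})$ described in the preceding paragraph supplies the needed $2$-isomorphism. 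Finally, I would set $\epsilon_0 := \mathrm{id}: f \Rightarrow H_0$ and $\epsilon_1 := \epsilon: H_1 \Rightarrow g$.

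The only verification is that $\epsilon_0$ and $\epsilon_1$ are \emph{pointed} $2$-isomorphisms in the sense of the definition, i.e., that they intertwine the basepoint data. For $\epsilon_0 = \mathrm{id}$ this is immediate. For $\epsilon_1 = \epsilon$, the required condition $\epsilon(p)\alpha = \beta$ is exactly the hypothesis that $\epsilon$ is a $2$-isomorphism of pointed maps from $(f,\alpha)$ to $(g,\beta)$. No step presents a substantive obstacle; the content of the lemma is definitional, and the proof amounts to unwinding the definitions of pointed homotopy and pointed $2$-isomorphism and observing that they match up as required.
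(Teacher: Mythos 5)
Your proof is correct and is essentially identical to the paper's: the paper also takes $H = f\circ \und{\mathrm{pr}_2}$ with the pasted $2$-isomorphism $(\mathrm{id}_f * \mathrm{id})(\alpha * \mathrm{id}_{\star})$ at the basepoint, $\epsilon_0 = \mathrm{id}$, and $\epsilon_1 = \epsilon$. Your explicit check that $\epsilon_1$ is pointed (via $\epsilon(p)\alpha = \beta$) is a small but welcome addition that the paper leaves implicit.
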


\begin{remark} \label{remark:representingmaps}
 It follows from Remark \ref{remark:howtorepresentmaps} that the homotopy class of a pointed map $(\und{M},\und{p}) \to (\mathcal{X},x)$ is determined by specifying objects $E$ in $\mathcal{X}_M$ and $E_0$ in $\mathcal{X}_\star$, along with a morphism $E_0 \to E$ over $p:\star \to M$ and an isomorphism $\alpha:E_x \to E_0$.  
\end{remark}

Let  $1\in S^1$, the unit circle in $\C$.

\begin{definition}
  Let $x$ be a point in the stack $\mathcal{X}$.  Define the {\em
    fundamental group of $\mathcal{X}$} to be the set of homotopy
  classes of pointed maps $(\und{S}^1,\und{1}) \to (\mathcal{X},x)$,
  denoted
  $\pi_1(\mathcal{X},x):=[(\und{S}^1,\und{1}),(\mathcal{X},x)]$.
  \end{definition}

The reader may wish to consult \cite{Noohi:2005} for details concerning the group structure of $\pi_1(\mathcal{X},x)$.  


In Section \ref{section:quotients and covers}, we shall consider covering projections of (connected) quotient  stacks.  Making use of \cite[Corollary 18.20]{Noohi:2005}, we say that a covering projection $\mathcal{Y} \to \mathcal{X}$ is the \emph{universal covering projection} if $\pi_1(\mathcal{Y},y)$ is trivial.

 An interesting feature of the fundamental group of a stack is the following natural homomorphism $\omega_x: I_x \to \pi_1(\mathcal{X},x)$, where $I_x := \{ \alpha: x \leadsto x\}$ is the {\em inertia group of $\mathcal{X}$ at $x$}. 
 The homomorphism $\omega_x$ is defined as follows (cf. \cite[Remark 17.3]{Noohi:2005}). Given $\alpha \in I_x$, let $H_\alpha:\und{I} \to \mathcal{X}$ be defined by the composition $\und{I} \to \und{\star} \stackrel{x}{\longrightarrow} \mathcal{X}$ and consider the pair of 2-isomorphisms $\epsilon_0=\mathrm{id}$ and $\epsilon_1=\alpha$ in the following diagrams.
$$
\xymatrix{
 & \und{\star} \ar[dl]_{\iota_0}
 \ar[dr]^{x}\drtwocell<\omit>{<2.5>\,{\epsilon_0}} & \\
 \und{I} \ar[rr]_{H_\alpha} & & \mathcal{X}
} \quad \quad
\xymatrix{
 & \und{\star} \ar[dl]_{\iota_1} \ar[dr]^{x} \drtwocell<\omit>{<2.5>\,{\epsilon_1}}& \\
 \und{I} \ar[rr]_{H_\alpha} & & \mathcal{X}
}
$$
This data glues together to give a pointed map $(\und{S}^1, \und{1})
\to (\mathcal{X},x)$  
whose homotopy class is denoted $\omega_x(\alpha)$.  

\section{The fundamental group of $\mathcal{X}=[X/G]$} \label{section:pi1 of
quotient stacks}

We now specialize to our case of interest, stacks $\mathcal{X} =
[X/G]$, where $X$ is a smooth manifold equipped with a (right) action
of a Lie group $G$.  In this case, recall that the objects of
$\mathcal{X}$ are pairs of maps $(B\leftarrow E \to X)$ consisting of a (right)
principal $G$-bundle $E\to B$ and a $G$-equivariant map $E\to X$ and
that the morphisms of $\mathcal{X}$ are pairs $(E\to E', B\to B')$
such that in the following diagram
$$
\xymatrix@C=1.5em@R=1.5em{
 & & X \\
 E \ar@/^/[urr] \ar[r] \ar[d] & E' \ar[d] \ar@/_/[ur] & \\
 B \ar[r] & B' & \\
}
$$
the square is Cartesian and the triangle commutes.  Fix a point $x$ in
$\mathcal{X}$, and write $( \star \leftarrow  E_x \to X)$ to denote the
object $x(\mathrm{id}_\star)$ in $\mathcal{X}_\star$.  By choosing a
base point $e_0 \in E_x$ we shall further identify $E_x \cong G$, in
which case the $G$-equivariant map $x:E_x \to X$ is realized by the
map $x_p:G\to X$, $g\mapsto p\cdot g$ where $p:=x(e_0)$. Thus $x_p$ parameterizes the
$G$-orbit through $p$.

An element $\alpha \in I_x$ induces a gauge transformation $\alpha: E_x \to E_x$ such that the diagram below commutes.
$$
\xymatrix@C=1.5em@R=1.5em{ & & X \\
E_x \ar[r]^{\alpha} \ar@/^/[rru]^x & E_x \ar@/_/[ru]_x 
}
$$
 Moreover, any such gauge transformation determines an element in $I_x$.   Under the identification $E_x \cong G$ determined by $e_0 \leftrightarrow 1$, any $\alpha \in I_x$ is determined by $g_\alpha = \alpha(1)$, which must lie in $\Stab(p)$ since the above diagram commutes. That is, $\alpha:G\to G$ is simply left multiplication by $g_\alpha$.  In what follows, we will implicitly use the isomorphism $\alpha \mapsto g_\alpha$ to identify $I_x \cong \Stab(p)$.

In this setting, the homomorphisms $\omega_x$ discussed at the end of 
Section~\ref{subsec:fundamental group} may be described explicitly.
The data determined by $H_\alpha:\und{I} \to \mathcal{X}$ yields the
trivial bundle $I \times G \to I$ with $G$ equivariant map $I\times G
\to X$ given by $(t,g)\mapsto p\cdot g$.  The 2-isomorphisms
$\epsilon_0 =\mathrm{id}$ and $\epsilon_1=\alpha$ yield the
identifications of fibers 
\begin{align*}
\{0\} \times G \to G & & G\to \{1\} \times G \\
(0,g) \mapsto g & & g \mapsto (1,g_\alpha g).
\end{align*}
This assembles to  give a bundle over $S^1$ and an equivariant map
representing $\omega_x(\alpha)$ (cf. Remark
\ref{remark:representingmaps}). Namely, we take 
\begin{equation}\label{eq:definition E alpha}
E({g_\alpha}) :=( I \times G )/\sim, \text{ where the equivalence relation
  $\sim$ is }  (0,g) \sim (1,g_\alpha g) \textup { for } g \in G,
\end{equation}
and the equivariant map is given by $E({g_\alpha}) \to X$, $[(t,g)] \mapsto p \cdot
g$. 

\medskip

Let $BG:=[\star/G]$ and let $*:\star \to BG$ be the choice
of point whose value at $S\to \star$ is the trivial bundle $S
\times G \to S$.

\begin{lemma}\label{lemma:fundamental group of BG} (Cf. \cite[Example 4.2]{Noohi:2004}) 
Let $G$ be a Lie group and let $G_0$ denote the  component of the identity element in $G$.  Then $\pi_1(BG,*) \cong G/G_0$.
\end{lemma}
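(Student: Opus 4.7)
The plan is to construct an explicit bijection $G/G_0 \cong \pi_1(BG, *)$ using the family of bundles $E_h$ defined in equation \eqref{eq:definition E alpha}. By Remark~\ref{remark:representingmaps} together with the bundle-theoretic description of $[X/G]$ at the beginning of this section (specialized to $X = \star$), a homotopy class of pointed map $(\und{S}^1, \und{1}) \to (BG, *)$ is determined by a principal $G$-bundle $E \to S^1$ equipped with an isomorphism $G \to E_1$ of the fiber over $1$ with the trivial $G$-torsor. For each $h \in G$, the bundle $E_h := (I \times G)/{\sim}$ with $(0,g) \sim (1, hg)$ carries a natural pointed trivialization $g \mapsto [(0,g)]$ at the fiber over $1$, and hence defines a pointed map $\varphi_h : (\und{S}^1, \und{1}) \to (BG, *)$. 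The assignment $h \mapsto [\varphi_h]$ yields a surjection $\Phi : G \to \pi_1(BG, *)$: every principal $G$-bundle on $S^1$ is obtained by trivializing over $I$ and gluing at the endpoints by some $h \in G$, and a pointed trivialization at $1$ pins this $h$ down uniquely, identifying the pointed bundle with $E_h$.

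The essential step is to prove that $\Phi(h) = \Phi(h')$ if and only if $h$ and $h'$ lie in the same connected component of $G$. For the ``if'' direction, a path $\gamma : I \to G$ from $h$ to $h'$ produces a principal $G$-bundle $\tilde{E}_\gamma := (I \times I \times G)/{\sim}$ over $I \times S^1$ with gluing $(t, 0, g) \sim (t, 1, \gamma(t) g)$ and a natural trivialization along $I \times \{1\}$; this is precisely the data of a homotopy (in the sense of Section~\ref{subsec:fundamental group}) from $\varphi_h$ to $\varphi_{h'}$. Conversely, a homotopy from $\varphi_h$ to $\varphi_{h'}$ is given by a principal $G$-bundle $\tilde{E}$ over $I \times S^1$ with trivialization along $I \times \{1\}$; pulling back along the quotient map $I \times I \to I \times S^1$ produces a trivializable bundle whose gluing data is a continuous path $t \mapsto h_t$ in $G$ with $h_0 = h$ and $h_1 = h'$, so $h$ and $h'$ belong to the same component. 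Consequently $\Phi$ descends to a bijection $\overline{\Phi} : G/G_0 \to \pi_1(BG, *)$.

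Finally, I would verify that $\overline{\Phi}$ is a group homomorphism with respect to the multiplication on $\pi_1(BG, *)$ defined in \cite{Noohi:2005}. Concatenation of the loops $\varphi_h$ and $\varphi_{h'}$ is represented by the pointed bundle obtained by placing $E_h$ on one arc of $S^1$ and $E_{h'}$ on the other and identifying fibers at the basepoint via the chosen trivializations; up to reparametrization this bundle is isomorphic to $E_{hh'}$ (or $E_{h'h}$, depending on orientation conventions), yielding the homomorphism property. The main obstacle is this last verification: carefully unpacking how composition of pointed $2$-isomorphisms in $BG$ translates into gauge transformations between principal $G$-bundles, and confirming that the resulting concatenation matches the group law on $G/G_0$, is the technical heart of the argument, and requires care since $G$ need not be abelian.
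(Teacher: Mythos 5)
Your proposal is correct and follows essentially the same route as the paper: both construct the bijection $gG_0 \mapsto [E(g)]$ using the mapping-torus bundles of equation \eqref{eq:definition E alpha} with the pointed trivialization of the fiber over $1$, prove surjectivity by classifying $G$-bundles over $S^1$ by their gluing element, and prove injectivity by extracting from a stacky homotopy a pointed bundle isomorphism over $I \times S^1$ whose transition data is a path in $G$ from $g$ to $g'$. The only notable difference is that you explicitly flag the compatibility of concatenation with the group law on $G/G_0$ as a step still to be checked, a point the paper's proof passes over in silence after establishing bijectivity.
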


\begin{proof}
We define a map $\sigma: G/G_0 \to \pi_1(BG, *)$ and show that $\sigma$ is an isomorphism.  For $g\in G$ let $E(g)\to S^1$
be the $G$-bundle  $E(g) = (I\times G)/ (0,h) \sim (1,gh)$.  Let
$\sigma(gG_0)=[(f,\alpha)]$ where $(f,\alpha)$ is a morphism
determined by the bundles $E(g)\to S^1$ and $E(g)|_{\{1\}}\to \star$,
the inclusion of the fiber $E(g)|_{\{1\}}\to E(g)$ over $1:\star \to
S^1$, and the $2$-isomorphism $\alpha$ determined by the
identification  $G \to  E(g)|_{\{1\}}$, $h\mapsto [(0,h)]$. Here the
brackets $[ \hspace{1mm} ]$ denotes equivalence classes with respect
to the relation in~\eqref{eq:definition E alpha}. (Recall that by Remark \ref{remark:representingmaps}, any other pointed map determined by this data differs from $(f,\alpha)$ by a unique 2-isomorphism of pointed maps, which by Lemma \ref{lemma:homotopy class independent of 2-morphism} lies in the same homotopy class.)  To see that $\sigma$ is well-defined, observe that if $\gamma:I\to G$ is any path from $g$ to $g'$, we may construct a bundle isomorphism $\varphi_\gamma:E(g) \to E(g')$ by the formula $[(t,h)] \mapsto [(t,\gamma(t)g^{-1}h)]$ that fits in the diagram 
\begin{equation} \label{diagram:pointedisomorphism}
 \xymatrix@R=1em{
 & E({g})|_{\{1\}} \ar[r] \ar[dd]^{\varphi|_{\{1\}}}& E(g) \ar[dd]^{\varphi} \\
 G \ar[ur]^\alpha \ar[dr]_{\alpha'} & & \\
 & E(g')|_{\{1\}} \ar[r] & E(g')
 }
\end{equation}
where $\alpha$ and $\alpha'$ are the resulting isomorphisms over $\star$.  Let us call such a bundle isomorphism (i.e. one which respects the given trivializations over $\star$) a \emph{pointed isomorphism}. By Lemma \ref{lemma:homotopy class independent of 2-morphism}, we see that the map $\sigma$ is well-defined.  

Since every $G$-bundle over $S^1$ is isomorphic to $E(g)$ for some $g$, it is easy to see that $\sigma$ is surjective. It remains to show that $\sigma$ is injective.  To that end, suppose that $[(f,\alpha)]=\sigma(gG_0)=\sigma(g'G_0)=[(f'\alpha')]$.  Let $(H,\epsilon_0,\epsilon_1)$ be a homotopy from $(f,\alpha)$ to $(f',\alpha')$.  We claim that this results in a pointed isomorphism of bundles as in the diagram (\ref{diagram:pointedisomorphism}) above, which will complete the proof.  Indeed, write the resulting bundle isomorphism  $E(g) \to E(g')$ as $[(t,h)] \mapsto [(t,\phi(t)h)]$, where $\phi:I\to G$ is continuous and satisfies $g'\phi(0) = \phi(1) g$.  That the isomorphism is pointed (i.e. must fit in the diagram (\ref{diagram:pointedisomorphism})) forces $\phi(0)=1$ and we may construct a path $\gamma(t) = \phi(t)g$ joining $g$ and $g'$, whence $gG_0 = g'G_0$.  

We shall now verify the claim that the homotopy produces the required pointed isomorphism of bundles $E(g) \to E(g')$.
Let $E \to I \times S^1$ be the bundle given by $H(\mathrm{id}:I\times S^1 \to I \times S^1)$ and let $E_j \to S^1$ ($j=0,1$) denote the bundles given by $H(\varepsilon_j:S^1=\{j\} \times S^1 \hookrightarrow I \times S^1)$.   Note that the 2-isomorphisms of pointed maps $\epsilon_j$ yield pointed isomorphisms of bundles $E(g) \to E_0$ and $E_1 \to E(g')$.  Therefore, it suffices to find a pointed isomorphism $E_0 \to E_1$. 

Let $\eta$ denote the $2$-isomorphism  for the diagram 
\[
\xymatrix{
 \und{I} \ar[r]  \drtwocell<\omit>{<0>\,{\eta}}
               \ar[d]_{{\und{i}_1}} 
                                   & {\und{\star}} \ar[d]    \\
                                        {\und{I} \times \und{S}^1} \ar[r]_{H}        & {BG}           }
\]
that in turn induces a trivialization $\eta:I \times G \to E_I$, where $E_I$ denotes the bundle over $I$ corresponding to $H(i_1:I \to I \times S^1)$.  We therefore seek a pointed isomorphism
$$
 \xymatrix@R=1em{
 & E_{0}|_{\{1\}} \ar[r] \ar[dd]^{\varphi|_{\{1\}}}& E_0 \ar[dd]^{\varphi} \\
 G \ar[ur]^{\eta_0} \ar[dr]_{\eta_1} & & \\
 & E_{1}|_{\{1\}} \ar[r] & E_{1}
 }
$$
where the identifications $\eta_j$ of the fibers over $1\in S^1$ are induced from the trivialization $\eta$.
By Theorem 9.8 in \cite[Ch. 4]{Husemoller:1994}, there is a bundle isomorphism $\psi: E \to I \times E'$ where $E'$ is a $G$-bundle over $S^1$, inducing an isomorphism $\psi':E_I \to I \times E'|_{\{1\}}$.  Notice that the composition $\psi'\circ \eta: I\times G \to I \times E'|_{\{1\}}$ defines an identification $\eta':G\to E'|_{\{1\}}$. Consider the composition of bundle maps given by 
$$
\rho:E_0 \to E \stackrel{\psi}{\longrightarrow} I \times E' \to \{1\} \times E' \to I \times E' \stackrel{\psi^{-1}}{\longrightarrow} E
$$
and observe that it fits in the commutative square below.
$$
\xymatrix{
E_{0} \ar[r]^{\rho} \ar[d] & E \ar[d] \\
S^1 \ar[r]^{\varepsilon_1} & I \times S^1
}
$$
By the universal property of Cartesian squares, this induces a unique isomorphism $\varphi:E_0 \to E_1$, which is the desired pointed isomorphism.

\end{proof}

\begin{remark} 
    It is well-known that for a compact Lie group $G$, (unpointed) isomorphism
    classes of principal $G$-bundles over the circle are in bijective
    correspondence with the set of conjugacy classes of $G/G_0$. 

\end{remark}

\begin{lemma}\label{lemma:pi1 topological space}
Let $X$ be a topological space and $p$ a  point in $X$. Then 
\[
\pi_1(\und{X}, \und{p}) \cong \pi_1(X,p)
\]
where the right hand side is the classical fundamental group of a
topological space with chosen basepoint. 
\end{lemma}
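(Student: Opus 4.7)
The plan is to show that when $\calX = \und{X}$ is the stack associated to a topological space $X$, both pointed maps $(\und{S}^1,\und{1}) \to (\und{X},\und{p})$ and their homotopies reduce strictly to their classical analogues, so that $\pi_1(\und{X},\und{p})$ and $\pi_1(X,p)$ are literally the same set equipped with the same group operation.

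The first step is to record the key observation that for any space $M$, the fiber category $\und{X}_M$ is \emph{discrete}: an object is a continuous map $E \to M$ thought of as part of $(M \leftarrow E \to X)$-style data, and the only morphisms over $\id_M$ are identities (any such morphism must preserve both the projection to $M$ and the map to $X$, forcing equality). In particular $\und{X}_\star$ is just the underlying set of $X$ viewed as a discrete category. Consequently every $2$-isomorphism in any diagram whose target is $\und{X}$ is the identity.

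Next, apply Remark~\ref{remark:representingmaps} to describe a pointed map $(f,\alpha): (\und{S}^1,\und{1}) \to (\und{X},\und{p})$. The data reduces to an object $E\in \und{X}_{S^1}$, i.e.\ a continuous map $\tilde{f}:S^1\to X$; an object $E_0\in \und{X}_\star$, i.e.\ a point $q\in X$; a morphism $E_0\to E$ over $1:\star \to S^1$, which forces $q=\tilde{f}(1)$; and an isomorphism $\alpha:E_{\und{p}}\to E_0$, which by discreteness of $\und{X}_\star$ forces $q=p$ and $\alpha=\id$. Thus pointed maps correspond bijectively, up to unique $2$-isomorphism, with pointed continuous maps $(S^1,1)\to(X,p)$. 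An identical unwinding shows that a stacky homotopy consists of a continuous map $H:I\times S^1\to X$ together with $2$-isomorphisms $\eta,\epsilon_0,\epsilon_1$ that are all forced to be identities; these identities amount to the classical conditions $H(t,1)=p$ for all $t$ and $H(0,\cdot)=\tilde{f}_0$, $H(1,\cdot)=\tilde{f}_1$. In other words, stacky homotopies are exactly classical based homotopies of loops.

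This yields a natural bijection $\Phi:\pi_1(X,p)\to \pi_1(\und{X},\und{p})$ sending $[\tilde{f}]$ to the class of its associated pointed map. To finish, I would verify that $\Phi$ is a group homomorphism: Noohi's group law on $\pi_1(\und{X},\und{p})$ is induced by a pinch-and-glue construction on $\und{S}^1$ (cf.\ \cite{Noohi:2005}), and under the correspondence above, gluing a pair of pointed maps into $\und{X}$ reduces — because every gluing $2$-isomorphism is forced to be the identity — to the classical gluing of pointed continuous maps out of $S^1 \vee S^1$. Hence $\Phi$ carries concatenation to concatenation. The main obstacle is really only bookkeeping: once the $2$-categorical data is recognized as trivial in this setting, the whole statement is a matter of reading off definitions, with the one genuine subtlety being that last translation of Noohi's group law into the classical concatenation of loops.
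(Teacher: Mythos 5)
Your proposal is correct and takes essentially the same approach as the paper: the paper's (much terser) proof likewise rests on the observation that stacks of the form $\und{M}$ have no non-trivial $2$-morphisms, so pointed maps and their homotopies reduce to the classical ones. Your additional care with the group law is a reasonable supplement, though note that an object of $\und{X}$ over $M$ is simply a continuous map $M\to X$ (not quotient-stack-style data), which only makes the discreteness of the fiber categories more immediate.
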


\begin{proof}
A pointed map of stacks $(\und{S^1},\und{1}) \to (\und{X},\und{p})$ is
determined uniquely by
  a pointed map  $(S^1, 1) \to (X,p)$ of topological spaces. Since there are no
non-trivial
  $2$-morphisms in  stacks of the form $\und{M}$, classical homotopies are in one-to-one correspondence with stack homotopies, and the claim follows.
\end{proof}

There is a natural morphism $q:\und{X} \to [X/G]$, defined on objects as 
$$
S\to X \quad \mapsto \quad (S \leftarrow S\times G \to X\times G \stackrel{{\mathrm{act}}}{\longrightarrow}  X).
$$
Similarly there is a natural morphism from 
$[X/G]$ to $BG$ which simply forgets the equivariant map to $X$. 
By choosing identifications of the trivial $G$-bundle over a point, these may each 
be considered as pointed  maps.  Finally, recall the pointed map $\iota:G_0 \to X$ given by parametrizing the orbit of the base point $g\mapsto p\cdot g$.
Applying the fundamental group functor to each of these maps and using the isomorphism in the previous Lemma results in the following.

\begin{proposition} 
Let $G$ be a Lie group acting smoothly on a connected manifold $X$.  The following sequence is exact:
\begin{equation}\label{eq:pi1 exact sequence}
\pi_1(G_0,1) \to \pi_1(X,p) \to \pi_1([X/G],x) \to G/G_0\to 1.
\end{equation}
\end{proposition}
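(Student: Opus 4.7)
The plan is to establish exactness at each of the three positions in the sequence by unpacking the definitions of the three arrows, which are induced respectively by the orbit parametrization $\iota:(G_0,1)\to (X,p)$, $g\mapsto p\cdot g$; the natural morphism $q:\und{X}\to [X/G]$ of the statement (which on objects sends $S\to X$ to the trivial $G$-bundle $S\times G$ with the obvious equivariant map); and the forgetful morphism $[X/G]\to BG$, combined with the isomorphism $\pi_1(BG,*)\cong G/G_0$ from Lemma \ref{lemma:fundamental group of BG}.

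For surjectivity onto $G/G_0$, I will use Remark \ref{remark:representingmaps} to build, for any $gG_0$, a pointed loop in $[X/G]$ from the following data: the bundle $E(g)\to S^1$ of Lemma \ref{lemma:fundamental group of BG}, an equivariant map $E(g)\to X$, and the standard trivialization of $E(g)|_{\{1\}}\cong G$ (coming from $h\mapsto[(0,h)]$). An equivariant map $(I\times G)/\!\sim\, \to X$ corresponds to a path $f:I\to X$ with $f(0)=f(1)\cdot g$; the basepoint condition forces $f(0)=p$, so it suffices to exhibit a path from $p$ to $p\cdot g^{-1}$, which exists by connectedness of $X$. By construction, the underlying bundle is $E(g)$ and hence maps to $gG_0$ under the forgetful arrow. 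For exactness at $\pi_1([X/G],x)$: loops in the image of $q_*$ have trivial underlying bundle by the definition of $q$, so lie in the kernel of the forgetful map; conversely, if a pointed loop $(E\to S^1,F:E\to X,\tau)$ lies in the kernel, then $\sigma^{-1}$ from Lemma \ref{lemma:fundamental group of BG} supplies a pointed isomorphism $E\cong S^1\times G$, under which $F$ becomes $(t,h)\mapsto \tilde\ell(t)\cdot h$ for a pointed loop $\tilde\ell:(S^1,1)\to(X,p)$, whose class under $q_*$ recovers the original class via Lemma \ref{lemma:homotopy class independent of 2-morphism}.

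The main obstacle is exactness at $\pi_1(X,p)$. For $\iota_*(\pi_1(G_0,1))\subseteq \ker q_*$, given $\gamma:(S^1,1)\to(G_0,1)$, its image in $[X/G]$ is the loop with trivial bundle $S^1\times G$ and equivariant map $(t,h)\mapsto p\cdot\gamma(t)\cdot h$. The gauge transformation $\psi:(t,h)\mapsto(t,\gamma(t)h)$ is well-defined on $S^1\times G$ because $\gamma(1)=1$, preserves the trivialization over $1\in S^1$, and converts the equivariant map to $(t,h)\mapsto p\cdot h$, namely the constant loop at $x$. This produces a $2$-isomorphism of pointed maps, so Lemma \ref{lemma:homotopy class independent of 2-morphism} yields the needed homotopy. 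Conversely, suppose $\ell:(S^1,1)\to(X,p)$ has trivial image in $\pi_1([X/G],x)$. A nullhomotopy in $[X/G]$ consists of a $G$-bundle $E\to I\times S^1$, an equivariant map $F:E\to X$, together with pointed identifications of the restrictions to $\{0\}\times S^1$ and $\{1\}\times S^1$ with $q\circ\ell$ and the constant loop respectively. Since the restriction of $E$ to $\{0\}\times S^1$ is the trivial bundle coming from $q\circ\ell$, Theorem 9.8 of \cite[Ch.~4]{Husemoller:1994} furnishes an isomorphism $E\cong I\times S^1\times G$, whence $F$ is given by an equivariant extension $\tilde F:I\times S^1\to X$ with $\tilde F(1,t)\equiv p$.

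The delicate point is that the pointed identifications at the boundary and along the basepoint line $I\times\{1\}$ need not agree on the nose with this trivialization, but differ by gauge factors. Comparing these factors produces a map $\gamma:(S^1,1)\to (G,1)$ such that, after applying the resulting gauge transformation, $\tilde F(0,t)=\ell(t)\cdot\gamma(t)^{-1}$. Because $\gamma$ bounds a continuous family of gauge factors filling the disc $I\times S^1$ (the nullhomotopy itself), it is nullhomotopic in $G$ and therefore lands in $G_0$. The path $s\mapsto\tilde F(s,\cdot)$ then gives a homotopy in $X$ from $\ell$ to $\iota\circ\gamma$, placing $[\ell]$ in $\iota_*(\pi_1(G_0,1))$. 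The hard part will be bookkeeping the bundle trivializations, the basepoint $2$-isomorphisms, and the equivariant maps simultaneously to extract $\gamma$ cleanly and verify it has all the required properties.
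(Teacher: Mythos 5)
Your proposal is correct and follows essentially the same route as the paper: the same gauge-transformation argument for $\iota_*(\pi_1(G_0))\subseteq\ker q_*$, the same use of triviality of $G$-bundles over $I\times S^1$ (Husemoller) to extract a based loop $\gamma$ in $G$ (hence in $G_0$) from a nullhomotopy, and the same path-from-$p$-to-$p\cdot g^{-1}$ construction for surjectivity onto $G/G_0$. The only differences are cosmetic: you also sketch exactness at $\pi_1([X/G],x)$, which the paper leaves to the reader, and your appeal to nullhomotopy of $\gamma$ in $G$ is unnecessary---connectedness of $S^1$ and $\gamma(1)=1$ already place $\gamma$ in $G_0$, which is how the paper (after normalizing by $L_{\beta(1)^{-1}}$) concludes.
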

\begin{proof}
The verification is straightforward. We prove exactness at $\pi_1(X,p)$ and exactness at $G/G_0$,  leaving the rest for the reader.  Recall first that the trivial element in $\pi_1([X/G],x)$ may be represented by the pair (as in Remark \ref{remark:representingmaps}) 
$
(S^1 \leftarrow S^1 \times G \stackrel{\sigma}{\longrightarrow} X),
$
where $\sigma(z,g) = p\cdot g$.

We  show that the composition of the first two maps in the sequence is trivial.  Let $\alpha\colon S^1 \to G_0$ represent an element of $\pi_1(G_0,1)$, whence its image  via the composition of the first two maps in the sequence is represented by the pair $(S^1\leftarrow S^1 \times G \stackrel{a}{\longrightarrow} X)$ where  $a(z,g) = p\cdot (\alpha(z)g)$. The map $\alpha$ determines a gauge transformation  $\phi:S^1 \times G \to S^1 \times G$ defined by $\phi(z,g)=(z,\alpha(z)g)$. Since $\sigma \circ \phi =a$ we see that $\phi$ determines a 2-isomorphism of  the maps $q\circ \iota \circ \alpha$ and the constant map to $[X/G]$.  By Lemma \ref{lemma:homotopy class independent of 2-morphism} this induces a homotopy, verifying that the composition of the first two maps is trivial. 

Next, suppose that $f:S^1 \to X$ is a pointed map representing a class in $\pi_1(X,p)$ whose image via $q_*$ in $\pi_1([X/G],x)$ is trivial.  We will show that $f$ is homotopic to a composition $\iota \circ \beta$ for some pointed map $\beta:S^1 \to G_0$. 
Suppose that $q\circ f$ is homotopic to the constant map $S^1 \to \star \stackrel{x}{\longrightarrow} [X/G]$.  Let $( I\times S^1 \leftarrow E_H \stackrel{h}{\longrightarrow} X)$ be a pair of maps representing the homotopy $H:I\times S^1 \to [X/G]$.  That $E_H$ represents a homotopy implies that there exists a trivialization $\epsilon_0: S^1 \times G \to E_{H_0}= E_H|_{0\times S^1}$ fitting in the commutative diagram,
$$
 \xymatrix@R=3em{
& & & X \\
{S^1 \times G}\ar@/^1pc/[urrr]^(.4){(z,g) \mapsto f(z) \cdot  g \phantom{X}} \ar[dr] \ar[r]^-{\epsilon_{0}}& E_{H_0} \ar[r] \ar[d] & E_H \ar[d] \ar@/_/[ur]_{h} \\
 & {S^1} \ar[r]^-{\varepsilon_0} & {I \times S^1 } \\
 }
$$
where $\varepsilon_0(z) = (0,z)$
Note that by Theorem 9.8 in \cite[Ch. 4]{Husemoller:1994}, the bundle $E_H \to I \times S^1$ is trivializable.  Moreover, we may choose a section $s$ so that the composition
$$
S^1 \stackrel{s\varepsilon_0}{\longrightarrow} E_{H_0} \stackrel{\epsilon_0^{-1}}{\longrightarrow} S^1 \times G
$$
is simply inclusion $z\mapsto (z,1)$. 

Since $E_H$ represents a homotopy, there exists a trivialization $\epsilon_1:E_{H_1}=E_H|_{1\times S^1} \to S^1 \times G$.  Note that $\epsilon_1\circ s\circ \varepsilon_1 (z) = (z,\beta(z))$ for some loop $\beta:S^1 \to G$, and that $\beta(1) \in \Stab(p)$ because the homotopy $H$ is a homotopy of pointed maps.  By replacing the 2-isomorphism 
$\epsilon_1:E_{H_1} \to S^1 \times G$ (that is part of the data of the homotopy $H$)
with the composition of  2-isomorphisms below if necessary,
$$
\xymatrix@R=3em{
X & & & \\
&{E_{H_1}}\ar[dr] \ar@/^/[ul] \ar[r]^{\epsilon_1} & {S^1 \times G}\ar@/_/[ull]_{\sigma} \ar[d] \ar[r]^{L_{\beta(1)^{-1}}} & {S^1\times G}\ar@/_2pc/[ulll]_{\sigma} \ar[dl] \\
& & S^1 & 
}
$$
(where  $L_{u}$ denotes the gauge transformation $(z,g) \mapsto (z,ug)$) we may assume that $\beta(1)=1$.

The map $\tau=h\circ s:I\times S^1 \to X$ is the desired homotopy.  It is readily verified that $\tau(0,z)=f(z)$, and that  $\tau(1,z) = p\cdot \beta(z) = (\iota \circ \beta) (z)$, as required.  

Finally, suppose $E(g) \to S^1$ represents an
element of $\pi_1(BG,*) \cong G/G_0$. It suffices to construct an equivariant
map $E(g) \to X$. Choose a point $z \in X$ and a path $\gamma:I \to X$ with
$\gamma(0)=z$ and $\gamma(1)=z\cdot g^{-1}$. The map $I \times G \to X$
given by $(t,h) \mapsto \gamma(t)\cdot h$ descends to $E(g)$ and is
$G$-equivariant.
\end{proof}

\begin{corollary} \label{cor:orb-pi-one when X simply connected}
Let $G$ be a Lie group acting smoothly on a  simply connected
manifold $X$. The fundamental group $\pi_1([X/G],x) \cong G/G_0$. In particular, if in addition $G$ is
connected, then $\pi_1([X/G],x) $ is trivial.
\end{corollary}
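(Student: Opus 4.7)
The plan is to deduce this corollary directly from the exact sequence \eqref{eq:pi1 exact sequence} established in the preceding Proposition, namely
\[
\pi_1(G_0,1) \to \pi_1(X,p) \to \pi_1([X/G],x) \to G/G_0 \to 1.
\]
Since $X$ is simply connected by hypothesis, the group $\pi_1(X,p)$ is trivial. Substituting this into the sequence collapses the first three terms and yields the shorter exact sequence
\[
1 \to \pi_1([X/G],x) \to G/G_0 \to 1,
\]
from which I would read off the desired isomorphism $\pi_1([X/G],x) \cong G/G_0$. I should remark that one needs the preceding portion of the sequence (the map $\pi_1(X,p) \to \pi_1([X/G],x)$) to be exact at $\pi_1([X/G],x)$ in order to conclude injectivity of $\pi_1([X/G],x) \to G/G_0$; this is precisely the content of the Proposition.

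For the second assertion, I would simply observe that if $G$ is assumed connected then $G = G_0$, so $G/G_0$ is the trivial group. Combining this with the isomorphism just established gives $\pi_1([X/G], x) = 1$.

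There is essentially no obstacle here, as the argument is a direct application of the Proposition; the only point requiring care is ensuring that the preceding exactness statements are invoked correctly. Since the heavy lifting (constructing representatives of classes in $\pi_1(BG, \ast)$, verifying exactness at each term, and handling the 2-isomorphism data for pointed homotopies) has already been carried out in the Proposition, the corollary is purely formal.
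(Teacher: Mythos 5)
Your proposal is correct and is exactly how the paper intends the corollary to be read: it follows formally from the exact sequence \eqref{eq:pi1 exact sequence} once $\pi_1(X,p)$ is trivial, using exactness at $\pi_1([X/G],x)$ for injectivity and exactness at $G/G_0$ for surjectivity. Your remark about needing exactness at the middle term is the right point of care, since the paper's proof of the Proposition only writes out exactness at $\pi_1(X,p)$ and at $G/G_0$ explicitly and leaves that portion to the reader.
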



\begin{proposition}\label{proposition:quotient homomorphism}
  Under the identifications $I_x \cong \Stab(p)$ and $\pi_1(BG,*) \cong
G/G_0$ given above, the composition $\Stab(p) \cong I_x
  \stackrel{\omega_x}{\longrightarrow} \pi_1([X/G],x) \to G/G_0$ is the
  natural  homomorphism $ \phi_p:\Stab(p) \hookrightarrow G
  \to G/G_0$.
\end{proposition}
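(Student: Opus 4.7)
The proof is essentially a bookkeeping exercise, tracing the definitions and identifications through the relevant diagrams; the main work has already been done in establishing the explicit formulas for $\omega_x$ in terms of the bundles $E(g_\alpha)$ and for $\sigma$ in Lemma~\ref{lemma:fundamental group of BG}. My plan is to fix $\alpha \in I_x$ and verify by direct chase that its image under the stated composition is $g_\alpha G_0$.

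First, recall that under the identification $I_x \cong \Stab(p)$, an element $\alpha$ corresponds to $g_\alpha = \alpha(1) \in \Stab(p) \subset G$. By the explicit construction preceding Lemma~\ref{lemma:fundamental group of BG}, the class $\omega_x(\alpha) \in \pi_1([X/G], x)$ is represented (in the sense of Remark~\ref{remark:representingmaps}) by the pair
\[
\bigl( S^1 \leftarrow E(g_\alpha) \to X \bigr),
\]
where $E(g_\alpha) = (I \times G)/\!\!\sim$ as in \eqref{eq:definition E alpha}, and the equivariant map is $[(t,g)] \mapsto p \cdot g$, together with the fiber identification $G \to E(g_\alpha)|_{\{1\}}$, $h \mapsto [(0,h)]$.

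Next, I would apply the natural pointed morphism $[X/G] \to BG$ that forgets the equivariant map to $X$. At the level of representatives this simply drops the map to $X$, sending $\omega_x(\alpha)$ to the class in $\pi_1(BG, *)$ represented by the principal $G$-bundle $E(g_\alpha) \to S^1$ together with the same fiber identification $G \to E(g_\alpha)|_{\{1\}}$. By the very definition of $\sigma$ in the proof of Lemma~\ref{lemma:fundamental group of BG}, this is precisely $\sigma(g_\alpha G_0)$.

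Composing with $\sigma^{-1}: \pi_1(BG,*) \to G/G_0$ then yields $g_\alpha G_0$, which is exactly the image of $g_\alpha \in \Stab(p)$ under $\phi_p: \Stab(p) \hookrightarrow G \to G/G_0$. Thus the composition in the statement agrees with $\phi_p$, completing the proof. No step here is a genuine obstacle: the content is entirely in matching the representatives, which the explicit formulas for $\omega_x$, $\sigma$, and the forgetful morphism make straightforward.
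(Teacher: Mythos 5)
Your proof is correct and is essentially the same as the paper's: both trace $\omega_x(\alpha)$ to the pointed bundle $E(g_\alpha)\to S^1$, push it through the forgetful map to $BG$, and match it with the representative $\sigma(g_\alpha G_0)$ from the proof of Lemma \ref{lemma:fundamental group of BG}. The paper simply states this in one sentence, whereas you spell out the intermediate identifications explicitly.
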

\begin{proof}
For $g \in \Stab(p)$, the image of $\omega_x(g)$ in $\pi_1(BG,*)$ is represented
by the bundle $E(g)$ (in the notation above), which by the proof of Lemma
\ref{lemma:fundamental group of BG} is represented by the coset $g G_0$.
\end{proof}

\section{Global quotients and universal coverings} \label{section:quotients and covers}

A main purpose of this paper is to determine conditions under which a Deligne-Mumford stack is a global quotient in the sense of the following definition.

\begin{definition} \label{def:discrete/global quotient}
A  Deligne-Mumford stack $\mathcal{X}$ is a \emph{discrete} (resp. \emph{global}) \emph{quotient} if $\mathcal{X}$ is equivalent to a quotient stack $[Y/\Gamma]$, where $\Gamma$ is a discrete (resp. finite) group acting   on a smooth manifold $Y$.
\end{definition} 

Recall that a morphism of representable stacks $\mathcal{X} \to \mathcal{Y}$ is an \emph{equivalence} if it is an equivalence of categories.  An equivalence may also be represented in terms of the corresponding representing groupoids as a principal bi-bundle (e.g. see  Definition 3.25 and Remark 3.33 in \cite{Lerman:2010}).  

We shall deal only with the special case of quotient stacks $\mathcal{X}=[X/G]$ arising from a smooth proper action of a   Lie group $G$ on a connected manifold $X$.  Note that an equivalence of such a pair of quotient stacks $[X/G] \to [Y/H]$ may then be represented as a bi-bundle of the action groupoids $X\times G \rightrightarrows X$ and $Y\times H \rightrightarrows Y$, which in this case amounts to a $G\times H$-space $P$ that is simultaneously a principal $G$-bundle $P\to Y$ (with $H$-equivariant projection) and a principal $H$-bundle $P\to X$ (with $G$-equivariant projection).

\subsection{Global quotients and the fundamental group}

The following Lemma provides a natural setting to discuss a class of examples of quotient stacks that are equivalent to global quotients.

\begin{lemma}  \label{lemma:quotient in stages}
Let $ 1 \to H \to G \to \Gamma \to 1 $
be an exact sequence of  topological (respectively, Lie) groups. 
Suppose that  $G$ acts on a topological space (resp.  smooth manifold) $X$ and that the restriction of this action to $H$ is  free (resp. free and proper).  
Then $[X/G]$ and $[(X/H)/\Gamma]$ are
equivalent as stacks over $\Top$ (resp. $\mathsf{Diff}$). 
\end{lemma}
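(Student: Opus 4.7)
The plan is to construct explicit quasi-inverse morphisms of stacks $\Phi \colon [X/G] \to [(X/H)/\Gamma]$ and $\Psi \colon [(X/H)/\Gamma] \to [X/G]$, thereby producing the desired equivalence. The functor $\Phi$ implements ``descent along $H$'': given an object $(B \leftarrow E \stackrel{f}{\to} X)$ of $[X/G]$, I send it to $(B \leftarrow E/H \stackrel{\bar{f}}{\to} X/H)$. Since $H$ is a closed normal subgroup acting freely on the fibres of $E \to B$, the quotient $E/H \to B$ is naturally a principal $\Gamma$-bundle; and since $H$ acts freely and properly on $X$ by hypothesis, the $H$-equivariant map $f$ descends to a $\Gamma$-equivariant map $\bar{f} \colon E/H \to X/H$. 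Morphisms in $[X/G]$ descend in the obvious way, making $\Phi$ a well-defined morphism of stacks.

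In the reverse direction, $\Psi$ implements a pullback/extension-of-structure-group construction. Given $(B \leftarrow F \stackrel{\bar{g}}{\to} X/H)$ in $[(X/H)/\Gamma]$, I would set $E := F \times_{X/H} X$ and define a right $G$-action on $E$ by $(y,x) \cdot g := (y \cdot \bar{g}, x \cdot g)$, where $\bar{g}$ denotes the image of $g$ in $\Gamma$; this is well-defined because $F \to X/H$ is $\Gamma$-equivariant and the quotient $X \to X/H$ intertwines the $G$-action on $X$ with the induced $\Gamma$-action on $X/H$. Projection to the second factor gives a $G$-equivariant map $E \to X$, and the composition $E \to F \to B$ will be a principal $G$-bundle. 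The natural isomorphism $\Phi \circ \Psi \simeq \mathrm{id}$ is furnished by the canonical map $(F \times_{X/H} X)/H \to F$, which is an isomorphism precisely because $X \to X/H$ is a principal $H$-bundle; while $\Psi \circ \Phi \simeq \mathrm{id}$ is witnessed by the $G$-equivariant isomorphism $E \to (E/H) \times_{X/H} X$, $e \mapsto ([e], f(e))$, which is an isomorphism by direct inspection of fibres.

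The main technical obstacle is verifying that $E = F \times_{X/H} X$ in the construction of $\Psi$ genuinely carries a principal $G$-bundle structure over $B$. Freeness of the $G$-action follows from the free $H$-action on the fibres of $E \to F$ together with principality of $F \to B$, and properness in the Lie setting is inherited from the properness of the $H$-action on $X$ and of the $\Gamma$-action on $F$. The nontrivial ingredient is local triviality: combining a local trivialisation of the $\Gamma$-bundle $F \to B$ with a local section of the surjection $G \to \Gamma$ near the identity, one produces a local identification of $E|_U$ with $U \times G$ as a $G$-space. Such a section is guaranteed in the Lie setting, where $G \to \Gamma$ is a surjective submersion, and is standard for the topological group extensions under consideration. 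Smoothness of the $G$-action on $E$, when applicable, is inherited from the smooth actions on $X$ and on $F$. Once this structural fact is in place, the remaining functoriality of $\Phi$ and $\Psi$, together with the naturality of the two isomorphisms above, reduce to routine diagram chases.
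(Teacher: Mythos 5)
Your proposal is correct and follows essentially the same route as the paper: descent along $H$ in one direction, the fibre product $F \times_{X/H} X$ with the twisted $G$-action in the other, and the same two canonical isomorphisms as the natural transformations. The only difference is cosmetic---you verify principality of $F \times_{X/H} X \to B$ via local triviality using a local section of $G \to \Gamma$, whereas the paper checks freeness and fibrewise transitivity of the $G$-action directly; your treatment of this point is, if anything, slightly more explicit.
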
 
\begin{proof}
We shall work over $\Diff$, noting that the proof is the same over $\Top$.
Since the $H$-action is free and proper, the orbit space $X/H$ is indeed a smooth manifold.
Define $F:[X/G] \to [(X/H)/\Gamma]$ to be the functor defined by the
assignment
$$
( B\leftarrow  E\to X) \mapsto ( B\leftarrow E/H \to X/H)
$$ 
on objects, and 
$$
\xymatrix@C=1.5em@R=1.5em{
 & & {X} \\
 {E} \ar@/^/[urr] \ar[r] \ar[d] & {E'} \ar[d] \ar@/_/[ur] & \\
 {B} \ar[r] & {B'} & \\
}\quad
\xymatrix@C=1.5em@R=1.5em{
  &  \\
 \mapsto  & \\
  & \\
}
\xymatrix@C=1.5em@R=1.5em{
 & & {X/{H}} \\
 {E/{H}} \ar@/^/[urr] \ar[r] \ar[d] & {E'/{H}} \ar[d] \ar@/_/[ur] & \\
 {B} \ar[r] & {B'} & \\
}
$$
on arrows. By construction, $F$ commutes with the projections to the
base category $\mathsf{Diff}$.  We wish to show that $F$ is an
equivalence of categories. To see this, we define a functor $K:
[(X/H)/\Gamma] \to [X/G]$ as follows. 
Suppose given a pair $(B\leftarrow P\to X/H)$ consisting
of a $\Gamma$-bundle $P\to B$ and a $\Gamma$-equivariant map $P\to X/H$.  Let
$E:=P\times_{X/H} X$ be the fiber product, and define a $G$-action on $E$ by
setting $(p,z)\cdot g = (p\cdot \rho(g), z\cdot g)$, where $\rho:G\to
\Gamma$ denotes
the given map in the exact sequence.  
We claim that the composition 
$E\stackrel{\operatorname{pr}_1}{\longrightarrow} P \to B$ 
is a principal $G$-bundle. Indeed suppose $(p, z)\cdot g = (p \rho(g),
zg) = (p,z)$. Then since $P$ is a principal $\Gamma$-bundle, $\rho(g)$
is the identity element in $\Gamma$ and hence $g \in H$. On the other
hand, by assumption $H$ acts freely on $X$ so $zg=z$ implies
$g=\textup{id}$ in $G$. Hence $G$ acts freely on $E$. Next suppose
$(p,z) \in E, (p', z') \in E$ map to the same point $b$ in $B$. Then
since $P$ is a principal $\Gamma$-bundle over $B$, there exists $g \in
G$ such that $p' = p \rho(g)$. By definition of the fiber product $E =
P \times_{X/H} X$, the equivariance of the map $P \to X/H$, and
normality of $H$, we conclude there exists $h \in H$ such that $z' = z
hg$. Since $\rho(hg)=\rho(g) \in \Gamma$ we conclude $(p,z)hg = (p',
z')$ and that $G$ acts transitively on fibers of $E \to B$. Hence $E
\to B$ is a principal $G$-bundle, as desired. The projection map $E \to
X$ is $G$-equivariant by construction so $(B\leftarrow E \to X)$ is an
object in $[X/G]$. 
Given an arrow 
$$
\xymatrix@C=1.5em@R=1.5em{
 & & {X/{H}} \\
 {E/{H}} \ar@/^/[urr] \ar[r] \ar[d] & {E'/{H}} \ar[d] \ar@/_/[ur] & \\
 {B} \ar[r] & {B'} & \\
}
$$
in $[(X/H)/\Gamma]$, it induces a unique arrow
$$
\xymatrix@C=1.5em@R=1.5em{
 & & {X} \\
 {E/{H}\times_{X/H} X} \ar@/^/[urr] \ar[r] \ar[d] & {E'/{H}\times_{X/H}
X} \ar[d] \ar@/_/[ur] & \\
 {B} \ar[r] & {B'} & \\
}
$$
which defines the functor $K$ on morphisms. Again by construction $K$
commutes with projection to the base category. 

Finally, we sketch the constructions of the natural transformations
between $F \circ K$ (resp. $K \circ F$) and the identity functor on
$[(X/H)/\Gamma]$ (resp. $[X/G]$). For $K \circ F$, observe 
that for any object $(B\leftarrow E\to X)$ in $[X/G]$, there is a unique
isomorphism
$$
\xymatrix@C=1.5em@R=1.5em{
 & & & {X} \\
 {E} \ar@/^/[urrr] \ar[rr] \ar[dr] & & {{E/H}\times_{X/H} X}
\ar[dl] \ar@/_/[ur] & \\
 & B &  & \\
}
$$
in $[X/G]$.  For $F\circ K$, observe that $(P \times_{X/H} X)/H$ is
isomorphic to $P$ via the map $[(p,z)] \mapsto p$. From here it is 
straightforward to check that these yields the desired natural
transformations. 
\end{proof}

Observe that the requirement in Lemma \ref{lemma:quotient in stages} that the $G$-action restricted to $H$ be a free $H$-action is necessary over both $\Diff$ and $\Top$. (Compare with Proposition \ref{prop:stackquotientstages}.) For example, consider the exact sequence
$$
1 \to \Z_2 \to S^1 \stackrel{2}{\longrightarrow} S^1 \to 1
$$
where $2:S^1 \to S^1$ denotes the squaring map.  Let $t\in G=S^1$ act on $X=S^3 \subset \C^2$ according to $t\cdot(z,w)=(t^2z, t^2w)$.  The quotient stack $[X/G]$ is the weighted projective space $\mathbb{P}(2,2)$.  The restriction of the $G$-action to $H=\Z_2=\{\pm 1\}$ is trivial, and the resulting residual action of $\Gamma =S^1$ on $X/H=X=S^3$ is the standard action of $S^1$ on $S^3$ giving the quotient $[(X/H)/\Gamma]=\mathbb{P}(1,1)$, the complex projective plane.  As  a stack, $\P(2,2)$ has a non-trivial  inertia group isomorphic to $\Z_2$ at each point and is thus not equivalent to the smooth manifold $\P(1,1)$.

If $G$ is compact, applying the above Lemma to the case $H=G_0$, the
connected component of the identity element, provides a natural class
of examples of stacks equivalent to global quotients. (Since $G$ is
compact, the quotient $G/G_0$ is automatically a finite group.)  

\begin{corollary}\label{cor:quotient in stages}
Suppose a Lie group $G$ acts smoothly and properly on a connected 
smooth manifold $X$. Let $\Gamma$ denote the (discrete)
group $G/G_0$ of $G$. If the restriction of the $G$-action to $G_0$ is free, then
$[X/G]$ and $[(X/G_0)/\Gamma]$ are equivalent as 
stacks over $\mathsf{Diff}$ and hence $[X/G]$ is a discrete quotient.  If in addition $G$ is compact, then $[X/G]$ is a global quotient. 
\end{corollary}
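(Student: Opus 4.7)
The strategy is a direct application of Lemma~\ref{lemma:quotient in stages} to the short exact sequence of Lie groups
\[
1 \to G_0 \to G \to G/G_0 \to 1,
\]
which makes sense since the identity component $G_0$ is a closed normal subgroup of $G$ and the quotient $G/G_0$ is discrete (being the component group of a Lie group). To invoke the lemma I need the restricted $G_0$-action on $X$ to be free and proper. Freeness is given as part of the hypothesis, and properness of the $G_0$-action follows immediately from properness of the ambient $G$-action together with the fact that $G_0 \subset G$ is closed (so pulling back the proper map $G \times X \to X \times X$ along $G_0 \times X \hookrightarrow G \times X$ yields a proper map).

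With these hypotheses in place, Lemma~\ref{lemma:quotient in stages} produces an equivalence of stacks $[X/G] \simeq [(X/G_0)/\Gamma]$ over $\mathsf{Diff}$, where $\Gamma := G/G_0$. Since the $G_0$-action is free and proper, the orbit space $Y := X/G_0$ carries a canonical smooth manifold structure, and the induced $\Gamma$-action on $Y$ is smooth. Because $\Gamma$ is discrete, this exhibits $[X/G]$ as equivalent to a quotient of a smooth manifold by a discrete group, i.e., as a discrete quotient in the sense of Definition~\ref{def:discrete/global quotient}.

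For the final assertion, suppose further that $G$ is compact. Then $\Gamma = G/G_0$ is a continuous image of a compact space, hence compact; being simultaneously compact and discrete forces $\Gamma$ to be finite. The equivalence $[X/G] \simeq [Y/\Gamma]$ then exhibits $[X/G]$ as a global quotient.

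There is no genuine obstacle in the argument, since the lemma does all the heavy lifting; the only point requiring a moment's care is verifying that the $G$-action restricted to the closed subgroup $G_0$ remains proper, and that discreteness plus compactness of $\Gamma$ upgrades ``discrete quotient'' to ``global quotient.''
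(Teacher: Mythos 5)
Your proposal is correct and follows the paper's argument exactly: the paper likewise obtains the corollary by applying Lemma~\ref{lemma:quotient in stages} to the exact sequence $1 \to G_0 \to G \to G/G_0 \to 1$, with compactness of $G$ forcing $G/G_0$ to be finite. Your added check that properness of the $G$-action descends to the closed subgroup $G_0$ is a worthwhile detail the paper leaves implicit.
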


If in addition $X$ is simply connected, then we shall see in Theorem \ref{thm:TFAE} that the above examples characterize global quotients among quotient stacks.  
Proposition \ref{proposition:separated stabilizers} below illustrates how the freeness of the  $G_0$-action  on $X$ relates to the fundamental group $\pi_1([X/G],x)$.

\begin{proposition}\label{proposition:separated stabilizers}
Suppose a  Lie group $G$ acts smoothly on a connected 
smooth manifold $X$. If the restriction of the $G$-action to $G_0\subset G$ is free, then the homomorphism $\omega_x$ is injective for all points $x$ in $[X/G]$.  Moreover, if $X$ is simply connected, the converse holds as well.
\end{proposition}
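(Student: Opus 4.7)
The plan is to leverage Proposition~\ref{proposition:quotient homomorphism} together with Corollary~\ref{cor:orb-pi-one when X simply connected} to reduce injectivity of $\omega_x$ to a straightforward statement about stabilizers and $G_0$. The key observation is that, under the identification $I_x \cong \Stab(p)$, the inertia homomorphism $\omega_x$ fits into a factorization
\[
\Stab(p) \cong I_x \xrightarrow{\omega_x} \pi_1([X/G], x) \to G/G_0,
\]
whose composite is the tautological map $\phi_p: \Stab(p) \hookrightarrow G \twoheadrightarrow G/G_0$. Since the kernel of $\phi_p$ is precisely $\Stab(p) \cap G_0$, this group being trivial is equivalent to the $G_0$-action being free at $p$.

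For the forward direction, I would argue as follows: if $G_0$ acts freely on $X$, then for every $p \in X$ we have $\Stab(p) \cap G_0 = \{1\}$, so $\phi_p$ is injective. Because $\omega_x$ is a factor in the composition yielding the injective map $\phi_p$, it follows that $\omega_x$ itself must be injective. Since every point $x \in [X/G]$ arises from some $p \in X$ (with corresponding identification $I_x \cong \Stab(p)$), this handles all basepoints simultaneously.

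For the converse, assume in addition that $X$ is simply connected. By Corollary~\ref{cor:orb-pi-one when X simply connected}, the map $\pi_1([X/G],x) \to G/G_0$ in the factorization is an \emph{isomorphism}, so injectivity of $\omega_x$ is equivalent to injectivity of $\phi_p$. Given any $p \in X$, inject\-ivity of $\phi_p$ is precisely the statement $\Stab(p) \cap G_0 = \{1\}$, i.e.\ the isotropy group of $p$ under the $G_0$-action is trivial. Running this over all $p \in X$ shows that $G_0$ acts freely on $X$.

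I do not anticipate a significant technical obstacle here: the bulk of the work has already been done in Lemma~\ref{lemma:fundamental group of BG}, Proposition~\ref{proposition:quotient homomorphism}, and Corollary~\ref{cor:orb-pi-one when X simply connected}. The only subtlety worth flagging is the role of the simple-connectedness hypothesis in the converse: without it, the map $\pi_1([X/G], x) \to G/G_0$ need not be injective (there is potential contribution from $\pi_1(X,p)$ via the exact sequence~\eqref{eq:pi1 exact sequence}), and so injectivity of $\omega_x$ could fail to detect elements of $\Stab(p) \cap G_0$ that bound in $X$. Ensuring this point is explicit in the write-up is the main thing to be careful about.
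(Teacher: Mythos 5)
Your proof is correct and follows essentially the same route as the paper's: the forward direction uses the factorization of $\phi_p$ through $\omega_x$ from Proposition~\ref{proposition:quotient homomorphism}, and the converse uses the isomorphism $\pi_1([X/G],x)\cong G/G_0$ from Corollary~\ref{cor:orb-pi-one when X simply connected} to conclude $\ker\omega_x=\Stab(p)\cap G_0$. Your closing remark about why simple-connectedness is needed for the converse is a nice explicit touch that the paper leaves implicit.
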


\begin{proof} 
  From Proposition~\ref{proposition:quotient homomorphism} we know that the
  composition of the homomorphism $\omega_x$ with the second arrow
  in~\eqref{eq:pi1 exact sequence} is  the natural homomorphism
  $\phi_p:\Stab(p)  \to G/G_0$ obtained as the composition of the
  natural inclusion $\Stab(p) \into G$ with the canonical quotient map $G \to G/G_0$.  If the restriction of the $G$-action to $G_0$ is free, then $\phi_p$ is
  injective for all $p$ and hence $\omega_x$ is injective for all $x$ in $[X/G]$. This proves the first claim. 

  For the second claim, if $X$ is simply connected then 
  by Corollary~\ref{cor:orb-pi-one when X simply connected}
  $\pi_1([X/G],x) \cong \pi_1(BG,*) \cong G/G_0$ and $\ker \omega_x = \Stab(p) \cap G_0$.  Therefore, if $\omega_x$ is injective for all $x$ in $[X/G]$ then $G_0$ acts freely on $X$.
  \end{proof}

\subsection{On covers of quotient stacks}
Lemma \ref{lemma:quotient in stages} may be generalized to the context
of group actions on stacks, which then fits nicely with covering
theory.  In preparation for the statement of Proposition~\ref{prop:stackquotientstages}, we begin with a summary of some ideas found in the work of
Lerman and Malkin \cite{LermanMalkin2009}, which the reader should
consult for details. 

For a Lie group $\Lambda$, recall that a $\Lambda$-action on a stack $\calX$ can
be encoded using a \emph{$\Lambda$-presentation}, a groupoid presentation
$\mathcal{G}~=\mathcal{G}_1 \rightrightarrows\mathcal{G}_0$ of $\calX$
equipped with smooth and free $\Lambda$-actions on both the manifold of
arrows $\mathcal{G}_1$ and the manifold of objects $\mathcal{G}_0$
that is compatible with the structure maps of the groupoid
$\mathcal{G}$.

Towards generalizing Lemma \ref{lemma:quotient in stages}, suppose that 
$$
1 \to H \into G \to \Gamma \to 1 
$$
is an exact sequence of Lie groups and that $G$ acts properly and smoothly on a manifold $X$.  The exact sequence above naturally defines a $G$-action on $X\times \Gamma$ and the translation groupoid 
\begin{equation}
G\times (X\times \Gamma) \rightrightarrows X\times \Gamma \label{eqn:presentation}
\end{equation}
is then a groupoid presentation for the quotient stack $[(X\times \Gamma) /G]$.  As in \cite[Section 4.1]{LermanMalkin2009}, since translation by $\Gamma$ commutes with the above $G$-actions, we see that (\ref{eqn:presentation}) is a $\Gamma$-presentation.  By \cite[Proposition 4.2]{LermanMalkin2009} (\ref{eqn:presentation}) is also a $\Gamma$-presentation for the quotient stack $[X/H]$.  This gives a $\Gamma$-action on $[X/H]$, which (see \cite[Section 3.3]{LermanMalkin2009}) shows that  the translation groupoid $G\times X \rightrightarrows X$
is a groupoid presentation of the stack quotient  $[X/H]/\Gamma$.
This verifies the following generalization of Lemma \ref{lemma:quotient in stages}.

\begin{proposition} \label{prop:stackquotientstages}
Let $ 1 \to H \to G \to \Gamma \to 1 $
be an exact sequence of Lie groups. 
Suppose that  $G$ acts properly and smoothly on a  smooth manifold $X$.  Then the quotient stack $[X/H]$ inherits a $\Gamma$-action; moreover,   $[X/G]$ and $[X/H]/\Gamma$ are
equivalent as stacks over  $\mathsf{Diff}$.
\end{proposition}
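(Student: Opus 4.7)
The plan is to apply the Lerman-Malkin framework for group actions on stacks: a $\Gamma$-action on a stack $\calY$ is encoded by a \emph{$\Gamma$-presentation}, namely a groupoid presentation of $\calY$ equipped with compatible smooth, free $\Gamma$-actions on both arrow and object manifolds. The strategy is to construct a single Lie groupoid that simultaneously presents $[X/H]$ and carries a natural free $\Gamma$-action whose quotient presents $[X/G]$.

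To set this up, I would first introduce an auxiliary $G$-action on $X \times \Gamma$ by $g \cdot (x,\gamma) = (g \cdot x, \rho(g)\gamma)$, where $\rho : G \to \Gamma$ denotes the quotient homomorphism from the given exact sequence. This action is proper, so the translation groupoid
\begin{equation*}
G \times (X \times \Gamma) \rightrightarrows X \times \Gamma
\end{equation*}
presents the quotient stack $[(X \times \Gamma)/G]$. The key observation is that this stack is equivalent to $[X/H]$, realized by the slice $X \hookrightarrow X \times \Gamma$, $x \mapsto (x, 1)$: every $G$-orbit in $X \times \Gamma$ meets this slice, and the residual action on it is exactly the $H$-action --- this is the content cited from \cite[Proposition 4.2]{LermanMalkin2009}. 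I would then equip this groupoid with a commuting $\Gamma$-action given by right translation on the $\Gamma$-factor, namely $\gamma' \cdot (x, \gamma) = (x, \gamma (\gamma')^{-1})$ on objects, extended trivially on the $G$-component of arrows. A direct check shows this action is smooth, free, commutes with the $G$-action, and is compatible with source, target, multiplication, and inversion, producing a $\Gamma$-presentation in the sense of Lerman-Malkin. This is the $\Gamma$-action inherited by $[X/H]$.

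Finally, to identify $[X/H]/\Gamma$ with $[X/G]$, I would pass to the quotient groupoid obtained by dividing both object and arrow manifolds by $\Gamma$. Since $\Gamma$ acts freely by translation on the $\Gamma$-factor alone, the quotients are canonically $(X \times \Gamma)/\Gamma \cong X$ and $\bigl(G \times (X \times \Gamma)\bigr)/\Gamma \cong G \times X$, and the residual source, target, and multiplication maps assemble precisely into the translation groupoid $G \times X \rightrightarrows X$, which presents $[X/G]$. The main subtlety I anticipate is the bookkeeping for the compatibility axioms defining a $\Gamma$-presentation and verifying that passage to quotient groupoids indeed yields the stack quotient; beyond these essentially formal checks, the argument is a matter of selecting the right auxiliary groupoid $G \times (X \times \Gamma) \rightrightarrows X \times \Gamma$ and reading off its two descriptions.
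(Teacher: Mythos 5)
Your proposal is correct and follows essentially the same route as the paper: both introduce the auxiliary $G$-action on $X\times\Gamma$ via the quotient homomorphism, use the translation groupoid $G\times(X\times\Gamma)\rightrightarrows X\times\Gamma$ as a $\Gamma$-presentation of $[X/H]$ (citing \cite[Proposition 4.2]{LermanMalkin2009}), and identify the quotient groupoid with $G\times X\rightrightarrows X$ presenting $[X/G]$. The only difference is that you spell out the slice $x\mapsto(x,1)$ and the explicit $\Gamma$-translation formula where the paper defers these checks to \cite{LermanMalkin2009}.
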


As in previous discussions, we wish to interpret the above Proposition in the case $H=G_0$.  This interpretation may be placed in the context of covering theory for stacks \cite{Noohi:2005}.  In particular, we shall see in Proposition \ref{prop:covering} that the natural map $p:[X/G_0] \to [X/G]$ is a covering projection.  (More generally, there is a natural map $[X/H] \to [X/G]$, given by the associated bundle construction, which is representable by Lemma \ref{lemma:associatedbundle} below.)   In other words, we may view $p$ as a quotient map, for $[X/G] \cong [X/G_0]/\Gamma$ where $\Gamma$ is the discrete group $G/G_0$.

Parallel to classical covering space theory, one may define universal covering projections.  For simplicity, we shall define a \emph{universal covering projection} $(\tilde{\calX},\tilde{x})\to(\calX,x)$,  as a covering projection with $\pi_1(\tilde{\calX},\tilde{x})=\{1\}$
  (Cf. \cite[Corollary 18.20]{Noohi:2005}).

\begin{lemma} \label{lemma:associatedbundle}
Let $H$ be a closed subgroup of a Lie group $G$ that acts  smoothly on a manifold $X$. The natural map $[X/H] \to [X/G]$ given by the associated bundle construction is representable.
\end{lemma}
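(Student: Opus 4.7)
The plan is to show that for any map $\underline{W} \to [X/G]$ from a space $W$---which, up to canonical $2$-isomorphism, corresponds (via Remark \ref{remark:howtorepresentmaps}) to a pair $(W \leftarrow P \to X)$ with $P\to W$ a principal $G$-bundle and $P\to X$ a $G$-equivariant map---the $2$-fiber product $[X/H] \times_{[X/G]} \underline{W}$ is equivalent to the stack associated to a space. My candidate is $\underline{P/H}$, where $H \subset G$ acts on $P$ by restriction of the $G$-action. The quotient $P/H$ is a smooth manifold: since $H$ is closed in $G$, the $H$-action on $P$ inherits freeness from the $G$-action and properness from the properness of the $G$-action on $P$ together with the closedness of $H$ in $G$, so in particular $P\to P/H$ is a principal $H$-bundle and $P/H \to W$ is a fiber bundle with typical fiber $G/H$.

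To exhibit the equivalence $\underline{P/H} \simeq [X/H] \times_{[X/G]} \underline{W}$, I would construct a pair of quasi-inverse functors. In one direction, a map $s: S \to P/H$ determines: (i) a map $S \to W$ by composition with the natural projection $P/H \to W$; (ii) the pullback $E := S \times_{P/H} P$, which is a principal $H$-bundle over $S$ together with the $H$-equivariant map $E \to P \to X$; and (iii) the canonical isomorphism $E \times_H G \to S \times_W P$ sending $[e,g] \mapsto (\pi_S(e), \pi_P(e) \cdot g)$, which is easily checked to be a well-defined isomorphism of principal $G$-bundles over $S$ compatible with the maps to $X$. This yields an object of the fiber product over $S$. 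In the reverse direction, from data $(S \to W; \, E \to S \text{ an } H\text{-bundle with } \phi: E \to X; \, \theta: E \times_H G \xrightarrow{\sim} S \times_W P \text{ compatible with maps to } X)$, the composite $E \hookrightarrow E \times_H G \xrightarrow{\theta} S \times_W P \to P \to P/H$ (where the first arrow is $e \mapsto [e,1]$) is $H$-invariant and hence descends to the desired map $S = E/H \to P/H$.

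The routine verifications are that the two constructions are mutually quasi-inverse up to natural $2$-isomorphism in each category, and that automorphisms of objects in the fiber product are trivial---the latter following from the freeness of the $G$-action on $P$, which ensures that $\underline{P/H}$ really is a space rather than a non-trivial stack. I do not expect a serious obstacle here: the content of the lemma is essentially the classical fact that reductions of structure group of $P$ from $G$ to $H$ are parametrized by sections of the associated bundle $P/H \to W$, and our verification merely translates this into the 2-categorical language of stacks.
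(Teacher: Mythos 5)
Your proposal is correct and follows essentially the same route as the paper: identify the fiber product with $\und{P/H}$ (the paper's $\und{E_\varphi/H}$), note it is a manifold since the restricted $H$-action on the total space of the principal $G$-bundle is free and proper, and construct the two quasi-inverse functors exactly as the paper's $K$ (pullback of $P\to P/H$ plus the canonical isomorphism $P'\times_H G\cong f^*P$) and $F$ (the $H$-invariant map built from $e\mapsto[e,1]$). The observation about sections of $P/H\to W$ classifying reductions of structure group is a nice gloss but the underlying argument is the same.
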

\begin{proof}
Let $\varphi:\und{W} \to [X/G]$ and let $(W\leftarrow E_\varphi \to
X)$ denote $\varphi(\id_W)$.  Recall that  the fiber product $\mathcal{Z}=[X/H]
\times_{[X/G]} \und{W}$ has objects given by triples $( B\leftarrow E
\to X, f:B\to W, \alpha) $ where $\alpha\in [X/G]_{B} $ is an isomorphism 
 of $G$-bundles $\alpha:E\times_H G \to f^*E_\varphi$ (compatible with the maps to $X$).  An arrow in $\mathcal{Z}$ between two such objects is an arrow  $(E\to E', B\to B')$ in $[X/H]$ such that $B\to B'$ is compatible with the maps to $W$ and that the resulting (vertical) induced maps in the diagram below commute.
$$
\xymatrix@C=1.5em@R=1.5em{
 E\times_H G \ar[r]^{\alpha} \ar[d] & f^*E_\varphi \ar[d] \\
 E'\times_H G \ar[r]^{\alpha'} & (f')^*E_\varphi \\
 }
$$

Since $E_\varphi \to W$ is a principal $G$-bundle, the $G$-action on $E_\varphi$ is free and proper; therefore, the restriction of this action to $H\subset G$ is also free and proper, whence $E_\varphi/H$ is a manifold in $\Diff$.
Define  $F:\mathcal{Z}\to \und{E_\varphi/H}$ as follows.  
Using the section $B\to (E\times_H G)/H$, which sends $b\in B$ to the $H$-orbit of $[(e,1)]$ where $e\in E$ is any element in the fiber over $b$, and composing with the induced composition 
$$
(E\times_H G)/H \stackrel{\alpha/H}{\longrightarrow} f^* E_\varphi/H \to E_\varphi/H
$$
we obtain a map $B\to E_\varphi /H$.  Hence, on objects, 
we define $F( B\leftarrow E \to X, f:B\to W, \alpha) = (B\to E_\varphi/H)$.  (The effect of $F$ on arrows is the natural one.) 

To show that $F$ is an equivalence, we next define a morphism $K:\und{E_\varphi/H} \to \mathcal{Z}$ as follows. Given a $B\to E_\varphi/H$, let $P$ denote the pullback $H$-bundle 
$$
\xymatrix@C=1.5em@R=1.5em{
 P \ar[r] \ar[d] & E_\varphi \ar[d] \\
 B \ar[r] & E_\varphi/H \\
 }
$$
let $f$ denote the composition $f:B\to E_\varphi/H \to W$.  Since the map $P\times_H G \to E_\varphi$ given by $[(b,e,g)] \mapsto e\cdot g$ covers $f$, there is a unique isomorphism $P\times_H G \to f^*E_\varphi$ which we denote by $\alpha$.  Hence, on objects,  we define $K(B\to E_\varphi/H)$ to be the triple
$(B\leftarrow P \to E_\varphi \to X, f:B\to W, \alpha)$.  (The effect of $K$ on an arrow $B\to B'$ in $\und{E_\varphi/H}$ is hence determined.)

It is straightforward to verify that $F\circ K$ is the identity.  To realize the natural transformation between $K\circ F$ and the identity, we simply note that pulling back an $H$-bundle via a composition yields a canonical bundle isomorphism, so that the first factor in the triple for  $K\circ F( B\leftarrow E \to X, f:B\to W, \alpha)$ is thus canonically isomorphic to $ B\leftarrow E \to X$, which results in the desired natural transformation. 
\end{proof}

\begin{proposition}\label{prop:covering}
Suppose a Lie group $G$ acts on a smooth manifold $X$ and let $G_0$ denote the identity component of $G$.  The natural map $p:[X/G_0] \to [X/G]$ is a covering projection.  Moreover, if $X$ is simply connected, $p$ is the universal covering projection.
\end{proposition}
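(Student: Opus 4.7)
The plan is to leverage Lemma \ref{lemma:associatedbundle} to reduce everything to a statement about manifolds, then identify the resulting representative as a principal bundle with discrete structure group. First, since $G_0$ is a closed (normal) subgroup of $G$, Lemma \ref{lemma:associatedbundle} applies with $H = G_0$ to show that the natural map $p: [X/G_0] \to [X/G]$ is representable. Given any map $\varphi: \underline{W} \to [X/G]$ with corresponding principal $G$-bundle $E_\varphi \to W$, the fiber product $[X/G_0] \times_{[X/G]} \underline{W}$ is equivalent to $\underline{E_\varphi/G_0}$, and a representative of $p$ is the natural map $E_\varphi/G_0 \to W$.

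Next I would argue that this representative is a covering projection. Since $G_0$ is a normal subgroup of $G$, the quotient $\Gamma = G/G_0$ is a Lie group acting on $E_\varphi/G_0$, and the orbit space is identified with $E_\varphi/G = W$. Moreover, because $G_0$ is the identity component, $\Gamma$ is a discrete group. The projection $E_\varphi/G_0 \to W$ is then a principal $\Gamma$-bundle (locally modeled on trivializations inherited from $E_\varphi \to W$), and since $\Gamma$ is discrete such a bundle is exactly a covering projection. This establishes the first assertion. The main subtlety here is verifying that the $\Gamma$-bundle structure on $E_\varphi/G_0 \to W$ is locally trivial in the smooth sense; this follows from the fact that $E_\varphi \to W$ is a principal $G$-bundle together with the short exact sequence $1 \to G_0 \to G \to \Gamma \to 1$, using that locally $E_\varphi \cong W \times G$ implies locally $E_\varphi/G_0 \cong W \times \Gamma$.

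For the second statement, suppose $X$ is simply connected. Since $G_0$ is, by definition, connected, Corollary \ref{cor:orb-pi-one when X simply connected} applied to the smooth action of $G_0$ on $X$ yields that $\pi_1([X/G_0], \tilde{x})$ is trivial for any choice of base point $\tilde{x}$. By the definition of universal covering projection adopted just before the statement (a covering projection whose total stack has trivial fundamental group), this identifies $p: [X/G_0] \to [X/G]$ as the universal covering projection, completing the proof.
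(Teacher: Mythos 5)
Your proof is correct and follows essentially the same route as the paper: representability and the representative $E_\varphi/G_0 \to W$ come from Lemma \ref{lemma:associatedbundle} with $H=G_0$, this is a covering projection because $G/G_0$ is discrete, and the universal cover claim follows from Corollary \ref{cor:orb-pi-one when X simply connected} applied to the connected group $G_0$. The extra detail you supply on local triviality of the $G/G_0$-bundle is a point the paper leaves implicit, but it is not a different argument.
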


\begin{proof}
  From Lemma \ref{lemma:associatedbundle}, the natural map $p$ is
  representable, and the proof of the Lemma shows that given $\und{W}
  \to [X/G]$, that the induced $G/G_0$-bundle $E_\varphi/G_0 \to W$ (a
  covering projection) is a representative for $p$. If $X$ is simply
  connected, by Corollary \ref{cor:orb-pi-one when X simply
    connected}, $p$ is the universal covering projection.
\end{proof}


\begin{remark} 
The above proposition identifies the universal cover of the quotient
stack $[X/G]$ in the setting when $X$ is simply connected. 
In Proposition \ref{prop:nonsimplyconnectedcase} below, we identify
the universal cover of $[X/G]$ when $X$ is \emph{not} simply
  connected. 
\end{remark}

\subsection{Characterizations of global quotients among quotient stacks of simply connected manifolds $X$}

We now state the main result of this section, which in a sense also summarizes the previous subsections.
 Theorem \ref{thm:TFAE}  below characterizes 
discrete (resp. global) quotients among quotient stacks of simply connected manifolds.  (The reader may wish to compare with  \cite[Theorem 18.24]{Noohi:2005}, which discusses a more general setting.)

\begin{theorem} \label{thm:TFAE}
Let $X$ be a simply connected manifold, equipped with a smooth proper action of a Lie group $G$.  Let $G_0\subset G$ denote the connected component of the identity, and $I_x$ the inertia group of $x \in [X/G]$. The following statements are equivalent.
\begin{enumerate}
\item $[X/G]$ is equivalent to a discrete quotient.
\item $G_0$ acts freely on $X$.
\item $\omega_x:I_x \to \pi_1([X/G],x)$ is injective for all $x$ in $[X/G]$.
\item The universal cover of $[X/G]$ is equivalent to a smooth manifold.
\end{enumerate}
\end{theorem}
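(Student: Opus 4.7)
The plan is to close the cycle of implications by verifying that each of conditions (\ref{item:quotient}), (\ref{item:inertia}), and (\ref{item:cover}) is individually equivalent to (\ref{item:acts freely}); this assembles the main observations of the preceding subsections into a single argument. First, the equivalence $(\ref{item:acts freely})\Leftrightarrow(\ref{item:inertia})$ is precisely Proposition~\ref{proposition:separated stabilizers}: the forward direction follows from Proposition~\ref{proposition:quotient homomorphism}, which factors $\omega_x$ through $\pi_1([X/G],x)\to G/G_0$, while the converse uses the exact sequence~\eqref{eq:pi1 exact sequence} together with the hypothesis that $X$ is simply connected, so that $\ker\omega_x\cong\Stab(p)\cap G_0$.

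Next, $(\ref{item:acts freely})\Leftrightarrow(\ref{item:cover})$ is essentially Proposition~\ref{prop:covering}: since $X$ is simply connected, the universal covering projection of $[X/G]$ is $p:[X/G_0]\to[X/G]$, and the universal cover $[X/G_0]$ is equivalent to a smooth manifold if and only if each of its inertia groups $\Stab_{G_0}(p)$ is trivial---equivalently, $G_0$ acts freely on $X$ (the $G_0$-action is automatically proper as the restriction of a proper $G$-action). The implication $(\ref{item:acts freely})\Rightarrow(\ref{item:quotient})$ is then Corollary~\ref{cor:quotient in stages}, which exhibits $[X/G]\simeq[(X/G_0)/(G/G_0)]$ as a discrete quotient whenever $G_0$ acts freely.

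The cycle will be completed by $(\ref{item:quotient})\Rightarrow(\ref{item:inertia})$, which is the subtlest step. Suppose $[X/G]\simeq[Y/\Gamma]$ with $\Gamma$ a discrete group acting on a smooth manifold $Y$. An equivalence of stacks preserves inertia groups, fundamental groups, and the inertia homomorphism $\omega$ up to natural isomorphism, so it suffices to verify that $\omega_y:I_y\to\pi_1([Y/\Gamma],y)$ is injective for every $y\in[Y/\Gamma]$. But $\Gamma_0=\{e\}$ trivially acts freely on $Y$, and the forward direction of Proposition~\ref{proposition:separated stabilizers}---which does \emph{not} require any simple-connectedness hypothesis on the underlying manifold---then forces $\omega_y$ to be injective for all $y$. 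The main obstacle is precisely recognizing that this forward direction of Proposition~\ref{proposition:separated stabilizers} applies on the $[Y/\Gamma]$ side even though $Y$ is not assumed to be simply connected; once this observation is in hand, the equivalence of stacks transports injectivity back to $[X/G]$, closing the chain.
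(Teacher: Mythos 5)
Your proposal is correct, and the closing of the cycle is genuinely different from the paper's. The paper handles the implication out of (\ref{item:quotient}) by proving (\ref{item:quotient})~$\Rightarrow$~(\ref{item:acts freely}) directly and geometrically: it represents the equivalence $[X/G]\simeq[Z/\Lambda]$ by a principal bi-bundle $P$, uses the simple connectedness of $X$ to trivialize the $\Lambda$-bundle $P\to X$ as $X\times\Lambda$, and then reads off freeness of the $G_0$-action from the freeness of the $G$-action on $P$. You instead prove (\ref{item:quotient})~$\Rightarrow$~(\ref{item:inertia}) by transporting the injectivity of the inertia homomorphism across the equivalence, observing that for a discrete group $\Gamma$ the composition $\Stab(q)\cong I_y\to\pi_1([Y/\Gamma],y)\to\Gamma/\Gamma_0=\Gamma$ is just the inclusion of the stabilizer (Proposition~\ref{proposition:quotient homomorphism}), so $\omega_y$ is injective with no hypothesis on $Y$. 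This is valid, but it leans on two points the paper does not spell out: (i) that an equivalence of stacks identifies inertia groups, fundamental groups, and the inertia homomorphisms compatibly --- standard (and available in \cite{Noohi:2005}) but not proved here, whereas the paper's bi-bundle argument is self-contained; and (ii) a harmless reduction in case $Y$ is disconnected (either pass to a component $Y_1$ and its stabilizing subgroup $\Gamma_1$, or note that Proposition~\ref{proposition:quotient homomorphism} does not actually need connectedness, so you can bypass the exact sequence~\eqref{eq:pi1 exact sequence} entirely). What your route buys is a cleaner accounting of where simple connectedness is used: it enters only in identifying $[X/G_0]$ as the universal cover and in the converse direction of Proposition~\ref{proposition:separated stabilizers}, rather than also in the trivialization $P\cong X\times\Lambda$. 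The remaining implications --- (\ref{item:acts freely})~$\Leftrightarrow$~(\ref{item:inertia}), (\ref{item:acts freely})~$\Leftrightarrow$~(\ref{item:cover}), and (\ref{item:acts freely})~$\Rightarrow$~(\ref{item:quotient}) --- are handled exactly as in the paper.
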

\begin{proof}
Most implications follow directly from work in previous sections:
 (2) $\Rightarrow$ (1) is Corollary~\ref{cor:quotient in stages};
(3) $\Leftrightarrow$ (2) follows from Proposition~\ref{proposition:separated stabilizers}.  
By Proposition \ref{prop:covering}, the universal cover of $[X/G]$ is $[X/G_0]$, which verifies  (4)~$\Rightarrow$~(2).  Conversely, if $G_0$ acts freely on $X$, the principal $G_0$-bundle $X \to X/G_0$ may be viewed as a bi-bundle equivalence $[X/G_0] \cong \underline{X/G_0}$, and hence (2) $\Rightarrow$ (4).

It remains to show
(1) $\Rightarrow$ (2).  To that end, suppose given a principal bi-bundle representing an equivalence $[X/G] \cong [Z/\Lambda]$ where $\Lambda$ is discrete.  Recall that this yields a $G\times \Lambda$-space $P$ that is simultaneously a $G$-bundle $P\to Z$ (with $\Lambda$-equivariant projection) and a $\Lambda$-bundle $P\to X$ (with $G$-equivariant projection).   Since $X$ is simply connected, we have $P\cong X\times \Lambda$. And since  the $G$ and $\Lambda$-actions commute, the $G$-action on $X\times \Lambda$ may be written
$$
g\cdot (x,\lambda) = (g\cdot x, \phi(g,x)\lambda)
$$
where in the first factor $\cdot$ signifies the original $G$-action on $X$. Since $\Lambda$ is discrete, $\phi$ only depends on the component of $g$, yielding a homomorphism $\varphi:G/G_0 \to \Lambda$ with $\varphi(gG_0) = \phi(g,x)$ for any $x$. Finally, if $g\in G_0$ stabilizes $x$ in $X$, then $g\cdot (x,\lambda) = (x,\lambda)$ and hence $g$ is the identity element, as required.
\end{proof}

The condition that $X$ be simply connected in the above theorem is necessary, as illustrated by the following example. 

\begin{example}\label{eg:wt 2 action on circle}
 Let $G=S^1$ act on $X=S^1$ with weight 2.  This action has a global stabilizer $\Z_2=\{\pm 1\} \subset G_0=G$.  Nevertheless, we may readily verify that $[X/G]\cong B \Z_2 = [\star / \Z_2]$.  Explicitly, consider the following functors $F$ and $K$. On objects, let $F:B\Z_2 \to [X/G]$ be defined by taking associated bundle 
 $$F(E\to B) = (B \leftarrow E\times_{\Z_2} S^1 \stackrel{f}{\longrightarrow} X)$$ with $f([(e,z)])=z^2$, and let $K:[X/G] \to B\Z_2$ be given   by $K(B\leftarrow P\stackrel{f}{\longrightarrow} S^1) = (f^{-1}(1) \to B)$. (The effects of $F$ and $K$ on arrows are the natural ones.)   Alternatively, see Example \ref{eg:wt 2 action on circle redux} and Proposition \ref{prop:nonsimplyconnectedcase} below.
\end{example}

\subsection{The universal cover of a quotient stack of non-simply connected manifold $X$}

We next work towards a statement in the spirit of the equivalence (1)~$\Leftrightarrow$~(2) of  Theorem \ref{thm:TFAE} for the case of connected manifolds $X$ that are not necessarily simply
connected.  Let $X$ and $G$ be as in Proposition \ref{prop:covering},
and let $\tilde{X}$ denote the universal cover of a smooth manifold
$X$ and consider the induced action of $\tilde{G}_0$ on $\tilde{X}$.
Let $\Lambda$ denote the image of $\pi_1(G_0,1) \to \pi_1(X,p)$
(\emph{cf}. (\ref{eq:pi1 exact sequence})).  Since $\Lambda$ is a
subgroup of deck transformations, $\tilde{X}/\Lambda$ is a smooth
manifold.  Moreover, $G_0=\tilde{G}_0/\pi_1(G_0,1)$ acts on
$\tilde{X}/\Lambda$, and the covering projection $\tilde{X}/\Lambda
\to X$ is $G_0$ equivariant.

The following technical lemma helps to identify the natural map  $[(\tilde{X}/\Lambda)/G_0] \to [X/G_0]$ as a covering projection in the following Proposition.

\begin{lemma} 
  Suppose a Lie group $G$ acts smoothly on connected manifolds
  $Y$ and $X$. Assume the $G$-action on $Y$ is proper and that $f:Y\to
  X$ is a $G$-equivariant submersion. 
  Then
  the canonical map $[Y/G] \to [X/G]$ is representable.
\end{lemma}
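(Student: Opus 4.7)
The plan is to show that for any map $\varphi: \underline{W} \to [X/G]$, the fiber product $[Y/G] \times_{[X/G]} \underline{W}$ is equivalent to (the stack associated to) a smooth manifold, and to exhibit that manifold explicitly.

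Let $(W \leftarrow E_\varphi \to X)$ denote the object $\varphi(\mathrm{id}_W)$, so that $E_\varphi \to W$ is a principal $G$-bundle with a $G$-equivariant map to $X$. I would set $Z := Y \times_X E_\varphi$, using the hypothesis that $f: Y \to X$ is a submersion to ensure that this fiber product is a smooth manifold in $\mathsf{Diff}$. Equip $Z$ with the diagonal $G$-action. This action is free because $G$ acts freely on $E_\varphi$, and it is proper because the $G$-action on $E_\varphi$ is proper (this transfers to $Z$ via the closed embedding $Z \hookrightarrow Y \times E_\varphi$ and the fact that properness pulls back). Hence $Z/G$ is a smooth manifold, and the natural projection $\pi: Z/G \to E_\varphi/G = W$ is smooth.

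Next I would construct an equivalence of categories between $\mathcal{Z} := [Y/G] \times_{[X/G]} \underline{W}$ and $\underline{Z/G}$, compatible with the maps to $\underline{W}$. Recall that an object of $\mathcal{Z}$ is a triple $(B \leftarrow E \to Y, \, g: B \to W, \, \alpha)$ where $\alpha: E \to g^*E_\varphi$ is an isomorphism of principal $G$-bundles making the induced diagram to $X$ commute (i.e.\ the composition $E \to Y \xrightarrow{f} X$ equals $E \xrightarrow{\alpha} g^*E_\varphi \to E_\varphi \to X$). Define $K: \mathcal{Z} \to \underline{Z/G}$ by observing that the compatibility forces the pair $(E \to Y, \, E \to g^*E_\varphi \to E_\varphi)$ to factor through a $G$-equivariant map $E \to Y \times_X E_\varphi = Z$, which descends to a map $B \to Z/G$. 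In the opposite direction, define $F: \underline{Z/G} \to \mathcal{Z}$ by pulling back the principal $G$-bundle $Z \to Z/G$ along $B \to Z/G$; composing with $Z \to Y$ gives the $G$-equivariant map to $Y$, composing $B \to Z/G \xrightarrow{\pi} W$ gives $g$, and composing with $Z \to E_\varphi$ gives a bundle morphism over $g$, hence an isomorphism onto $g^*E_\varphi$.

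Finally I would verify that $F \circ K$ and $K \circ F$ are naturally isomorphic to the respective identities. One direction is immediate because $F \circ K$ reconstructs the original triple up to canonical bundle isomorphism from its factorization through $Z$; the other direction follows because pulling back $Z \to Z/G$ and then projecting to $Y$ and $E_\varphi$ recovers the defining data of a map $B \to Z/G$. Both natural isomorphisms commute with the projections to $\underline{W}$, so the induced map $\mathcal{Z} \to \underline{W}$ is represented by the smooth map $\pi: Z/G \to W$, and the natural map $[Y/G] \to [X/G]$ is representable.

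The main obstacle is really just the one geometric input: ensuring that $Z = Y \times_X E_\varphi$ is a smooth manifold in $\mathsf{Diff}$. This is precisely where the submersion hypothesis on $f$ is used (and where properness of the $G$-action on $Y$ is used to ensure that the induced $G$-action on $Z$ is proper, so that $Z/G$ is a manifold). The remaining construction of $F$ and $K$, together with the verification of the natural isomorphisms, is a categorical exercise essentially parallel to the proof of Lemma~\ref{lemma:associatedbundle}.
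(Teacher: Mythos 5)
Your proposal is correct and follows essentially the same route as the paper: form the fiber product $Y\times_X E_\varphi$ (a manifold by the submersion hypothesis), note the diagonal $G$-action is free and proper so the quotient is a manifold, and then exhibit the explicit pair of functors identifying $[Y/G]\times_{[X/G]}\und{W}$ with the stack of that quotient manifold. The only (immaterial) difference is that you justify properness of the diagonal action via the action on $E_\varphi$ alone, while the paper invokes properness of both factor actions; both arguments are fine.
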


\begin{proof}
  Let $\varphi:\und{W} \to [X/G]$ be given and let $( W\leftarrow
  E_\varphi \to X)$ denote $\varphi(\id_W)$.  The fiber product 
  $Y\times_{X} E_\varphi$ is a smooth $G$-invariant manifold since $Y \to X$ is a
  $G$-equivariant submersion. Moreover, the diagonal $G$-action on $Y
  \times_X E_\varphi$ is proper since the $G$-actions on $Y$ and
  $E_\varphi$ are both proper. 
  %
  Further, since the
  canonical map $Y\times_{X} E_\varphi \to E_\varphi$ is
  $G$-equivariant and $G$ acts freely on $E_\varphi$, then $G$ also acts
  freely on $Y\times_{X} E_\varphi$.  By the slice theorem for proper
  $G$-actions, we conclude that the quotient $U=(Y\times_{X} E_\varphi
  )/G$ is a smooth manifold.

We claim that $Z=[Y/G] \times_{[X/G]}
  \und{W}\cong \underline{U}$.
  Recall that the objects of $Z$ are triples $(B\leftarrow E \to Y,
  f:B\to W, \alpha)$ where $\alpha:E\to f^*E_\varphi$ is a bundle
  isomorphism compatible with the equivariant maps to $X$. An object
  in $Z$ therefore determines a canonical map $E\to Y\times_X
  E_\varphi$ that is $G$-equivariant.  This map descends to a map
  $B\to U$, which defines a functor $F:Z\to \und{U}$.

  Towards showing that $F$ is an equivalence, we next define
  $K:\und{U} \to Z$.  Given a map $B\to U$, by pulling back and
  composing with the natural projection, we obtain $B\leftarrow P \to
  Y\times_X E_\varphi \to Y$, and we may set $K(B\to U)$ to be the
  triple $(B\leftarrow P \to Y, B\to U \to W, \alpha)$ where $\alpha$
  is the canonical isomorphism given by pulling back along a
  composition.

  That $F\circ K$ is the identity is easily verified.  Similarly, the
  composition $K\circ F$ is canonically isomorphic to the identity
  functor.
\end{proof}

\begin{proposition} \label{prop:nonsimplyconnectedcase} Suppose a
  compact Lie group $G$ acts smoothly on a connected manifold $X$.
  The quotient stack $[X/G]$ is equivalent to a quotient of a discrete
  group action over $\mathsf{Diff}$ if and only if the restriction of
  the induced $G_0$-action on $\tilde{X}/\Lambda$ is free, where
  $\Lambda$ denotes the image of $\pi_1(G_0,1) \to \pi_1(X,p)$
  . Additionally, the composition of natural maps
  $[(\tilde{X}/\Lambda)/G_0] \to [X/G_0] \to [X/G]$ is a universal
  covering projection.
\end{proposition}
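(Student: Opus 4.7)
The plan is to establish the two assertions of the Proposition in tandem: first verify that the stated composition is a universal covering projection, and then deduce the equivalence via the principle that a Deligne--Mumford stack is a discrete quotient if and only if its universal cover is equivalent to a smooth manifold.

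For the covering projection claim I would treat the two factors separately. The map $[X/G_0]\to [X/G]$ is a covering projection by Proposition~\ref{prop:covering}. For $[(\tilde X/\Lambda)/G_0] \to [X/G_0]$, representability follows from the preceding Lemma since $\tilde X/\Lambda \to X$ is a $G_0$-equivariant submersion; given any $\und W \to [X/G_0]$ with associated $G_0$-bundle $E_\varphi \to W$, the representative is $\bigl((\tilde X/\Lambda)\times_X E_\varphi\bigr)/G_0 \to W$, which is a covering map since $E_\varphi\times_X (\tilde X/\Lambda)\to E_\varphi$ is pulled back from the covering $\tilde X/\Lambda \to X$. To verify universality of the composition, I would compute $\pi_1([(\tilde X/\Lambda)/G_0])$ from the exact sequence~\eqref{eq:pi1 exact sequence} applied to the $G_0$-action on $\tilde X/\Lambda$, namely
\[
\pi_1(G_0,1)\to \pi_1(\tilde X/\Lambda, \bar p)\to \pi_1\bigl([(\tilde X/\Lambda)/G_0], \bar x\bigr)\to G_0/G_0 = 1.
\]
Since $\tilde X$ is simply connected and $\Lambda$ acts freely by deck transformations, $\pi_1(\tilde X/\Lambda)\cong \Lambda$; moreover the first arrow is identified with the defining surjection $\pi_1(G_0,1)\twoheadrightarrow \Lambda$, so the middle group is trivial.

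With universality in hand, for the $(\Leftarrow)$ direction assume $G_0$ acts freely on $\tilde X/\Lambda$. Since $G_0$ is compact this action is proper, hence $M := (\tilde X/\Lambda)/G_0$ is a smooth manifold and $[(\tilde X/\Lambda)/G_0]\simeq \und M$. The standard fact that $\Lambda$ is central in $\pi_1(X,p)$ (obtained by sliding a loop $\alpha$ in $G_0$ past any loop $\beta$ in $X$ via the homotopy $(s,t)\mapsto \alpha(s)\cdot\beta(t)$) ensures $\Lambda$ is normal, so the deck group $\pi_1(X)/\Lambda$ is well-defined and acts freely on $M$ commuting with $G_0$, yielding $[X/G_0]\simeq [M/(\pi_1(X)/\Lambda)]$. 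Applying Proposition~\ref{prop:stackquotientstages} to $1\to G_0 \to G \to G/G_0\to 1$ then gives $[X/G]\simeq [X/G_0]/(G/G_0)$, so $[X/G]$ is an iterated discrete quotient of $M$, equivalent to a single discrete quotient of $M$ by the discrete group $\pi_1([X/G],x)$ (sitting in the extension $1\to \pi_1(X)/\Lambda\to \pi_1([X/G],x)\to G/G_0\to 1$ from~\eqref{eq:pi1 exact sequence}). For the $(\Rightarrow)$ direction, suppose $[X/G]\simeq [Y/\Gamma]$ with $\Gamma$ discrete and $Y$ a connected manifold. Proposition~\ref{prop:covering} (with $\Gamma_0=\{1\}$) yields that $\und Y\to [Y/\Gamma]$ is a covering projection, and precomposing with $\und{\tilde Y}\to \und Y$ exhibits a universal covering projection with manifold source $\tilde Y$. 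Uniqueness of universal covers then forces $[(\tilde X/\Lambda)/G_0]\simeq \und{\tilde Y}$, whence all inertia groups vanish and $G_0$ acts freely on $\tilde X/\Lambda$.

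The principal obstacle is in the $(\Leftarrow)$ direction: explicitly identifying $[X/G]$ as a \emph{single} discrete quotient of $M$ requires splicing the deck action of $\pi_1(X)/\Lambda$ on $M$ with a lift of the $G/G_0$-action (chosen so that the combined group is the discrete extension $\pi_1([X/G],x)$ from~\eqref{eq:pi1 exact sequence}). The centrality of $\Lambda$ in $\pi_1(X,p)$ is the geometric input that makes this splicing well-defined, and is what enables the non-simply-connected case to reduce cleanly to the simply-connected case of Theorem~\ref{thm:TFAE}.
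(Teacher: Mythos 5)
Your core argument coincides with the paper's: you identify the composite $[(\tilde X/\Lambda)/G_0]\to[X/G_0]\to[X/G]$ as a universal covering projection by using the preceding representability lemma to produce the covering representative $\bigl((\tilde X/\Lambda)\times_X E_\varphi\bigr)/G_0\to W$, and by applying the exact sequence~\eqref{eq:pi1 exact sequence} to the $G_0$-action on $\tilde X/\Lambda$ together with the surjectivity of $\pi_1(G_0,1)\to\pi_1(\tilde X/\Lambda,\bar p)\cong\Lambda$; you then reduce the equivalence to the principle that a Deligne--Mumford stack is a discrete quotient exactly when its universal cover is a manifold, which is \cite[Theorem 18.24]{Noohi:2005} --- the same citation the paper's proof rests on. Granted that principle, the statement follows at once from ``$[(\tilde X/\Lambda)/G_0]$ is equivalent to a manifold iff the proper $G_0$-action on $\tilde X/\Lambda$ is free,'' so everything in your write-up after that point is supplementary. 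It is in that supplementary material that you diverge: you attempt to exhibit $[X/G]$ explicitly as a single discrete quotient of $M=(\tilde X/\Lambda)/G_0$ by combining the deck group $\pi_1(X,p)/\Lambda$ with a lift of the $G/G_0$-action. As you yourself flag, that splicing step is not actually carried out; centrality of $\Lambda$ gives you normality and a well-defined deck group, but it does not by itself produce an action of the extension $\pi_1([X/G],x)$ on $M$ lifting the $G/G_0$-action on $[M/(\pi_1(X)/\Lambda)]$. Since you also invoke the Noohi principle, this gap is harmless to the logic, but the explicit construction cannot as written replace that citation. Your $(\Rightarrow)$ argument via uniqueness of universal covers is fine modulo the routine check that $\und{\tilde Y}\to\und Y\to[Y/\Gamma]$ is again a covering projection.
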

\begin{proof}
By  \cite[Theorem 18.24]{Noohi:2005} $[X/G]$ is equivalent (over $\Diff$) to a quotient by a discrete group action if and only if its universal cover is equivalent to a manifold.  By Proposition \ref{prop:covering}, it suffices to determine conditions under which the universal cover of $[X/G_0]$ is equivalent to a manifold, which is done next.

The representable map $p:[(\tilde{X}/\Lambda)/G_0] \to [X/G_0]$ is a covering projection.  Indeed, by the proof of the previous Lemma, given $\varphi:\und{W} \to [X/G_0]$, the natural projection $(\tilde{X}/\Lambda) \times_{X} E_\varphi \to E_\varphi$ is a $G_0$-equivariant covering projection, which induces $((\tilde{X}/\Lambda) \times_{X} E_\varphi)/G_0\to E_\varphi/G_0=W$, a covering projection that  represents $p$.

Applying the exact sequence (\ref{eq:pi1 exact sequence}) to $[(\tilde{X}/\Lambda)/G_0]$, and noting that the first map in this exact sequence is a surjection, we see that $[(\tilde{X}/\Lambda)/G_0]$ is the universal cover of $[X/G_0]$.  Finally,
$[(\tilde{X}/\Lambda)/G_0]$ is equivalent to a manifold if and only if (the compact group) $G_0$ acts freely on $\tilde{X}/\Lambda$.
\end{proof}

%

\begin{example}[Example \ref{eg:wt 2 action on circle}, revisited] \label{eg:wt 2 action on circle redux} If $G=S^1=\R/\Z$ acts on $X=S^1=\R/Z$ with weight 2, then $\Lambda = 2\Z \subset \Z = \pi_1(X,1)$ and the induced $G$-action on $\tilde{X}/\Lambda =\R/ \Lambda$ may be written $e^{2\pi i \theta} \cdot e^{\pi i t} = e^{\pi i(t+2\theta)}$, which is free (and transitive).  Therefore, as in the proof of the previous proposition, the universal cover of $[X/G]$ is $[(\R/\Lambda)/G]=\star$ and $[X/G] \cong [\star/\Z_2]$.
\end{example}

\section{Toric DM stacks} \label{sec:toricDMstacks}

We now apply the ideas of the previous section to toric Deligne-Mumford stacks arising from the combinatorial data of  \emph{stacky fans} \cite{BCS05} and \emph{stacky polytopes} \cite{Sakai2010}.  As we shall review below, these stacks arise as quotients $[X/G]$ of a simply connected spaces $X$; therefore, we may apply Theorem \ref{thm:TFAE}.


\subsection{Stacky fans and polytopes---brief review}
Mainly to establish notation, we  briefly recall some basic definitions of the combinatorial data appearing in the above discussion. In the following we use $( - )^\dual$ to denote the functor $\mathrm{Hom}_{\Z}(-,\Z)$ or $\mathrm{Hom}_\R(-,\R)$; it should be clear from context which one is meant. We use angled brackets $\langle -,- \rangle$ to indicate a natural pairing defined by duality. Also, $-\otimes -$ signifies $-\otimes_\Z-$.

Let $\{ e_1, \ldots, e_n\}$ be the standard basis vectors in $\Z^n \subset \R^n$.

\begin{definition} \cite{BCS05} \label{definition:stacky fan}
A  \emph{stacky fan} is a triple $(N, \Sigma, \beta)$ consisting of a rank~$d$
finitely generated abelian group~$N$, a rational simplicial fan $\Sigma$ in
$N\otimes \R$ with rays $\rho_1, \ldots, \rho_n$ and a homomorphism $\beta:\Z^n
\to N$ satisfying:
\begin{enumerate}
\item \label{item:rays span} the rays $\rho_1, \ldots, \rho_n$ span $N\otimes \R$, and
\item \label{item:beta surjects onto rays} for $1\leq j \leq n$, $\beta(e_j)\otimes 1$ is on the ray $\rho_j$.
\end{enumerate}
\end{definition}

Given a polytope $\Delta \subseteq \R^d$, recall that the
corresponding fan $\Sigma = \Sigma(\Delta)$ is obtained by setting the one dimensional cones $\Sigma^{(1)}$ to be the positive rays spanned by the inward-pointing normals to the facets of
$\Delta$; a subset $\sigma$ of these rays is a cone in $\Sigma$
precisely when the corresponding facets intersect nontrivially in
$\Delta$. Observe that under this correspondence, facets intersecting in a vertex of $\Delta$ yield maximal cones (with respect to inclusion) in $\Sigma(\Delta)$.  


\begin{definition}\label{definition:stacky polytope} \cite{Sakai2010}
 A \emph{stacky polytope} is a triple $(N, \Delta, \beta)$ consisting of a
rank~$d$ finitely generated abelian group~$N$, a simple polytope
$\Delta$ in $(N\otimes  \R)^\dual$ with $n$ facets $F_1, \ldots, F_n$ and a
homomorphism $\beta:\Z^n \to N$ satisfying:
\begin{enumerate}
 \item \label{item:finite cokernel} the cokernel of $\beta$ is finite, and
 \item \label{item:beta hits normals} for $1\leq j \leq n$,  $\beta(e_j) \otimes  1$ in
$N\otimes  \R$ is an inward pointing normal to the facet $F_j$.
\end{enumerate}
\end{definition}

Condition \ref{item:beta hits normals} above implies that the polytope $\Delta$ in Definition \ref{definition:stacky polytope} is a rational polytope.  Also, from   the preceding
discussion it follows immediately that the data of a stacky polytope
$(N,\Delta, \beta)$ specifies the data of a stacky fan by the
correspondence $(N,\Delta, \beta) \mapsto (N, \Sigma(\Delta),\beta)$.
Indeed,  $\Delta$ is simple if and only if $\Sigma(\Delta)$ is simplicial.  Moreover, the fan $\Sigma(\Delta)$ is rational by condition  \ref{definition:stacky polytope} (\ref{item:beta hits normals}).  Finally, $(N,\Delta,\beta)$ satisfies 
conditions (\ref{item:finite cokernel}) and
(\ref{item:beta hits normals}) of Definition \ref{definition:stacky polytope} if and only if 
$(N,\Sigma(\Delta),\beta)$ satisfies  conditions (\ref{item:rays span}) and (\ref{item:beta surjects onto rays})
of Definition \ref{definition:stacky fan}.

The extra information encoded in a stacky polytope $(N,\Delta,\beta)$ (compared with the stacky fan $(N,\Sigma(\Delta),\beta)$) results in a symplectic structure on the associated toric DM stack.  
Given a presentation of a rational polytope $\Delta$ as the intersection of half-spaces
\begin{align} \label{equation:polytope is rational}
 \Delta = \bigcap_{i=1}^n \left\{ x \in (N \otimes  \R)^\dual \,|\,  \langle
x, \beta(e_{i})\otimes 1 \rangle  \geq -c_i \right\}
\end{align}
for some $c_i \in \R$ and where each $\beta(e_{i})\otimes 1\in
N \otimes \R$  is the inward pointing normal to the facet $F_i$, the
fan $\Sigma(\Delta)$ only retains the data of the positive ray spanned
by the normals, and not the parameters $c_i$, which encode the symplectic structure on the resulting DM stack (see \cite{Sakai2010} for details).

%

Stacky polytopes can be thought of as generalizations of Lerman and Tolman's labelled polytopes.
In its original form \cite{LT97},  a labelled polytope is a pair $(\Delta,
\{m_i\}_{i=1}^n)$ consisting of a  convex simple polytope  $\Delta$ in $(N
\otimes \R)^\dual$, where $N$ is a lattice,  with $n$ facets $F_1, \ldots, F_n$ whose relative interiors are
labelled with positive integers $m_1, \ldots, m_n$. If we denote the primitive
inward pointing normals $\nu_1\otimes 1, \ldots, \nu_n\otimes 1$, then defining
$\beta:\Z^d \to N$ by the formula   $\beta(e_i) = m_i \nu_i$ realizes
$(N,\Delta,\beta)$ as a stacky polytope.
Thus  labelled polytopes are precisely the subset of the
stacky polytopes for which the $\Z$-module $N$ is  a free
module. By results of Fantechi, Mann, and Nironi
\cite[Lemma 7.15]{FantechiMannNironi:2010} this is equivalent to the geometric
condition that the associated toric DM stack has no global
stabilizers.

\subsection{Toric DM stacks from stacky fans and polytopes} 
\label{subsec:QuotientConstructionOfDMstacks}
Recall (as in \cite{BCS05}) that given a stacky fan
$(N,\Sigma,\beta)$, the corresponding DM stack may be constructed as a quotient stack $[Z_\Sigma/G]$ as follows.   As with classical toric varieties, the fan $\Sigma$ determines an ideal $J_\Sigma$ generated by the monomials $\prod_{\rho_i \not\subset \sigma} z_i \in \C[z_1, \ldots, z_n]$ corresponding to the cones $\sigma$ in $\Sigma$. Let $Z_\Sigma$ denote the complement $\C^n \setminus V(J_\Sigma)$ of the vanishing locus of $J_\Sigma$.  
Note that $Z_\Sigma$ is the complement of a union of coordinate subspaces of complex codimension at least 2; therefore, $Z_\Sigma$ is simply connected.
Next, we recall a certain group action on $Z_\Sigma$.

Choose a free
resolution
$$
0\to \Z^r \stackrel{Q}{\longrightarrow} \Z^{d+r} \to N \to 0
$$  
of the $\Z$-module $N$, and let $B:\Z^n \to \Z^{d+r}$ be a lift of
$\beta$.  With these choices, define the \emph{dual group} $\DG(\beta) = (\Z^{n+r})^\dual/ \im [B\,
Q] ^\dual$ where $[B\,Q]:\Z^{n+r}=\Z^{n}\oplus \Z^r \to \Z^{d+r}$
denotes the map whose restrictions to the first and second summands
are $B$ and $Q$, respectively.  Let $\beta^\vee: (\Z^n)^\dual \to
\DG(\beta)$ be the composition of the inclusion $(\Z^n)^\dual \to
(\Z^{n+r})^\dual$ (into the first $n$ coordinates) and the quotient
map $(\Z^{n+r})^\dual \to \DG(\beta)$. Applying the functor
$\Hom_\Z(-,\C^*)$ to $\beta^\vee$ yields a homomorphism
$G:=\Hom_\Z(\DG(\beta),\C^*) \to (\C^*)^n$, which defines a $G-$action
on $\C^n$, which leaves $Z_\Sigma \subset \C^n$ invariant.  Define $\mathcal{X}(N,\Sigma,\beta)=[Z_\Sigma/G]$.  By Proposition 3.2 in \cite{BCS05}, $\mathcal{X}(N,\Sigma,\beta)$ is a  DM stack.
\medskip

The above construction was adapted to stacky polytopes by Sakai in \cite{Sakai2010}.   As the reader may verify, the DM stack $\mathcal{X}(N,\Delta,\beta)$ obtained from a stacky polytope is a quotient stack $[Z_\Delta/H]$ where $Z_\Delta$ is a retract of $Z_\Sigma$ (cf.  \cite[Lemma 27]{Sakai2010}) equipped with an action of the compact abelian Lie group  $H=\Hom_\Z(\DG(\beta),S^1)$.  Similar to the discussion in the preceding paragraph, $H$ acts on $\C^n$ and the invariant subset $Z_\Delta$ is a certain level set $\mu^{-1}(c)$ of the moment map $\mu:\C^n \to \mathfrak{h}^\dual$ for this $H$-action (where $\mathfrak{h}$ denotes the Lie algebra of $H$).  In particular, the regular value $c$ is determined by the constants $c_1, \ldots, c_n$ appearing in (\ref{equation:polytope is rational}) (see \cite[Lemma 16]{Sakai2010}).  

 \bigskip
 
 \subsection{The fundamental group and inertia homomorphism of a toric DM stack associated to a stacky fan} \label{subsec:toricinertia}

By Corollary \ref{cor:orb-pi-one when X simply connected}, the fundamental group of a toric DM stack $\mathcal{X}(N,\Sigma,\beta)=[Z_\Sigma/G]$ associated to a stacky fan $(N,\Sigma,\beta)$ is $\pi_1(\mathcal{X}(N,\Sigma,\beta),z) \cong G/G_0$, where $G_0$ is the connected component of the identity element.  Using Proposition  \ref{proposition:quotient homomorphism}, we compute the inertia homomorphisms $\omega_z: G_z \to \pi_1(\mathcal{X}(N,\Sigma,\beta),z)\cong G/G_0$, for the various isotropy groups $G_z$ that arise.

In \cite{GHJK:2011}, both the isotropy groups $G_z$ and the quotient $G/G_0$ are described in terms of the stacky fan data, which we summarize next.
The isotropy group of a point in $\mathcal{X}(N,\Sigma,\beta)$ arises as the stabilizer $\Stab(z) \subset G$ of $z\in Z_\Sigma \subset \C^n$.  These stabilizers depend only on the cone $\sigma$ in $\Sigma$ satisfying $\{i\, : \, z_i=0 \}= \{i \, : \, \rho_i \subset \sigma \}$; namely, for such a cone $\sigma$, the corresponding isotropy group $\Gamma_\sigma$ is the kernel of the composition
$$G \stackrel{(\beta^\vee)^*}{\longrightarrow} (\C^*)^n \longrightarrow (\C^*)^{|J_\sigma|},$$
where $J_\sigma= \{j \, : \, \rho_j \not\subset \sigma \}$.  Hence we shall write the inertia homomorphisms  as $\omega_\sigma:\Gamma_\sigma \to \pi_1(\mathcal{X}(N,\Sigma,\beta))$.

 As shown in  \cite{GHJK:2011}, we may identify $\Gamma_\sigma$ with $\Tor(N/N_\sigma)$, the torsion submodule of the quotient $N/N_\sigma$, where $N_\sigma = \mathrm{span}\{ \beta(\epsilon_i) \, : \, \rho_i \subset \sigma\}$.  Moreover, the inclusion $\Gamma_\sigma \to G$ may be modelled by an explicit homomorphism $\gamma_\sigma :  \Tor(N/N_\sigma) \to G$ constructed in \cite{GHJK:2011}.  Additionally, the quotient $G \to G/G_0$, which is obtained by applying $\Hom(-,\C^*)$ to the inclusion of the torsion submodule $\Tor(\DG(\beta) \hookrightarrow \DG(\beta)$,  may also be modelled by an explicit isomorphism $\Hom(\Tor(\DG(\beta)),\C^*) \stackrel{\cong}{\longrightarrow} \coker \beta$.   (See \cite{GHJK:2011} for details.)

 It is then straightforward to verify that the diagram 
 $$
 \xymatrix{
\Tor(N/N_\sigma) \ar@{^{(}->}[r] \ar[d]_{\cong} \ar[dr]^{\gamma_\sigma} & N/N_\sigma \ar[r] & N/\im(\beta)  \\
\Gamma_\sigma \ar@{^{(}->}[r] & \Hom(\DG(\beta),\C^*) \ar[r] & \Hom(\Tor(\DG(\beta)),\C^*) \ar[u]_{\cong}
 }
 $$
 commutes; therefore, the inertia homomorphism may be identified with the composition in the top row, which proves the following.

\begin{proposition} \label{prop:inertia homomorphism for DM stacks}
Let $(N,\Sigma,\beta)$ be a stacky fan and let $\sigma$ be a cone in $\Sigma$. Using the identifications above, the inertia homomorphisms $\omega_\sigma:\Gamma_\sigma \to \pi_1(\mathcal{X}(N,\Sigma,\beta),z)$ may be identified with the composition 
$$
\omega_\sigma:\Tor(N/N_\sigma) \hookrightarrow N/N_\sigma \to \coker \beta.
$$
\end{proposition}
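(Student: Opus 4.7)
The plan is to assemble the proposition from two inputs: the general description of $\omega_x$ for quotient stacks established earlier in the paper, and the explicit combinatorial descriptions of $\Gamma_\sigma$, $G/G_0$, and the inclusion $\Gamma_\sigma \hookrightarrow G$ that the authors have recorded from \cite{GHJK:2011}. First I would note that $Z_\Sigma$ is the complement of a union of coordinate subspaces of complex codimension $\geq 2$, hence is simply connected, so Corollary \ref{cor:orb-pi-one when X simply connected} gives $\pi_1(\mathcal{X}(N,\Sigma,\beta),z) \cong G/G_0$. Proposition \ref{proposition:quotient homomorphism} then says that under this isomorphism the inertia homomorphism $\omega_\sigma$ is nothing other than the composition
\[
\Gamma_\sigma \hookrightarrow G \twoheadrightarrow G/G_0.
\]
Thus the proposition is reduced to a purely combinatorial statement about this composition.

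Next I would import the three identifications from \cite{GHJK:2011} listed just before the proposition: the isomorphism $\Gamma_\sigma \cong \Tor(N/N_\sigma)$, realized concretely by the homomorphism $\gamma_\sigma : \Tor(N/N_\sigma) \to G$; the isomorphism $G/G_0 \cong \Hom(\Tor(\DG(\beta)),\C^*)$ obtained by applying $\Hom(-,\C^*)$ to the inclusion of torsion $\Tor(\DG(\beta)) \hookrightarrow \DG(\beta)$; and the Pontryagin-duality isomorphism $\Hom(\Tor(\DG(\beta)),\C^*) \cong \coker\beta$. With these in hand, the proposition is exactly the assertion that the diagram displayed just above the statement commutes.

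The main step is verifying the commutativity of this diagram, and this is the place where I expect essentially all the work to lie. The left triangle is essentially the definition of $\gamma_\sigma$ (together with the identification $\Gamma_\sigma \cong \Tor(N/N_\sigma)$), so the substantive content is the right square: that the natural quotient $N/N_\sigma \to N/\im\beta = \coker\beta$ corresponds, via the two vertical Pontryagin identifications, to the quotient $G \to G/G_0$. I would verify this by a diagram chase starting from the free resolution $0 \to \Z^r \xrightarrow{Q} \Z^{d+r} \to N \to 0$ used to define $\DG(\beta)$: tracking an element $[v] \in \Tor(N/N_\sigma)$ down through $\gamma_\sigma$ into $G = \Hom(\DG(\beta),\C^*)$ and then modding out by $G_0$ (which by construction amounts to restricting characters to $\Tor(\DG(\beta))$) should, after Pontryagin duality, recover the image of $[v]$ in $N/\im\beta$. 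The hard part here is the unpacking of the Gale/Pontryagin dualities between $\DG(\beta)$, its torsion subgroup, and the quotients $N/N_\sigma$, $\coker\beta$; in practice this is a bookkeeping exercise that uses the snake lemma applied to the defining diagram of $\DG(\beta)$ together with the naturality of $\Hom(-,\C^*)$ on finite abelian groups.

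Once the right square is checked, the outer composition along the top row is by construction $\Tor(N/N_\sigma) \hookrightarrow N/N_\sigma \to \coker\beta$, while the outer composition along the bottom row is $\Gamma_\sigma \hookrightarrow G \twoheadrightarrow G/G_0$, which we have already identified with $\omega_\sigma$. Commutativity therefore gives the desired identification of $\omega_\sigma$ and completes the proof.
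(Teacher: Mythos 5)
Your proposal is correct and follows essentially the same route as the paper: reduce via Corollary \ref{cor:orb-pi-one when X simply connected} and Proposition \ref{proposition:quotient homomorphism} to the composition $\Gamma_\sigma \hookrightarrow G \to G/G_0$, import the identifications of $\Gamma_\sigma$, $G/G_0$, and $\gamma_\sigma$ from \cite{GHJK:2011}, and check commutativity of the displayed diagram. The paper simply asserts that the diagram chase is straightforward, whereas you sketch how one would carry it out; otherwise the two arguments coincide.
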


We may apply the above Proposition to characterize global quotients among toric DM stacks in terms of their stacky fan data, giving another proof of  Corollary \ref{cor:globalquotient} below.  By Theorem \ref{thm:TFAE}, 
it follows that $\mathcal{X}(N,\Sigma,\beta)$ is a global quotient if and only if the kernels $\ker \omega_\sigma = \Tor(\im \beta /N_\sigma)$ are trivial for all cones $\sigma$, if and only if $\im \beta/N_\sigma$ is trivial for all \emph{maximal} cones $\sigma$.

\bigskip

\subsection{The universal cover of a toric DM stack associated to a stacky fan} \label{subsec:toriccover}

By Proposition \ref{prop:covering}, the universal cover of the DM stack $\mathcal{X}(N,\Sigma,\beta)$ is $[Z_\Sigma/G_0]$, where $G_0$ is the connected component of the identity element of $G=\Hom_\Z(\DG(\beta),\C^*)$.  Next we describe $G_0$ in terms of the stacky fan $(N,\Sigma,\beta)$.  As we shall see, this can be roughly described as replacing the abelian group $N$ with the image of $\beta$.

Let $N'\subset N$ denote the image of $\beta$, and let $\Sigma'$ be the fan in $N'\otimes \R$ corresponding to $\Sigma$ under the natural isomorphism $N'\otimes \R \cong N\otimes \R$.  Finally, let $\beta':\Z^n \to N'$ be $\beta$ with restricted codomain.  The following lemma is easily verified.

\begin{lemma} Let $(N,\Sigma,\beta)$ be a stacky fan, and let $(N', \Sigma', \beta')$ be defined as above.  Then  $(N', \beta', \Sigma')$  is a stacky fan.
\end{lemma}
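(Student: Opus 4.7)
The plan is to verify each clause of Definition \ref{definition:stacky fan} in turn for the triple $(N',\Sigma',\beta')$. Nothing here is deep; the content is essentially the observation that passing from $N$ to $N' = \im \beta$ does not change anything when tensored with $\R$, because condition~(\ref{item:rays span}) of Definition~\ref{definition:stacky fan} for $(N,\Sigma,\beta)$ forces $\im \beta \otimes \R$ to exhaust $N \otimes \R$.

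First I would observe that $N'$ is a finitely generated abelian group, either as a subgroup of the finitely generated group $N$ or as a quotient of $\Z^n$ by $\ker \beta$. Next, I would show that the inclusion $N' \hookrightarrow N$ induces an isomorphism $N' \otimes \R \to N \otimes \R$. Since $\R$ is flat over $\Z$, the image of $\beta \otimes 1_\R : \Z^n \otimes \R \to N \otimes \R$ is precisely $N' \otimes \R$. By condition~(\ref{item:beta surjects onto rays}) for $(N,\Sigma,\beta)$, each $\beta(e_j) \otimes 1$ lies on $\rho_j$, and by condition~(\ref{item:rays span}) the rays $\rho_1,\ldots,\rho_n$ span $N\otimes \R$. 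Hence $\im(\beta \otimes 1_\R) \supseteq \mathrm{span}_\R\{\rho_1, \ldots, \rho_n\} = N\otimes \R$, so the inclusion $N' \otimes \R \hookrightarrow N \otimes \R$ is an isomorphism, and in particular $N'$ has rank $d$.

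Under this isomorphism, $\Sigma'$ is literally the same collection of cones as $\Sigma$, so $\Sigma'$ is automatically a simplicial fan in $N'\otimes \R$. Its rays $\rho'_j$ correspond to the $\rho_j$, and rationality with respect to $N'$ follows because each ray $\rho'_j$ contains $\beta'(e_j)\otimes 1 = \beta(e_j)\otimes 1$, which is the image of an element of $N'$. Thus $\Sigma'$ is a rational simplicial fan in $N' \otimes \R$.

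Finally, I would verify conditions (\ref{item:rays span}) and (\ref{item:beta surjects onto rays}) of Definition~\ref{definition:stacky fan} for $(N',\Sigma',\beta')$. Condition~(\ref{item:rays span}) holds because $\{\rho'_j\} = \{\rho_j\}$ spans $N\otimes \R = N' \otimes \R$; condition~(\ref{item:beta surjects onto rays}) holds because $\beta'(e_j)\otimes 1 = \beta(e_j)\otimes 1 \in \rho_j = \rho'_j$. No step presents a genuine obstacle; the only point at which anything must be verified beyond unwinding the definitions is the identification $N'\otimes \R \cong N \otimes \R$, which is immediate from the two stacky fan axioms for $(N,\Sigma,\beta)$.
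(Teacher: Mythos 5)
Your verification is correct; the paper itself gives no proof, stating only that the lemma ``is easily verified,'' and your argument supplies exactly the routine check intended (the key point being that flatness of $\R$ over $\Z$ plus the spanning condition force $N'\otimes\R \hookrightarrow N\otimes\R$ to be an isomorphism). The only implicit assumption worth flagging is that each $\beta(e_j)\otimes 1$ is a \emph{nonzero} point of $\rho_j$, which is the intended reading of condition (2) of the definition and is needed for your inclusion $\im(\beta\otimes 1_\R)\supseteq \mathrm{span}_\R\{\rho_1,\ldots,\rho_n\}$.
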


\begin{proposition} \label{prop:universal cover of toric stack} Let $(N, \Sigma, \beta)$ be a stacky fan, and let $(N', \Sigma', \beta')$ be defined as above.  Then the toric DM stack $\calX(N', \Sigma',\beta')$ is the universal cover of $\calX(N,\Sigma,\beta)$.
\end{proposition}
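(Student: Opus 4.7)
The plan is to invoke Proposition \ref{prop:covering}: since $Z_\Sigma$ is the complement in $\C^n$ of a union of coordinate subspaces of complex codimension at least two, it is simply connected, so the universal cover of $\calX(N,\Sigma,\beta) = [Z_\Sigma/G]$ is $[Z_\Sigma/G_0]$, where $G_0$ is the connected component of the identity in $G$. Writing $\calX(N',\Sigma',\beta') = [Z_{\Sigma'}/G']$, it therefore suffices to establish (i) $Z_{\Sigma'} = Z_\Sigma$ and (ii) an isomorphism $G' \cong G_0$ compatible with the actions on $Z_\Sigma$. Claim (i) is immediate: the ideal $J_\Sigma$ is determined purely by which rays lie in which cones, and the correspondence $\Sigma \leftrightarrow \Sigma'$ under $N'\otimes\R\cong N\otimes\R$ preserves this data.

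For claim (ii), I would relate the dual groups via compatible free resolutions. From $0\to\Z^r \stackrel{Q}{\to}\Z^{d+r}\to N\to 0$ and the lift $B:\Z^n\to\Z^{d+r}$ of $\beta$ used to compute $\DG(\beta)$, let $M\subset\Z^{d+r}$ denote the preimage of $N'\subset N$. Condition (1) of a stacky fan forces $N/N'$ to be finite, so $M$ is free of rank $d+r$, giving a free resolution $0\to\Z^r\to M\to N'\to 0$ of $N'$. The lift $B$ factors uniquely through $M$ as $B = \iota \circ B'$, with $B'$ lifting $\beta'$; using $(Q, B')$ to compute $\DG(\beta')$, the inclusion $\iota:M\into\Z^{d+r}$ induces a natural surjection $\DG(\beta)\twoheadrightarrow \DG(\beta')$. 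Surjectivity of $\beta'$ makes $[B'\ Q]:\Z^{n+r}\to M$ surjective; dualizing the associated short exact sequence (and using that $M$ is free, so $\Ext^1(M,\Z)=0$) yields $\DG(\beta') \cong \ker([B'\ Q])^\dual$, which is free of rank $n-d$. Since $\DG(\beta)$ also has rank $n-d$, the surjection above descends to a surjection $\DG(\beta)/\Tor(\DG(\beta))\twoheadrightarrow \DG(\beta')$ between free abelian groups of equal rank, hence an isomorphism. Applying $\Hom_\Z(-,\C^*)$ converts this to the desired identification $G'\cong G_0$, and naturality in $\beta^\vee$ and $(\beta')^\vee$ ensures that the induced actions on the coordinates of $\C^n$ agree.

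The main obstacle I anticipate is the bookkeeping needed to verify that the natural embedding $G'\hookrightarrow G$ so constructed really lands in the identity component $G_0$ rather than some other subtorus of the same dimension, and that everything stays compatible with the actions on $Z_\Sigma$ (so the equivalence produced is truly an equivalence of stacks, not just an isomorphism of groups). A useful sanity check along the way is that by the description in \cite{GHJK:2011} one has $G/G_0\cong \coker\beta$; applying this to the surjective $\beta'$ forces $G'$ to be connected, in agreement with the direct computation above.
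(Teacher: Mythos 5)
Your proof follows the same route as the paper: both reduce, via Proposition \ref{prop:covering} and the simple connectivity of $Z_\Sigma$, to identifying $\calX(N',\Sigma',\beta')$ with $[Z_\Sigma/G_0]$. The only difference is that the paper cites \cite{GHJK:2011} for that identification, whereas you prove it directly --- and your argument (compatible free resolutions giving a surjection $\DG(\beta)\twoheadrightarrow\DG(\beta')$ onto a free group of the same rank $n-d$, hence $\DG(\beta)/\Tor(\DG(\beta))\cong\DG(\beta')$ and $G'\cong G_0$ compatibly with the maps to $(\C^*)^n$) is sound.
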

\begin{proof} By Proposition \ref{prop:covering}, it suffices to verify that the toric DM stack  $\calX(N', \Sigma',\beta')=[Z_\Sigma/G_0]$, where the group  $G=\Hom_\Z(\DG(\beta),\C^*)$,
which is verified in \cite{GHJK:2011}.
\end{proof}

For a stacky fan $(N,\Sigma,\beta)$, given a cone $\sigma$ in $\Sigma$, let $N_\sigma \subset N$ denote $ \mathrm{span} \{\beta(e_i) \, | \, \rho_i \subset \sigma \}$. 
The following Lemma describes well-known conditions on a stacky fan $(N,\Sigma,\beta)$ that characterize when the toric DM stack $\calX(N,\Sigma,\beta)$ is in fact a smooth (toric) manifold.  The Corollary that follows then immediately characterizes global quotients among toric DM stacks.

\begin{lemma} \label{lemma:when toric stack is smooth} Let $(N, \Sigma,\beta)$ be a stacky fan.  Then the toric DM stack $\calX(N,  \Sigma,\beta)$ is  (equivalent to) a smooth manifold if and only if $N=N_\sigma$ for all maximal cones $\sigma \in \Sigma$.
\end{lemma}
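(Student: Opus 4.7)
The plan is to reduce the statement to one about the local isotropy groups $\Gamma_\sigma$ of $\calX(N, \Sigma, \beta) = [Z_\Sigma/G]$ and then make an elementary observation about quotients of finitely generated abelian groups. Since the $G$-action on $Z_\Sigma$ is smooth and proper, the stack $[Z_\Sigma/G]$ is equivalent to a smooth manifold if and only if $G$ acts freely on $Z_\Sigma$, equivalently, if and only if $\Gamma_\sigma$ is trivial for every cone $\sigma$ in $\Sigma$. By the description recalled in Section~\ref{subsec:toricinertia}, $\Gamma_\sigma \cong \Tor(N/N_\sigma)$, so the task reduces to showing that $\Tor(N/N_\sigma) = 0$ for every $\sigma \in \Sigma$ is equivalent to the combinatorial condition $N = N_\sigma$ for every maximal cone $\sigma$.

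For the $(\Rightarrow)$ direction, suppose the stack is equivalent to a smooth manifold, so $\Tor(N/N_\sigma) = 0$ for every cone. If $\sigma$ is a maximal cone of the simplicial fan $\Sigma$, then the rays $\rho_{i_1}, \ldots, \rho_{i_d}$ of $\sigma$ span $N \otimes \R$, so $N_\sigma = \Z\langle \beta(e_{i_1}), \ldots, \beta(e_{i_d})\rangle$ has the same rank as $N$. Hence $N/N_\sigma$ is a finite abelian group, so $N/N_\sigma = \Tor(N/N_\sigma) = 0$, giving $N = N_\sigma$.

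For the $(\Leftarrow)$ direction, suppose $N = N_\sigma$ for every maximal cone $\sigma$. Fix such a maximal $\sigma$ with rays $\rho_{i_1}, \ldots, \rho_{i_d}$. Since $\beta(e_{i_1})\otimes 1, \ldots, \beta(e_{i_d})\otimes 1$ are linearly independent in $N \otimes \R$ and the subgroup they generate is all of $N$, they form a $\Z$-basis of $N$. Any cone $\tau \subseteq \sigma$ corresponds to a subset of these rays, so $N_\tau$ is generated by a subset of a $\Z$-basis of $N$. Thus $N/N_\tau$ is a free abelian group, and in particular $\Tor(N/N_\tau) = 0$. Since every cone in $\Sigma$ is a face of some maximal cone, this handles all $\tau \in \Sigma$, and by the reduction above, $\calX(N, \Sigma, \beta)$ is equivalent to a smooth manifold.

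The only real content is the rank-and-torsion observation in the last two paragraphs; the main thing to be careful about is invoking simpliciality of $\Sigma$ so that $N_\sigma$ has full rank at a maximal cone and its generators are linearly independent (and hence extend to a $\Z$-basis precisely when $N = N_\sigma$). Everything else is bookkeeping using the identification of $\Gamma_\sigma$ from \cite{GHJK:2011} recalled in Section~\ref{subsec:toricinertia}.
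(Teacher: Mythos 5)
Your argument follows the same route as the paper for the first step: both reduce the question to triviality of all the isotropy groups $\Gamma_\sigma$. The paper gets there by citing Proposition 74 of \cite{Met03} (a stack with \'etale presentation, proper diagonal, and trivial isotropy is a manifold), whereas you argue directly from properness and freeness of the $G$-action together with the fact that an equivalence of stacks preserves inertia groups; these are interchangeable. Where you genuinely diverge is in the second step: the paper simply cites Theorem 4.2 of \cite{GHJK:2011} for the combinatorial equivalence, while you supply the elementary rank-and-torsion argument starting from the identification $\Gamma_\sigma \cong \Tor(N/N_\sigma)$. Your version is more self-contained, and the $(\Leftarrow)$ direction is complete and correct: $N=N_\sigma$ for a maximal simplicial cone forces $N$ to be free with the $\beta(e_{i_j})$ as a $\Z$-basis, every cone of $\Sigma$ contained in $\sigma$ has its rays among those of $\sigma$, so every $N_\tau$ is generated by part of that basis and every $N/N_\tau$ is free.

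The one point to be careful about is in the $(\Rightarrow)$ direction, where you assert that the rays of a maximal cone span $N\otimes\R$. This is not a consequence of simpliciality, nor of Definition \ref{definition:stacky fan}, which only requires that \emph{all} the rays together span $N\otimes\R$; a cone that is maximal with respect to inclusion need not be $d$-dimensional. Without that hypothesis the ``only if'' direction of the lemma itself fails: take $N=\Z^2$, $\beta=\mathrm{id}$, and $\Sigma$ the fan with rays generated by $e_1$ and $e_2$ but no two-dimensional cone; then $Z_\Sigma=\C^2\setminus\{0\}$, the group $G$ is trivial, and the stack is a manifold, yet $N_\sigma\neq N$ for the maximal cones $\sigma=\rho_i$. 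So you are implicitly using a standing assumption that maximal cones are top-dimensional (automatic for polytopal fans, which is the setting of the symplectic applications, and presumably built into the statement of Theorem 4.2 of \cite{GHJK:2011}); you should state it explicitly rather than attribute it to simpliciality. With that assumption in place, your $(\Rightarrow)$ argument---full rank implies $N/N_\sigma$ finite, hence torsion, hence trivial when torsion-free---is fine.
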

\begin{proof} 
Recall that since $\calX(N,\Sigma,\beta)$ is Deligne-Mumford, it admits an \'etale presentation, and the diagonal map $\Delta:\calX(N,\Sigma,\beta) \to \calX(N,\Sigma,\beta) \times \calX(N,\Sigma,\beta)$ is proper therefore a closed embedding.  By Proposition 74 in \cite{Met03} it suffices to check that all isotropy groups are trivial.
This follows from Theorem 4.2 \cite{GHJK:2011}. 
\end{proof}


\begin{corollary} \label{cor:globalquotient} Let $(N, \Sigma,\beta)$ be a stacky fan.  The following are equivalent:
\begin{enumerate}
\item The toric DM stack $\calX(N, \Sigma, \beta)$ is equivalent to a global quotient  over $\Diff$. 
\item  $N' = N'_\sigma$ for all maximal cones $\sigma$ in $\Sigma$.
\end{enumerate}
\end{corollary}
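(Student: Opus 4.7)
The plan is to apply Theorem~\ref{thm:TFAE} directly to the quotient presentation $\calX(N,\Sigma,\beta) = [Z_\Sigma/G]$ from Section~\ref{subsec:QuotientConstructionOfDMstacks}, combined with the identifications of the universal cover and the smoothness criterion already proved in Lemma~\ref{lemma:when toric stack is smooth} and Proposition~\ref{prop:universal cover of toric stack}. The key point is that all the hypotheses of Theorem~\ref{thm:TFAE} are satisfied in this setting: $Z_\Sigma$ is a simply connected manifold (being the complement of a union of coordinate subspaces of complex codimension at least two), and $G = \Hom_\Z(\DG(\beta),\C^*)$ acts smoothly and properly on $Z_\Sigma$.

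First I would verify that in the toric DM setting, the notions of \emph{discrete quotient} and \emph{global quotient} coincide, so that Theorem~\ref{thm:TFAE} yields the desired statement about global quotients. Since $\DG(\beta)$ is a finitely generated abelian group, writing $\DG(\beta) \cong \Z^k \oplus T$ with $T$ finite gives $G \cong (\C^*)^k \times \Hom_\Z(T,\C^*)$, so $G_0 \cong (\C^*)^k$ and the component group $G/G_0 \cong \Hom_\Z(T,\C^*)$ is finite. Hence if $[Z_\Sigma/G]$ is equivalent to a discrete quotient, Corollary~\ref{cor:quotient in stages} (or the chain of equivalences in Theorem~\ref{thm:TFAE}) realizes it as $[Y/\Gamma]$ with $\Gamma = G/G_0$ finite, i.e. as a global quotient.

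Next I would connect the combinatorial condition $N' = N'_\sigma$ for all maximal cones $\sigma$ to condition~(\ref{item:cover}) of Theorem~\ref{thm:TFAE}. By Proposition~\ref{prop:covering}, the universal cover of $[Z_\Sigma/G]$ is $[Z_\Sigma/G_0]$, and by Proposition~\ref{prop:universal cover of toric stack} this universal cover is equivalent to the toric DM stack $\calX(N',\Sigma',\beta')$ associated to the stacky fan $(N',\Sigma',\beta')$ obtained by restricting the codomain of $\beta$ to $N' = \im\beta$. Under the canonical identification $N'\otimes\R \cong N\otimes\R$, the maximal cones of $\Sigma'$ correspond bijectively to the maximal cones of $\Sigma$, and for each such cone $\sigma$ the subgroup $N'_\sigma$ coincides with $\mathrm{span}\{\beta(e_i) : \rho_i \subset \sigma\}$ viewed inside $N'$. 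Applying Lemma~\ref{lemma:when toric stack is smooth} to $(N',\Sigma',\beta')$ then shows that $\calX(N',\Sigma',\beta')$ is equivalent to a smooth manifold if and only if $N' = N'_\sigma$ for all maximal cones $\sigma$.

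Combining these two steps with the equivalence (\ref{item:quotient})~$\Leftrightarrow$~(\ref{item:cover}) of Theorem~\ref{thm:TFAE} completes the proof. I do not expect a significant obstacle: the substantive content has already been carried out in the preceding results. The only delicate point is confirming that the quotient recovered from the discrete-quotient conclusion of Theorem~\ref{thm:TFAE} is genuinely by a \emph{finite} group in the present setting, which is handled by the structure of $\DG(\beta)$ as noted above.
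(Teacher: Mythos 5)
Your proposal is correct and follows essentially the same route as the paper, which deduces the corollary immediately from Proposition \ref{prop:universal cover of toric stack} and Lemma \ref{lemma:when toric stack is smooth} via the equivalence (\ref{item:quotient})~$\Leftrightarrow$~(\ref{item:cover}) of Theorem \ref{thm:TFAE} (the paper also sketches a second proof via the inertia homomorphisms of Proposition \ref{prop:inertia homomorphism for DM stacks}). Your explicit check that $G/G_0 \cong \Hom_\Z(\Tor(\DG(\beta)),\C^*)$ is finite, so that ``discrete quotient'' upgrades to ``global quotient,'' is a point the paper leaves implicit and is worth making.
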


\begin{remark}  Corollary \ref{cor:globalquotient} is also obtained in
  joint work of the authors with Goldin and Johanssen \cite{GHJK:2011} by working out the combinatorial condition (2) directly from the equivalent condition that the connected component of the identity $G_0\subset G=\Hom(\DG(\beta),\C^*)$ act freely on $Z_\Sigma$.
\end{remark}

\begin{remark} A similar result to Proposition \ref{prop:universal cover of toric stack} was obtained by Poddar and Sarkar for quasi-toric orbifolds, which are effective/reduced orbifolds studied using methods from toric topology  (see Theorem 3.2 in \cite{PoddarSarkar:2010}). 
\end{remark}

We may recast the above in terms of stacky polytopes.  Given a stacky polytope $(N,\Delta, \beta)$,  let $N'=\im \beta$ and  $\beta':\Z^n \to N'$ as before and let $\Delta'$ be the polytope in $(N'\otimes \R)^\dual$ described by
$$
 \Delta' = \bigcap_{i=1}^n \left\{ x \in (N' \otimes  \R)^\dual \,|\,  \langle
x, \beta'(e_{i})\otimes 1 \rangle  \geq -c_i \right\}
$$
where the numbers $c_1, \ldots, c_n$ are the same as those appearing in (\ref{equation:polytope is rational}) for the polytope $\Delta$. This ensures that the corresponding level sets $Z_\Delta$ and $Z_\Delta'$ coincide.  Equivalently, $\Delta' \subset (N'\otimes \R)^\dual$ is the polytope corresponding to $\Delta$ under the dual of the natural identification $N'\otimes \R \cong N\otimes \R$. Analogous to Proposition \ref{prop:universal cover of toric stack},  $\mathcal{X}(N',\Delta',\beta')$ is the universal cover of $\mathcal{X}(N,\Delta,\beta)$.
In addition, the natural covering $p:\mathcal{X}(N',\Delta',\beta') \to \mathcal{X}(N,\Delta,\beta)$ (as in Proposition \ref{prop:covering}) is compatible with the underlying symplectic structures. 

\begin{proposition}
Let $(N,\Delta,\beta)$ be a stacky polytope, $(N',\Delta',\beta')$ be as above, and let $p:\mathcal{X}(N',\Delta',\beta') \to \mathcal{X}(N,\Delta,\beta)$ be the universal covering projection.  If $\omega$ and $\omega'$ denote the symplectic forms on $\mathcal{X}(N,\Delta,\beta)$ and $\mathcal{X}(N',\Delta',\beta')$, respectively, then $p^*\omega =\omega'$.
\end{proposition}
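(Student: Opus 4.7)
The plan is to exploit the universal property characterizing a reduced symplectic form in terms of its pullback along the presentation $\mu^{-1}(c) \to [\mu^{-1}(c)/H]$ arising from symplectic reduction. Denoting by $\omega_{\mathbb{C}^n}$ the standard symplectic form on $\mathbb{C}^n$ and by $i\colon Z_\Delta \hookrightarrow \mathbb{C}^n$ the inclusion, the form $\omega$ on $\mathcal{X}(N,\Delta,\beta) = [Z_\Delta/H]$ is uniquely determined by $q^*\omega = i^*\omega_{\mathbb{C}^n}$, where $q\colon Z_\Delta \to [Z_\Delta/H]$ is the natural presentation; likewise $(q')^*\omega' = i^*\omega_{\mathbb{C}^n}$ for the presentation $q'\colon Z_{\Delta'} \to [Z_{\Delta'}/H']$ of $\mathcal{X}(N',\Delta',\beta')$.

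The first step is to identify $Z_{\Delta'} = Z_\Delta$ and $H' = H_0$ so that $q$ and $q'$ have a common domain and differ only in their target stack. The group $H' = \Hom_\Z(\DG(\beta'),S^1)$ is the identity component $H_0 \subset H$ by the argument recalled in Section~\ref{subsec:toriccover} (via the analogue of Proposition~\ref{prop:universal cover of toric stack} in the polytopal setting). Their Lie algebras coincide, the $H_0$-action on $\mathbb{C}^n$ is just the restriction of the $H$-action, and the moment map for the $H_0$-action is $\mu$ composed with the canonical identification $\mathfrak{h}^\vee \cong \mathfrak{h}_0^\vee$. Because the constants $c_1,\ldots,c_n$ defining $\Delta'$ were deliberately chosen equal to those of $\Delta$, the level sets coincide: $Z_{\Delta'} = \mu^{-1}(c) = Z_\Delta$.

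The second and main step is the compatibility $p \circ q' \cong q$ of the presentations with the covering projection. Since $p$ is the associated bundle construction (as in Lemma~\ref{lemma:associatedbundle}), applying $p$ to the image $q'(f) = (S \leftarrow S \times H_0 \to Z_\Delta)$ of a test map $f\colon S \to Z_\Delta$ yields $(S \leftarrow (S\times H_0) \times_{H_0} H \to Z_\Delta) \cong (S \leftarrow S \times H \to Z_\Delta)$, which is precisely $q(f)$. Pulling back $p^*\omega = \omega'$ by $q'$ therefore reduces to the identity $(p\circ q')^*\omega = q^*\omega = i^*\omega_{\mathbb{C}^n} = (q')^*\omega'$. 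As $q'$ is a surjective submersion, this implies the desired equality of 2-forms on $[Z_\Delta/H_0]$. The only potential obstacle is the bookkeeping for the moment maps and level sets, but this is immediate once one observes that the defining data of $\Delta'$ was rigged so that $Z_{\Delta'} = Z_\Delta$; no genuinely new ingredients beyond standard symplectic reduction and the associated bundle description of $p$ are required.
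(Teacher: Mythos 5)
Your proposal is correct and follows essentially the same route as the paper: both arguments identify $Z_{\Delta'}=Z_\Delta$ with the structure group of the cover being the identity component, observe that $\omega$ and $\omega'$ each pull back along their respective presentations to the restriction of the standard form on $\C^n$, and then use the canonical isomorphism $(S\times H_0)\times_{H_0}H\cong S\times H$ to get the $2$-commutativity $p\circ q'\cong q$, concluding via the fact that a form on the stack is determined by its pullback to a presentation. The only cosmetic difference is that the paper spells out the last step using the definition of a differential form on a stack as a compatible assignment of forms to objects, where you invoke the surjective-submersion property of $q'$; these are equivalent.
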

\begin{proof}
Recall that the symplectic form $\omega$ on a toric DM stack arising from a stacky polytope $(N,\Delta,\beta)$ can be identified with the differential 2-form $\xi^*\omega $ on $Z_\Delta$ where $\xi:\und{Z_\Delta} \to [Z_\Delta/G]$ is a presentation (see Proposition 2.9 in \cite{LermanMalkin2009} and Theorem 14 (and the discussion preceding it) in \cite{Sakai2010}). Furthermore, if $\xi':\und{Z_\Delta} \to [Z_\Delta/G_0]$ denotes a presentation for $\mathcal{X}(N',\Delta',\beta')$ we have that $(\xi')^*\omega' = \xi^*\omega$, as they are each the restriction of the same 2-form on $\C^n$.  Therefore, it suffices to verify that $(\xi')^*p^*\omega = \xi^*\omega$.

The natural isomorphism of $G$-bundles $(B\times G_0) \times_{G_0} G \cong B\times G$ (over any base $B$) gives rise to the 2-commutative diagram,
$$
\xymatrix@C=1.5em@R=1.5em{
 & \und{Z_\Delta} \ar[dl]_{\xi'}
 \ar[dr]^{\xi} \drtwocell<\omit>{<2.0>} & \\
 {[Z_{\Delta}/G_0]} \ar[rr]_{p} & & {[Z_\Delta/G]}
}
$$
which shows that $(\xi')^*p^*\omega = \xi^*\omega$.  
Indeed, a differential form $\omega$ on a  stack $\mathcal{Y}$ is an assignment of a differential form $\omega(y) \in \Omega^*(U)$ for every object $y$ over $U$ that is compatible with maps, in the sense that an arrow $x\to y$ over $f:V\to U$ forces $\omega(x) = f^*\omega(y)$.  It follows that $\xi^*\omega$ is a the 2-form assigned to the object $Z_\Delta \leftarrow Z_\Delta \times G \to Z_\Delta$ and that $(\xi')^*p^*\omega$ is the 2-form assigned to the object $Z_\Delta \leftarrow (Z_\Delta \times G_0)\times_{G_0} \times G \to Z_\Delta$. But the natural isomorphism covering the identity between these objects (i.e. the 2-isomorphism in the diagram above) and the compatibility condition forces these 2-forms to coincide.

\end{proof}

Notice that the polytopes $\Delta'$ and  $\Delta$ are the same up to a rescaling of the underlying lattices via the natural identification $N'\otimes \R \cong N\otimes \R$; therefore, their corresponding volumes satisfy the relation $\mathrm{vol}(\Delta') =|N/N'|\,\mathrm{vol}(\Delta)$.  Corollary \ref{cor:vol} below verifies a similar relation among the corresponding symplectic volumes.

\begin{corollary} \label{cor:vol}
Let $(N,\Delta,\beta)$ be a stacky polytope, and let $(N',\Delta',\beta')$ be as above.  The corresponding symplectic volumes satisfy $\mathrm{Vol}(\mathcal{X}(N',\Delta',\beta') )= |\coker \beta| \cdot \mathrm{Vol}(\mathcal{X}(N,\Delta,\beta))$.
\end{corollary}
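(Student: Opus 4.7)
The plan is to combine the pullback identity $p^*\omega=\omega'$ from the preceding Proposition with the fact that $p$ is a representable covering projection of Deligne-Mumford stacks whose degree is exactly $|\coker\beta|$, and then invoke the usual change-of-variables relation $\int_{\mathcal{X}'}p^*\eta = (\deg p)\int_\mathcal{X}\eta$ for top-degree forms under a representable covering.

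First, I would identify the degree of $p$. Following the argument used in the proof of Proposition~\ref{prop:universal cover of toric stack}, but for the compact-torus presentation of Section~\ref{subsec:QuotientConstructionOfDMstacks}, one has $\mathcal{X}(N',\Delta',\beta')\simeq [Z_\Delta/H_0]$, where $H_0$ is the identity component of $H=\Hom_\Z(\DG(\beta),S^1)$, and the natural map $p:[Z_\Delta/H_0]\to[Z_\Delta/H]$ is represented by the principal $H/H_0$-bundle $Z_\Delta\to Z_\Delta/H_0$-torsor associated to $E_\varphi\to W$ in Lemma~\ref{lemma:associatedbundle}. By the identifications recorded in Section~\ref{subsec:toricinertia} (with $S^1$ replacing $\C^*$), $H/H_0\cong\Hom(\Tor(\DG(\beta)),S^1)\cong\coker\beta$, so the representative is a covering of degree $|\coker\beta|$.

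Second, let $2n=\dim_\R\mathcal{X}(N,\Delta,\beta)$. The preceding Proposition gives $p^*\omega=\omega'$, and taking top exterior powers yields $(\omega')^n=p^*(\omega^n)$. Recalling that the symplectic volume of a symplectic DM stack $(\mathcal{Y},\omega_\mathcal{Y})$ is defined by $\mathrm{Vol}(\mathcal{Y})=\tfrac{1}{n!}\int_\mathcal{Y}\omega_\mathcal{Y}^n$, and applying the degree-$|\coker\beta|$ pullback formula to the top form $\eta=\omega^n/n!$, one computes
\[
\mathrm{Vol}(\mathcal{X}(N',\Delta',\beta'))
=\frac{1}{n!}\int_{\mathcal{X}'}(\omega')^n
=\frac{1}{n!}\int_{\mathcal{X}'}p^*\omega^n
=|\coker\beta|\cdot\frac{1}{n!}\int_\mathcal{X}\omega^n
=|\coker\beta|\cdot\mathrm{Vol}(\mathcal{X}(N,\Delta,\beta)),
\]
which is the desired identity.

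The only nontrivial point is justifying the integration-by-covering formula in the stacky setting, but this is a standard consequence of working with étale groupoid presentations: in such a presentation, $p$ is represented by an honest finite covering of manifolds of degree $|H/H_0|$, and the classical manifold statement gives the factor of $|\coker\beta|$. I therefore expect the identification $H/H_0\cong\coker\beta$, which converts the a priori covering-theoretic factor into the combinatorial one appearing in the statement, to be the key step, while the volume calculation itself is essentially a consequence of $p^*\omega=\omega'$ already established.
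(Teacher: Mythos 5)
Your proposal is correct and follows essentially the same route as the paper: both reduce to the classical degree formula for a finite covering of manifolds by passing to an \'etale presentation $\und{X_0}\to\mathcal{X}$, observing that the pullback $\mathcal{X}'\times_{\mathcal{X}}X_0\cong\und{W}$ gives an \'etale presentation of $\mathcal{X}'$ with $W\to X_0$ a covering of degree $|G/G_0|\cong\coker\beta$, and then invoking $p^*\omega=\omega'$. The only (immaterial) difference is your $1/n!$ normalization of the symplectic volume, which the paper omits.
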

\begin{proof}
Let $\mathcal{X}=\mathcal{X}(N,\Delta,\beta)$ and $\mathcal{X}' = \mathcal{X}(N',\Delta',\beta')$ and $p:\calX ' \to \calX$ denote the universal covering projection.
Choose an \'etale presentation $\varphi:\und{X_0} \to\calX$ with a partition of unity so that $\mathrm{Vol}(\calX) = \int_{X_0} \omega^{d}$ (see \cite{Behrend04} for details about integration on stacks).  Since $p$ is a covering projection,  the fiber product $\calX' \times_{\calX} X_0 \cong \und{W}$ for some manifold $W$ and $W\to X_0$ is a covering projection with fiber $G/G_0 \cong \coker \beta$, where $G_0$ is the connected component of $G=\Hom_\Z(\DG(\beta),S^1)$. 
 (In fact, as in Lemma  \ref{lemma:associatedbundle}, we may take $W=E_\varphi/G_0$ where $X_0 \leftarrow E_\varphi \to Z_\Delta$ is the object representing $\varphi(\mathrm{id}_{X_0})$.)  
Then we have the following 2-Cartesian diagram
$$
\xymatrix@C=2.0em@R=2.0em{
{W} \ar[d]^{\varphi'} \ar[r]^{p_0} & {X_0} \ar[d]^{\varphi} \\
{\calX'} \ar[r]^{p} & \calX
}
$$
It follows that $\varphi':\und{W}\to \calX'$ is an \'etale presentation for $\calX'$.  Moreover, we may pull back the partition of unity on $X_0$ to $W$.
By the previous proposition, 
$$
\mathrm{Vol}(\calX') = \int_{W} (\varphi')^*p^*\omega = \int_{W} (p_0)^*\varphi^*\omega = \mathrm{deg}(p_0) \cdot \int_{X_0} \varphi^*\omega = |G/G_0| \cdot \mathrm{Vol}(\calX).
$$
\end{proof}

\subsection{Examples} \label{subsec:examples}

We conclude with some examples illustrating the discussion above.

\bigskip

The following class of examples is studied in \cite{GHJK:2011}.

\begin{example}[Labelled sheared simplices]  Let $\mathbf{a}=(a_1, \ldots, a_d)$ be a primitive vector in $N= \Z^{d}$ with $a_i \geq 1$ and let $m_0, \ldots, m_d \in \Z_{>0}$. Let $\Sigma(\mathbf{a})$ be the fan in $N\otimes \R \cong \R^d$ whose rays are generated by $-\mathbf{a}$ and the standard basis vectors.  Note that $\Sigma(\mathbf{a})$ is the normal fan of a \emph{sheared simplex} $\Delta(\mathbf{a})$.
Letting $f_0, \ldots, f_d$ be the standard basis vectors for $\Z^{d+1}$, set $\beta: \Z^{d+1} \to \Z^d$ to be $\beta(f_0)=-m_0\mathbf{a}$ and $\beta(f_j) = m_j e_j$ for $j=1, \ldots, d$.  
It is straightforward to verify that $(N,\Sigma(\mathbf{a}),\beta)$ is the stacky fan associated to the labelled polytope $(\Delta(\mathbf{a}), \{m_j \}_{j=0}^d)$.  (See Figure \ref{fig:shearedsimp}  illustrating a concrete example with $\mathbf{a}=(1,2)$ and labels $m_0=m_1=1$ and $m_2=2$.)

\begin{figure}
\centering
\mbox{
\subfigure[ $\Sigma(\mathbf{a})$ in $N\otimes \R$, with $N'$ indicated by darkened dots.]{
 \def\svgwidth{0.2\textwidth}
  
\immediate\write18{inkscape -z -D –file=images/fan-quotofCP2.svg
–export-pdf=images/fan-quotofCP2.pdf –export-latex}
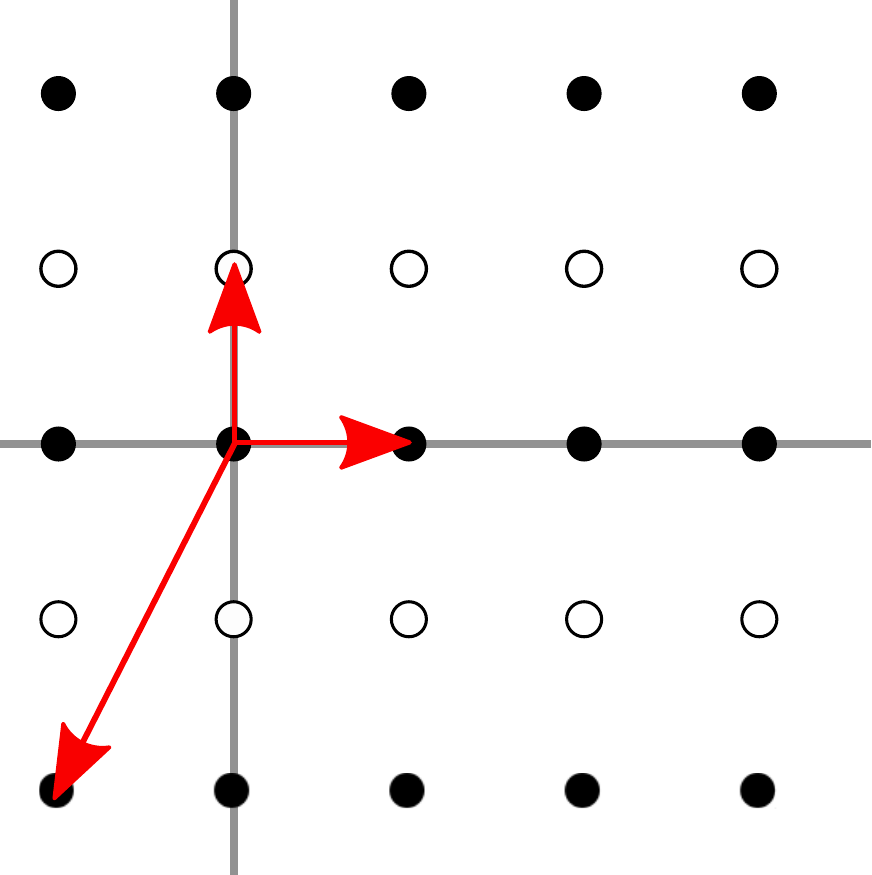
  \label{fig:fanshearedsimp}
 }
 }\quad \quad
   \subfigure[$\Delta(\mathbf{a})$ in $(N\otimes \R)^\dual$, with non-trivial labels indicated.]{
 \def\svgwidth{0.2\textwidth}
  
\immediate\write18{inkscape -z -D –file=images/Ptope-quotofCP2.svg
–export-pdf=images/Ptope-quotofCP2.pdf –export-latex}
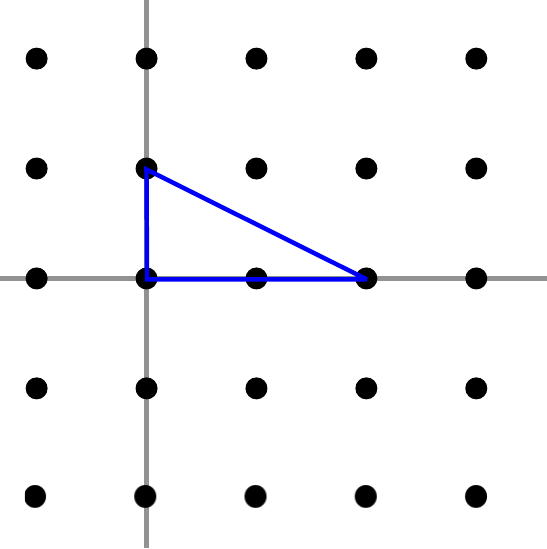
  \label{fig:ptopeshearedsimp}
  
}
\caption{A fan $\Sigma(\mathbf{a}) \subset N\otimes \R$ and its corresponding labelled polytope $
\Delta (\mathbf{a}) \subset (N\otimes \R)^\dual$.} \label{fig:shearedsimp}
\end{figure}

 In \cite{GHJK:2011}, it is shown that the toric DM stack $\mathcal{X}(N,\Sigma(\mathbf{a}),\beta)$ is a global quotient if and only if $m_j=m_0a_j$ for all $j=1, \ldots, d$.  In this case, $N'=\bigoplus_{j=1}^{d} m_j \Z \subset N$ and that under an identification $N'\cong \Z^d$ we find $\Sigma'$ is the fan in $\R^d$ whose rays are generated by $-\sum e_j$ and the standard basis vectors $e_1, \ldots, e_d$ and that $\beta':\Z^{d+1} \to N'\cong \Z^d$ may be expressed by the formulas $\beta'(f_0) = -\sum e_j$, $\beta'(f_j)=e_j$.  That is, $\mathcal{X}(N', \Sigma', \beta') = \CP^d$ so that $\mathcal{X}(N,\Sigma(\mathbf{a}),\beta)$ is a quotient of complex projective space whenever it is a global quotient. (See Figure \ref{fig:shearedsimpcover} illustrating the (stacky) fan and polytope of the universal cover of the symplectic toric DM stack whose fan and polytope appear in Figure \ref{fig:shearedsimp}.)

\begin{figure}
\centering
\mbox{
\subfigure[ $\Sigma(\mathbf{a})'$ in $N'\otimes \R\cong \R^2$ corresponding to the fan  in Fig. \thesection.\ref{fig:fanshearedsimp} .]{
 \def\svgwidth{0.2\textwidth}
  
\immediate\write18{inkscape -z -D –file=images/fan-quotofCP2-cov.svg
–export-pdf=images/fan-quotofCP2-cov.pdf –export-latex}
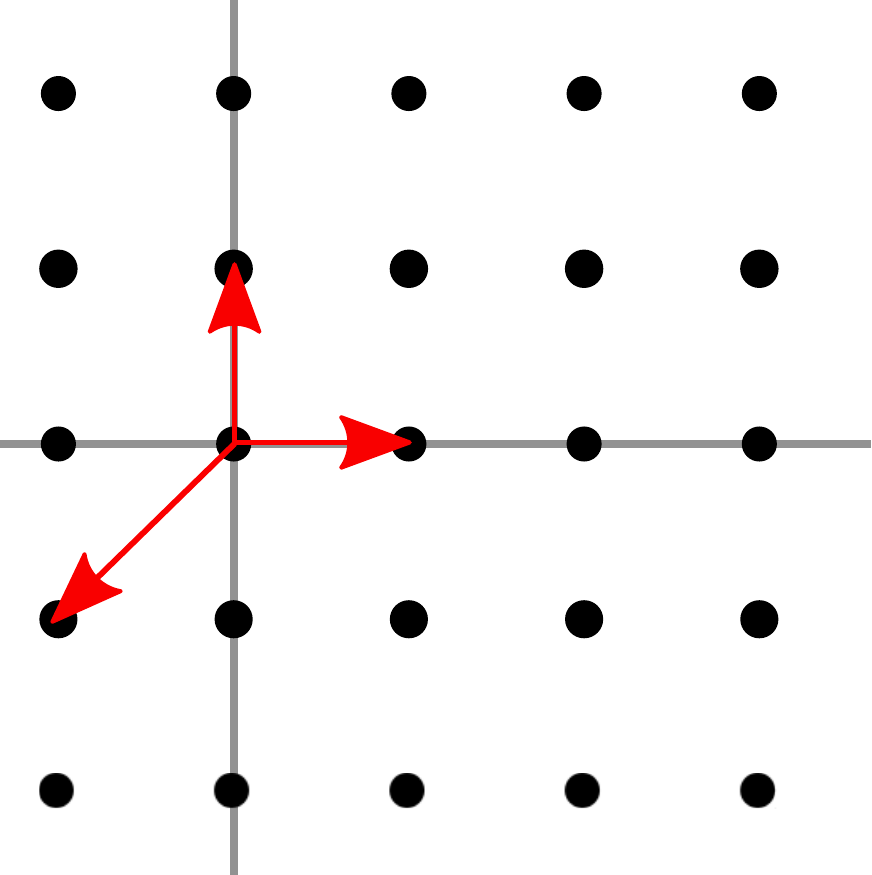
  \label{fig:fanshearedsimpcover}
 }
 }\quad \quad
   \subfigure[$\Delta(\mathbf{a})'$ in $(N'\otimes \R)^\dual \cong \R^2$ corresponding to the labelled polytope in Fig. \thesection.\ref{fig:ptopeshearedsimp}.]{
 \def\svgwidth{0.2\textwidth}
  
\immediate\write18{inkscape -z -D –file=images/Ptope-quotofCP2-cov.svg
–export-pdf=images/Ptope-quotofCP2-cov.pdf –export-latex}
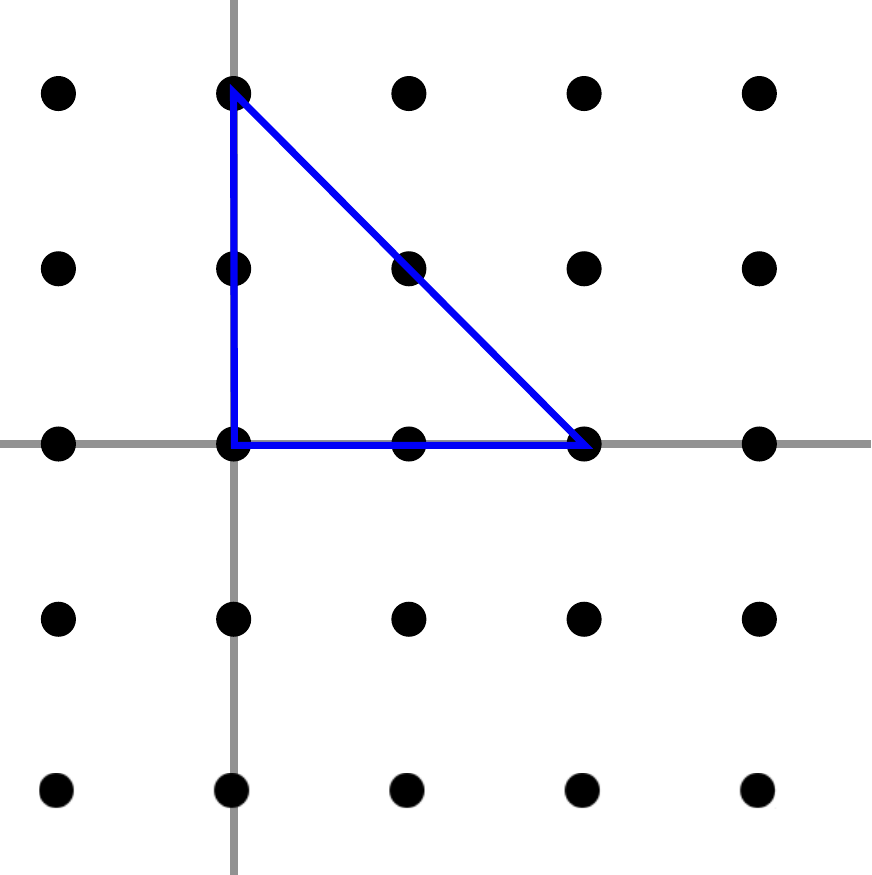
  \label{fig:ptopeshearedsimpcover}
  
}
\caption{The fan $\Sigma(\mathbf{a})'$ and corresponding labelled polytope $
\Delta (\mathbf{a})' $ of the universal cover of the symplectic toric DM stack represented  in Figure \ref{fig:shearedsimp}.  } \label{fig:shearedsimpcover}
\end{figure}

\end{example}

\begin{example}
Let $\mathbf{a}=(a_1,a_2)$ be a primitive vector in $N=\Z^2$ with $a_1,a_2\geq 1$.  Consider  the fan $\Sigma$ in $N \otimes \R \cong \R^2$ with four rays $\rho_1, \ldots, \rho_4$ generated by $-\mathbf{a}$, $e_2$, $e_1$, $-e_2$ respectively, and maximal cones $\sigma_{12}$, $\sigma_{23}$, $\sigma_{34}$, and $\sigma_{41}$, where $\sigma_{ij}$ denotes the two dimensional cone generated by $\rho_i$ and $\rho_j$.
Let $m_1, \ldots, m_4$ be positive integers and let $\beta:\Z^4 \to N$ be
$$
\beta=
\begin{bmatrix}
-m_1a_1 & 0 & m_3 & 0 \\
-m_1a_2 & m_2 & 0 & -m_4 \\
\end{bmatrix}.
$$
Note that the stacky fan $(N,\Sigma,\beta)$ corresponds to a labelled right trapezoid.  (See Figure \thesection.\ref{fig:ptopequotofHbruch} illustrating a concrete example with $\mathbf{a}=(1,2)$.)

By Corollary \ref{cor:globalquotient}, the toric DM stack $\mathcal{X}(N,\Sigma,\beta)$ is a global quotient if and only if $N'=N'_\sigma$ for all maximal cones $\sigma$.  This occurs precisely when $m_1a_1 = m_3$,  $m_2=m_4$, and $m_2|m_1a_2$.  In this case, $N' = m_3 \Z \oplus m_2 \Z \subset N$ and that under an identification $N'\cong \Z^2$ we see that $\Sigma'$ is the fan in $\R^2$ with four rays generated by $-(1,b)$, $e_2$, $e_1$, and $-e_2$, where $b=m_1a_2/m_2$.   Moreover, under this identification 
$$
\beta'=
\begin{bmatrix}
-1 & 0 & 1 & 0 \\
-b & 1 & 0 & -1 \\
\end{bmatrix};
$$
therefore, $\mathcal{X}(N,\Sigma,\beta)$ is a quotient of a Hirzebruch surface whenever it is a global quotient. (See Figure \ref{fig:quotofHbruch}  illustrating the labelled polytope of a global quotient of a Hirzebruch surface.)

\begin{figure}[ht]
\centering
\mbox{
\subfigure[A labelled polytope $\Delta(\mathbf{a})$ in $(N\otimes \R)^\dual \cong \R^2$ corresponding to a stack that is a global quotient.]{
 \def\svgwidth{0.2\textwidth}
  
\immediate\write18{inkscape -z -D –file=images/ptope-quotofHbruch.svg
–export-pdf=images/ptope-quotofHbruch.pdf –export-latex}
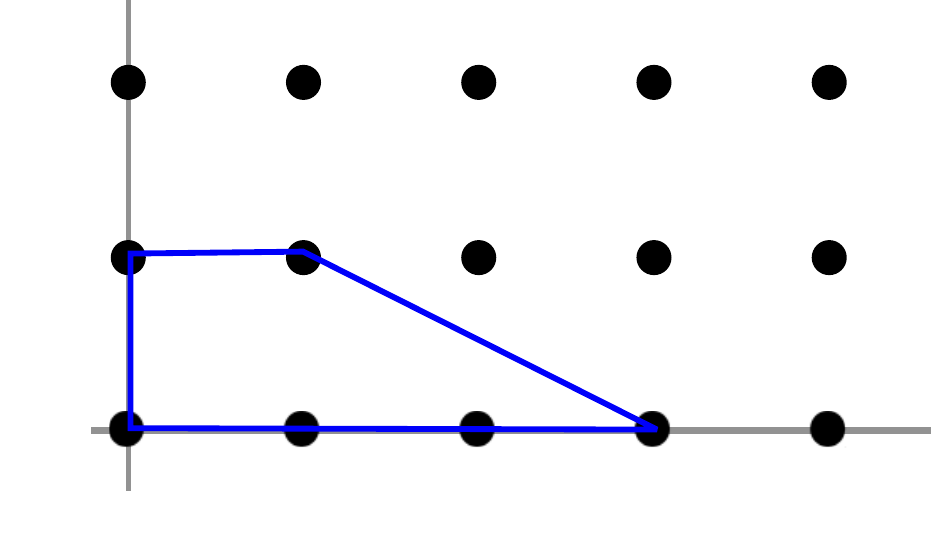
  \label{fig:ptopequotofHbruch}
 }
 }\quad \quad
   \subfigure[The polytope $\Delta(\mathbf{a})'$ in $(N'\otimes \R)^\dual \cong \R^2$  corresponding to the universal cover of the stack associated to \thesection.\ref{fig:ptopequotofHbruch}. ]{
 \def\svgwidth{0.2\textwidth}
  
\immediate\write18{inkscape -z -D –file=images/ptope-quotofHbruch-cov.svg
–export-pdf=images/ptope-quotofHbruch-cov.pdf –export-latex}
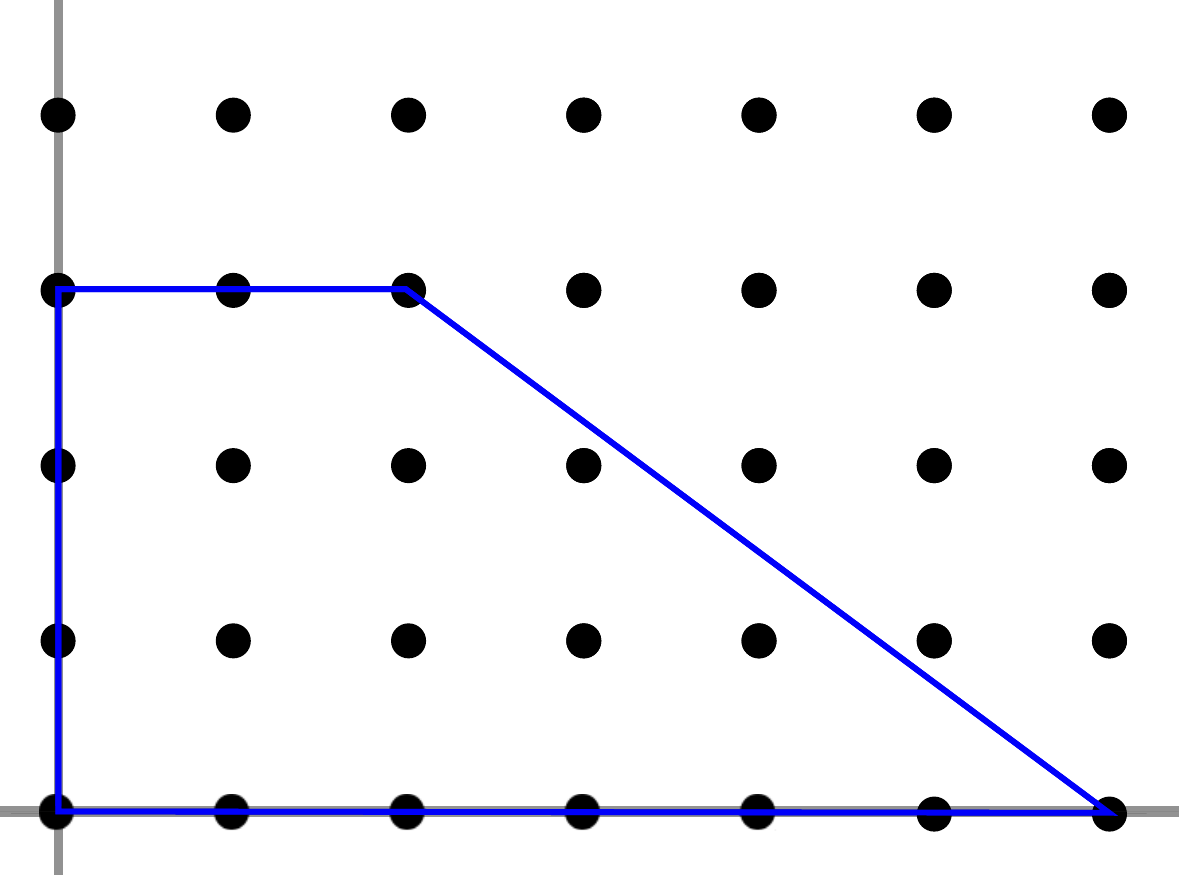
  \label{fig:ptopequotofHbruch-cov}
  
}
\caption{The labelled polytopes $\Delta(\mathbf{a})$ and $\Delta(\mathbf{a})'$ of a
of a quotient of a Hirzebruch surface and its universal cover (a Hirzebruch surface), respectively. } \label{fig:quotofHbruch}
\end{figure}

\end{example}

The remaining three examples consider toric DM stacks with non-trivial global stabilizer (due to the presence of torsion in the abelian group $N$).  The first example exhibits a global quotient with global isotropy, while the last two  illustrate how the condition in Corollary \ref{cor:globalquotient} may fail.

\begin{example}
Let $N=\Z \oplus \Z/2\Z$ and $\Sigma$ be the fan in $N\otimes \R \cong \R$ with rays $\pm 1$.  Let $\beta:\Z^2 \to N$ be given by $ (a,b)\mapsto (2a-2b,a+b \modulo 2)$. Then $N'$ is generated by $(2,1 \modulo 2)$, as is $N'_\sigma$ for each maximal cone $\sigma$.  Therefore, $\mathcal{X}(N,\Sigma,\beta)$ is a global quotient.  Under an identification $N'\cong \Z$, we see that $\beta'(a,b) = a-b$ and that $\mathcal{X}(N',\Sigma',\beta')=\CP^1$.  Moreover, a direct calculation shows that $G/G_0 \cong \coker \beta \cong \Z/4\Z$ so that $\Z/4$ acts on $\CP^1$ with  global stabilizer isomorphic to $\Z/2\Z$ (the torsion submodule of $N$), and  $\mathcal{X}(N,\Sigma,\beta)\cong[\CP^1/(\Z/4\Z)]$.
\end{example}

\begin{example}

Let $(N,\Sigma,\beta)$ be any stacky fan with  $N$ containing a non-trivial torsion subgroup and $\beta$ surjective (e.g. the stacky fan of a weighted projective space with non-trivial global stabilizer).  Then $\mathcal{X}(N,\Sigma,\beta)$ is not equivalent to a global quotient since $N' = N$ has torsion and $N'_\sigma$ is necessarily torsion free for any cone $\sigma$ in the rational simplicial fan $\Sigma$. (cf. the proof  of Theorem 3.1 in \cite{GHJK:2011}). (More generally,  if $N'\subset N$ contains non-trivial torsion, then $\mathcal{X}(N,\Sigma,\beta)$ is not equivalent to a global quotient.)

\end{example}

\begin{example}

Let $N=\Z \oplus \Z/4\Z$ and $\Sigma$ be the fan in $N\otimes \R \cong \R$ with rays $\pm 1$.  Let $\beta: \Z^2 \to N$ be given by $(a,b) \mapsto (a-2b,a+2b \modulo 4)$. Then $N'$ is torsion-free, generated by $(1,1) \in N$; however, $N'_\sigma$ is generated by $(2,2)$ for the cone $\sigma$ generated by $-1$.  By Corollary \ref{cor:globalquotient}, $\mathcal{X}(N,\Sigma,\beta)$ is not equivalent to a global quotient.

\end{example}


\def\cprime{$'$}


\begin{thebibliography}{10}

\bibitem{AdemLeidaRuan:2007}
A.~Adem, J.~Leida, and Y.~Ruan.
\newblock {\em Orbifolds and stringy topology}, volume 171 of {\em Cambridge
  Tracts in Mathematics}.
\newblock Cambridge University Press, Cambridge, 2007.

\bibitem{AdeRua03}
A.~Adem and Y.~Ruan.
\newblock Twisted orbifold {$K$}-theory.
\newblock {\em Comm. Math. Phys.}, 237(3):533--556, 2003, math.AT/0107168.

\bibitem{Behrend04}
K.~Behrend.
\newblock Cohomology of stacks.
\newblock In {\em Intersection theory and moduli}, volume~19 of {\em ICTP Lect.
  Notes}, pages 249--294. Abdus Salam International Centre for Theoretical
  Physics, Trieste, 2004.

\bibitem{BCEFFGK-stacks}
K.~Behrend, B.~Conrad, D.~Edidin, B.~Fantechi, W.~Fulton, L.~G\"ottssche, and
  A.~Kresch.
\newblock Algebraic stacks,
  \url{http://www.math.uzh.ch/index.php?pr_vo_det&key1=1287&key2=580&no_cache=%
1}.
\newblock in progress.

\bibitem{BCS05}
L.~A. Borisov, L.~Chen, and G.~G. Smith.
\newblock The orbifold {C}how ring of toric {D}eligne-{M}umford stacks.
\newblock {\em J. Amer. Math. Soc.}, 18(1):193--215 (electronic), 2005.

\bibitem{Edidin:2003}
D.~Edidin.
\newblock What is a stack?
\newblock {\em Notices Amer. Math. Soc.}, 50(4):458–--459, 2003.

\bibitem{Fantechi:2001}
B.~Fantechi.
\newblock Stacks for everybody.
\newblock In {\em European {C}ongress of {M}athematics, {V}ol. {I}
  ({B}arcelona, 2000)}, volume 201 of {\em Progr. Math.}, pages 349--359.
  Birkh\"auser, Basel, 2001.

\bibitem{FG03}
B.~Fantechi and L.~G{\"o}ttsche.
\newblock Orbifold cohomology for global quotients.
\newblock {\em Duke Math. J.}, 117(2):197--227, 2003.

\bibitem{FantechiMannNironi:2010}
B.~Fantechi, E.~Mann, and F.~Nironi.
\newblock Smooth toric {D}eligne-{M}umford stacks.
\newblock {\em J. Reine Angew. Math.}, 648:201--244, 2010.

\bibitem{GeraschenkoSatriano:2011a}
A.~Geraschenko and M.~Satriano.
\newblock Toric stacks {I}: The theory of stacky fans, July 2011,
  http://arxiv.org/abs/1107.1906.

\bibitem{GeraschenkoSatriano:2011b}
A.~Geraschenko and M.~Satriano.
\newblock Toric stacks {II}: Intrinsic characterization of toric stacks, July
  2011, http://arxiv.org/abs/1107.1907.

\bibitem{GHJK:2011}
R.~Goldin, M.~Harada, D.~Johannsen, and D.~Krepski.
\newblock On stacky fans and their isotropy.
\newblock in preparation.

\bibitem{Husemoller:1994}
D.~Husemoller.
\newblock {\em Fibre bundles}, volume~20 of {\em Graduate Texts in
  Mathematics}.
\newblock Springer-Verlag, New York, third edition, 1994.

\bibitem{Iwanari:2009b}
I.~Iwanari.
\newblock Logarithmic geometry, minimal free resolutions and toric algebraic
  stacks.
\newblock {\em Publ. Res. Inst. Math. Sci.}, 45(4):1095--1140, 2009.

\bibitem{Kaw73}
T.~Kawasaki.
\newblock Cohomology of twisted projective spaces and lens complexes.
\newblock {\em Math. Ann.}, 206:243--248, 1973.

\bibitem{Lerman:2010}
E.~Lerman.
\newblock Orbifolds as stacks?
\newblock {\em Enseign. Math. (2)}, 56(3-4):315--363, 2010.

\bibitem{LermanMalkin2009}
E.~Lerman and A.~Malkin.
\newblock Hamiltonian group actions on symplectic {D}eligne-{M}umford stacks
  and toric orbifolds.
\newblock {\em Advances in Mathematics}, 229(2):984--1000, January 2012,
  http://arxiv.org/abs/0908.0903.

\bibitem{LT97}
E.~Lerman and S.~Tolman.
\newblock Hamiltonian torus actions on symplectic orbifolds and toric
  varieties.
\newblock {\em Trans. Amer. Math. Soc.}, 349(10):4201--4230, 1997.

\bibitem{Met03}
D.~Metzler.
\newblock {Topological and Smooth Stacks}, June 2003,
  http://arxiv.org/abs/math/0306176.

\bibitem{Noohi:2005}
B.~Noohi.
\newblock Foundations of topological stacks {I},
  http://arxiv.org/abs/math/0503247.

\bibitem{Noohi:2004}
B.~Noohi.
\newblock Fundamental groups of algebraic stacks.
\newblock {\em J. Inst. Math. Jussieu}, 3(1):69--103, 2004.

\bibitem{PoddarSarkar:2010}
M.~Poddar and S.~Sarkar.
\newblock On quasitoric orbifolds.
\newblock {\em Osaka J. Math.}, 47(4):1055--1076, 2010.

\bibitem{Sakai2010}
H.~Sakai.
\newblock The symplectic {D}eligne-{M}umford stack associated to a stacky
  polytope.
\newblock {\em Results in Mathematics}, March 2012,
  http://arxiv.org/abs/1009.3547.

\bibitem{Satake:1956}
I.~Satake.
\newblock On a generalization of the notion of manifold.
\newblock {\em Proc. Nat. Acad. Sci. U.S.A.}, 42:359--363, 1956.

\end{thebibliography}
\end{document}